\documentclass[journal,twoside,web]{ieeecolor}
\usepackage{generic}
\usepackage{cite}
\usepackage{amsmath,amssymb,amsfonts}
\usepackage{mathtools}
\usepackage{algorithmic}
\usepackage{graphicx}
\usepackage{algorithm,algorithmic}
\usepackage{hyperref}
\usepackage{subcaption}
\hypersetup{hidelinks=true}
\usepackage{textcomp}
\usepackage{booktabs}
\usepackage{multirow}
\usepackage{siunitx}

\def\BibTeX{{\rm B\kern-.05em{\sc i\kern-.025em b}\kern-.08em
    T\kern-.1667em\lower.7ex\hbox{E}\kern-.125emX}}
\newenvironment{problem}{\textbf{Problem:}}{}

\newtheorem{assumption}{Assumption}

\newtheorem{remarkEnv}{Remark}
\newenvironment{remark}[1][]{\begin{remarkEnv}}{\hfill$\blacklozenge$\end{remarkEnv}}

\newtheorem{theoremEnv}{Theorem}
\newenvironment{theorem}[1][]{\begin{theoremEnv}}{\hfill$\square$\end{theoremEnv}}

\newtheorem{lemmaEnv}{Lemma}
\newenvironment{lemma}[1][]{\begin{lemmaEnv}}{\hfill$\square$\end{lemmaEnv}}

\newtheorem{propositionEnv}{Proposition}
\newenvironment{proposition}[1][]{\begin{propositionEnv}}{\hfill$\square$\end{propositionEnv}}

\newtheorem{definitionEnv}{Definition}
\newenvironment{definition}[1][]{\begin{definitionEnv}}{\hfill$\bullet$\end{definitionEnv}}

\newtheorem{corollary}{Corollary}

\DeclareMathOperator*{\argmin}{\arg\!\min}

\newcommand{\relu}{\text{ReLU}}
\newcommand{\fnom}{f}

\newcommand{\fhat}{\hat{f}}
\newcommand{\fnomapprox}{\tilde{\fnom}}

\newcommand{\Vapprox}{\widehat{V}}
\newcommand{\gVapprox}{\nabla \widehat{V}}
\newcommand{\npd}{\omega}
\newcommand{\npdapprox}{\hat{\omega}}
\newcommand{\gV}{\nabla V}

\newcommand{\Wapprox}{\widehat{W}}
\newcommand{\Wn}{v}
\newcommand{\Wnapprox}{\hat{v}}
\newcommand{\R}{\mathbb{R}}
\newcommand{\Rn}{\mathbb{R}^{n}}
\newcommand{\Rm}{\mathbb{R}^{m}}

\newcommand{\cd}[1]{\mathcal{C}^1(#1)}

\newcommand{\cdn}{\mathcal{C}^1}

\newcommand{\cn}[1]{\mathcal{C}(#1)}

\newcommand{\roa}{\mathcal{R}}

\newcommand{\roaapprox}{\widehat{\roa}}

\begin{document}
\title{Locally Stable Neural ODEs\\ with Characterized Region of Attraction}
\author{Alice Harting, Karl Henrik Johansson, Sophie Tarbouriech, Matthieu Barreau
\thanks{This work was partially supported by the Wallenberg AI, Autonomous Systems and Software Program (WASP) funded by the Knut and Alice Wallenberg Foundation. The computations were enabled by resources provided by the National Academic Infrastructure for Supercomputing in Sweden (NAISS), partially funded by the Swedish Research Council through grant agreement no. 2022-06725.}
\thanks{
A. Harting, K.H. Johansson, and M. Barreau are with the Department of Decision and Control Systems, KTH Royal Institute of Technology, Stockholm, Sweden. Emails: \{\texttt{aharting}, \texttt{kallej}, \texttt{barreau}\}\texttt{@kth.se}. S. Tarbouriech is with LAAS-CNRS, Universit\'{e} de Toulouse, CNRS, Toulouse, France. Email: \texttt{sophie.tarbouriech@laas.fr}
}}
\maketitle
\begin{abstract} We propose a class of neural ODEs that universally approximates locally exponentially stable dynamics and the region of attraction from trajectory data. The model dynamics are constrained by the gradient field of a jointly learned maximal Lyapunov function. Under this constraint, we show that exponentially stable dynamics can be approximated arbitrarily well within the region of attraction. Furthermore, the region of attraction of the constrained model is exactly characterized by the 1-sublevel set of the jointly learned Lyapunov function, and we derive conditions under which it approximates the true region of attraction arbitrarily well. We validate the approach experimentally on nonlinear systems with nonconvex regions of attraction. 
\end{abstract}

\begin{IEEEkeywords}
Stability of nonlinear systems, Neural networks, Robust control, Machine learning, Region of Attraction
\end{IEEEkeywords}

\section{Introduction}
\label{sec:introduction}

Dynamical systems are ubiquitous in natural and engineered systems \cite{aastrom2021feedback}, and the evolution of their states over time is commonly modeled by ordinary differential equations (ODEs),
\begin{equation}\label{eq:sys}
    \frac{d}{dt}{x}(t)=f(x(t)),\quad x(0)=x_0\in D,
\end{equation} on a domain $D\subset\Rn$ for some function $f:D\to\Rn$.
A common challenge in system identification is to estimate the function $f$ from measurements. Classical methods include spectral analysis \cite{pintelon2012system}, prediction-error methods \cite{ljung1999system}, and subspace identification \cite{overschee1996subspace}, among others. More recently, learning-based methods have been used to leverage highly expressive models, even with nonlinear dynamics, to fit directly to observed trajectories, a prominent example being Neural ODEs \cite{chen2018neural}. A central problem in learning a dynamical system is transferring properties of the original system, such as its equilibrium points and their stability, to the learned system. Indeed, when learning a neural model using data from a system with a stable equilibrium, there is no guarantee that the neural system inherits this property \cite{manchester_neural_2026}. 

 



To assess the stability in general systems, one can leverage Lyapunov methods \cite{khalil2002nonlinear, lyapunov1992general}. A sufficient condition is the existence of a \textit{Lyapunov function} -- an energy-like scalar-output function of the state -- that decreases along system trajectories and attains its minimum in the equilibrium. If the decay is strict in a neighborhood of the equilibrium, trajectories starting sufficiently close are attracted to it, and the equilibrium is said to be \textit{asymptotically stable}. The set of initial states that converge to the equilibrium point forms the region of attraction (ROA), and a sublevel set of the Lyapunov function can provide an inner estimate. However, finding a Lyapunov function such that the ROA coincides with a level set is challenging. In the seminal paper \cite{vannelli_maximal_1985}, related to Zubov's theorem \cite{zubov1961methods}, it is shown that the ROA is exactly characterized by the $1$-sublevel set of a solution to a partial differential equation (PDE) parametrized by the dynamics. Recent approaches \cite{liu_physics-informed_2025, barreau_supervised_2025} propose to learn the Lyapunov function as a physics-informed neural network \cite{karniadakis2021physics} where the loss reflects Zubov's PDE. While these methods target the maximal Lyapunov function during training, they enforce compliance with Zubov's PDE on at most a finite set of points in the domain, so sublevel-set analysis is still required to certify inner estimates of the ROA.


A different approach is to introduce hard constraints on the neural model's parametrization so that the resulting system is stable by design, thereby foregoing post-training verification. Early examples of constrained parametrizations include energy-based methods that model Hamiltonian systems \cite{greydanus2019hamiltonian, zhong2019symplectic} and Lagrangian systems \cite{cranmer2020lagrangian}, parametrizing the dynamics according to the Hamilton and Euler-Lagrange equations, respectively, which yield conservative systems by design. This has been generalized to port-Hamiltonian formulations, yielding dissipative and passive systems \cite{zhong_dissipative_2020,cheng_learning_2024,roth_stable_2025}, which are shown to be globally asymptotically stable \cite{cheng_learning_2024,roth_stable_2025}. 


Among the methods targeting Lyapunov stability theorems, the foundational work \cite{manek_learning_2020} proposed a differentiable projection layer applied to an unconstrained nominal model of the dynamics, rendering the neural system globally exponentially stable. The projection layer is defined by the gradient field of a jointly learned Lyapunov function with a unique global minimum, and the approach has been extended to nonautonomous systems that are input-to-state stable \cite{yang_input--state_2022} and input-to-output stable \cite{okamoto_learning_2023}, and generalized to asymptotically stable invariant sets \cite{takeishi_learning_2020, massaroli_stable_2020}. Recently, \cite{min_hardnet_2025} generalized the projection layer to accommodate multiple input-dependent inequality constraints of a general form. 

To capture locally asymptotically stable equilibria with bounded ROAs, several methods \cite{sochopoulos_learning_2024,okamoto_learning_2023,massaroli_stable_2020} let the Lyapunov function have multiple local minima while retaining a global decay condition; an alternative is to promote local stability through loss regularization on a confined domain \cite{schlaginhaufen_learning_2021}. These methods, however, lack an explicit characterization of the ROA of a given local attractor in the model system, and do not address the universal approximation of locally stable systems.


We identify the need for a model class that is hard-constrained to be locally asymptotically stable at a given equilibrium point with an explicitly and exactly characterized ROA and that is supported by universal approximation guarantees. Our contributions are stated below.\\
    \begin{enumerate}
        \item \textbf{Locally stable neural ODE:} We propose a neural ODE architecture that is hard-constrained to be asymptotically stable at a given equilibrium point, with an exactly characterized ROA. As a result, trajectories of the learned system are guaranteed to converge within an explicitly defined domain.
        \item \textbf{Universal approximation:} We show that the proposed architecture universally approximates exponentially stable dynamics within the ROA. The design is based on a jointly learned maximal Lyapunov function, and a key contribution is to solve the existence problem of a target function whose properties enable universal approximation.
        \item \textbf{Unsupervised learning of the ROA:} We derive sufficient conditions under which the architecture learns the ROA of the true system up to arbitrary precision by measure of the set-difference.
        \item \textbf{Practical implementation:} We propose an implementation of the architecture consistent with the theory, together with a bias-free learning objective that jointly promotes accurate trajectory reconstruction and precise identification of the ROA from trajectory data alone. 
    \end{enumerate}
\subsection{Related works}
A recent line of work leverages learning-based methods to address the challenging problems of certifying Lyapunov stability and estimating ROA. The problem has three aspects: the Lyapunov function parametrization must meet the hypotheses of a stability theorem; the function shaping must target the decay condition of the system; and the resulting ROA estimate must be certified. Among approaches that learn a Lyapunov function for a fixed (unconstrained) model of the dynamics, \cite{richards_lyapunov_2018} proposes a positive definite, locally Lipschitz Lyapunov network designed by a quadratic function in a feature space, trained by a classification loss on trajectories labeled by ROA inclusion. In \cite{wang2024monotone}, the feature maps are designed by bi-Lipschitz networks, which additionally ensures that the sublevel sets are homeomorphic to a unit ball, a property characteristic of Lyapunov functions \cite{wilson_structure_1967}. For maximal Lyapunov functions \cite{vannelli_maximal_1985} characterized by Zubov's theorem \cite{zubov1961methods}, the methods in \cite{liu_physics-informed_2025, barreau_supervised_2025} use physics-informed neural networks trained with a loss promoting compliance with Zubov's PDE. However, these methods require additional verification analysis during the learning process to identify subsets guaranteed to be contained in the ROA.

A contrasting approach is to design a direct parametrization of the model dynamics and Lyapunov function such that the conditions of a Lyapunov theorem are satisfied by design, and jointly learn these functions from data. In this category, the foundational work \cite{manek_learning_2020} proposes a vector field parametrization $\fhat:\Rn\to\Rn$ defined by
\begin{equation}\label{eq:mk}
    \fhat\triangleq \fnomapprox-\nabla \Vapprox \frac{\relu(\nabla \Vapprox^\top \fnomapprox+\alpha \Vapprox)}{\|\nabla \Vapprox\|^2}
\end{equation} where $\fnomapprox:\Rn\to\Rn$ is an unconstrained nominal model and $\Vapprox:\Rn\to\R$ is a Lyapunov function modeled by a positive definite input-convex neural network \cite{amos2017input} and $\alpha>0$. The design \eqref{eq:mk} implies \begin{equation}\label{eq:ineq-manek}
     \forall x\in \Rn,\quad \nabla \Vapprox(x)^\top\fhat(x) \leq -\alpha \Vapprox(x),
\end{equation} and therefore renders the origin a globally exponentially stable equilibrium point in the corresponding dynamical system. However, the design \eqref{eq:mk} poses problems when applied to a locally stable system with bounded ROA. Observe that the model error satisfies
\begin{equation}
    \|f-\fhat\|\leq \|f-\fnomapprox\|+\relu(\gVapprox^\top \fnomapprox+\Vapprox)/\|\gVapprox\|.
\end{equation}
As the boundary of the ROA is formed by trajectories \cite{khalil2002nonlinear}, a good fit $\fnomapprox\approx f$ gives $\gVapprox^\top \fnomapprox\approx0$ for this set of points. With $\Vapprox$ positive definite, a small error then forces the bound on $\Vapprox/\|\gVapprox\|$ small, so $\gVapprox$ might blow up near the boundary. This prevents universal approximation of locally asymptotically stable systems.

The projection method \eqref{eq:mk} was recently generalized by \cite{min_hardnet_2025} to parametrizations accommodating multiple input-dependent inequality constraints of the form
\begin{equation}\label{eq:hc}
 \forall x\in D,\quad A(x)\fhat(x)\leq b(x).
\end{equation}
Universal approximation is established over the class of target functions satisfying the prescribed constraints. This setting, however, presumes the constraining functions defining the inequality \eqref{eq:hc} are given. If applied to neural ODEs and Lyapunov-based inequalities, such as Zubov's PDE, the constraining functions are not known beforehand and must themselves be learned. This raises the difficulty: universal approximation must hold against the class of functions defining the model dynamics jointly with the learned constraining functions. Moreover, well-posedness of the initial-value problem under the constrained model must be ensured.



\subsection{Organization}
The remainder of the paper is organized as follows. Section~\ref{sec:preliminaries} reviews preliminaries on nonlinear stability and maximal Lyapunov functions, and states the problem. Section~\ref{sec:contribution} introduces the proposed neural ODE architecture, constrained with respect to the gradient field of a jointly learned Lyapunov function, and states the main results of the paper. These are established by the following sections in turn: Section~\ref{sec:universal-approximation} shows that this model class universally approximates exponentially stable dynamics within the ROA, and resolves the existence of a suitable target Lyapunov function. Section~\ref{sec:flow-roa} analyses the flow of the constrained neural ODE and shows that its ROA is exactly characterized by the $1$-sublevel set of the learned Lyapunov function, together with conditions under which it approximates the true ROA. Section~\ref{sec:implementation} details a practical implementation, including admissible architectures and a bias-free learning objective. Section~\ref{sec:simulations} contains experimental results on nonlinear systems with nonconvex regions of attraction, and Section~\ref{sec:conclusion} concludes.

\subsection{Notation}
We write $\langle u , v \rangle\triangleq u^\top v$ for the Euclidean inner product on $\Rn$, with norm $\|\cdot\|\triangleq\sqrt{\langle\cdot,\cdot\rangle}$ and induced matrix norm $\|A\|\triangleq\sup_{\|x\|=1}\|Ax\|$. We use the metric topology on $\Rn$ and Lebesgue measure $\lambda_n(\cdot)$; for a set $A\subset \Rn$, the interior is $A^\circ$ and the closure $\bar{A}$. Open balls are defined by $B_r(x)\triangleq\{y\mid \|y-x\|<r\}$, abbreviated $B_r$ when $x=0$, and the sublevel set of a function $V$ by $S_c(V)\triangleq\{x\mid V(x)\leq c\}$. We denote by $D_x^kf(x,\dots)$ the $k^{\text{th}}$ derivative with respect to $x$ of a vector-valued function $f$, omitting the subscript when $f$ takes a single argument, and adopt the conventions $[Df]_{ij}=\frac{\partial f_i}{\partial x_j}$ and, for consistency with standard control notation, that the gradient $\nabla V$ of a scalar-output function is a column vector. We write $u(\dots)\lesssim v(\dots)$ for $u(\dots)\leq C v(\dots)$ with a fixed positive constant $C$ that may change from line to line, $[n]\triangleq(1,2,\dots,n)$, and $\Re z$ for the real part of $z\in\mathbb{C}$. The space of $k$-continuously differentiable functions on a domain $D$ with range $R$ is denoted $\mathcal{C}^k(D,R)$, with norm
\begin{equation}
    \|f\|_{\mathcal{C}^k(D)}\triangleq\max_{|\alpha|\leq k}\sup_{x\in D}\|D^\alpha f(x)\|,
\end{equation}
where $\alpha$ is a multiindex, and we write $\mathcal{C}$ for $\mathcal{C}^0$. A function $f:D\to\Rn$ on a domain $D\subset\Rn$ is \textit{locally Lipschitz} if for each $x\in D$ there exist a neighborhood $\mathcal{N}_x$ and a constant $L_x>0$ such that
\begin{equation}\label{eq:lipschitz-condition}
    \forall u,v\in \mathcal{N}_x,\quad \|f(u)-f(v)\|\leq L_x\|u-v\|,
\end{equation}
and \textit{Lipschitz} on $W\subset D$ if \eqref{eq:lipschitz-condition} holds for all $x\in W$ with a uniform constant $L_x=L$. Finally, a function $\sigma:D\to R$, where $D\subset\Rn$ with $0\in D$ and $R\subset\R$, is \textit{positive definite} if $\sigma(0)=0$ and $\sigma(x)>0$ for all $x\in D\backslash\{0\}$.

\section{Preliminaries and problem formulation}\label{sec:preliminaries}
We begin this section by reviewing preliminaries on dynamical systems defined by ODEs,  Lyapunov stability theory, and neural ODEs. We then give our problem formulation. 
\subsection{Preliminaries}
\subsubsection{Nonlinear autonomous systems}
Consider the autonomous dynamical system defined by the ODE:
\begin{equation}\label{eq:flow}
    \Sigma_{f, D}:\quad\forall t\geq 0,\quad \begin{array}[t]{l}
        \partial_t\pi(t,x_0)=f(\pi(t,x_0)),\\
        \pi(0, x_0)=x_0\in D,
    \end{array}
\end{equation} where $D\subset \Rn$ is a domain, $f:D\to\Rn$ is a function, and $\pi:\R_{\geq 0}\times D\to D$ is the \textit{flow} of the system.
A set $M$ is \textit{positively invariant} with respect to the flow $\pi$ if 
\begin{equation}
    x_0\in M\implies \forall t\geq 0,\quad \pi(t, x_0)\in M.
\end{equation}
It is well-known that whenever $f$ is locally Lipschitz, the initial-value problem \eqref{eq:flow} with $x_0\in M$ admits a unique solution for all positive $t$ if $M\subset D$ is positively invariant with respect to the flow and trajectories are bounded \cite[Ch. 3]{khalil2002nonlinear}. Moreover, the flow $\pi$ inherits the regularity of the vector field $f$ on its domain of definition \cite{Sideris2013}. 

Suppose $f(0)=0$, then \eqref{eq:flow} implies that $M=\{0\}$ is positively invariant. We call $x=0$ an \textit{equilibrium point}. A fundamental problem in control theory is to characterize how the flow responds to perturbations of a state at equilibrium.
\begin{definition}[Stability and Asymptotic Stability]\label{def:stability}
    Consider the system $\Sigma_{f,D}$ with flow $\pi$ where $f\in\cn{D,\Rn}$ is locally Lipschitz and $f(0)=0$. The equilibrium point $x=0$ is
    \begin{itemize}
        \item \textit{stable} if for every $\varepsilon>0$ there exists $\delta>0$ such that \begin{equation}\|x_0\|<\delta\implies \forall t\geq 0,\quad\|\pi(t, x_0)\|< \varepsilon,\end{equation}
        \item \textit{asymptotically stable} if it is stable and $\delta$ can be chosen such that \begin{equation}\|x_0\|<\delta \implies \lim\limits_{t\to\infty} \pi(t, x_0)=0.\end{equation}
    \end{itemize}
\end{definition}

When the equilibrium point is asymptotically stable, a central question is to characterize the set of initial conditions whose trajectories converge to the equilibrium. This set is known as the \textit{Region of Attraction (ROA)}: \begin{equation}\roa\triangleq\{x_0\in D\mid \lim\limits_{t\to\infty}\pi(t, x_0)=0\}.\end{equation} We distinguish between \textit{local asymptotic stability}, where $\roa\subset\Rn$, and \textit{global asymptotic stability}, where $\roa=\Rn$. It is well-known that 
$\roa$ is an open, connected, and invariant set, and $\partial\roa$ is formed by trajectories \cite{khalil2002nonlinear}.
\subsubsection{Sufficient stability conditions with Lyapunov's method}
Sufficient conditions for stability and asymptotic stability are given by Lyapunov's direct method \cite{khalil2002nonlinear}:
\begin{theorem}[Lyapunov's direct method]\label{thm:lyapunovdirect}
    Consider the system $\Sigma_{f, D}$ where $f\in\cn{D,\Rn}$ is locally Lipschitz and $f(0)=0$. If there exists a positive definite function $V\in\cd{D,\R}$ such that
    \begin{equation}\label{eq:lpv-dis}
        \forall x\in D,\quad \gV(x)^\top f(x)\leq 0,
    \end{equation}
    then $x=0$ is a stable equilibrium point. If additionally
    \begin{equation}\label{eq:dis-s}
        \forall x\in D\backslash\{0\},\quad\gV(x)^\top f(x)<0,
    \end{equation}
    then it is asymptotically stable.
\end{theorem}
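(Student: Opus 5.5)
The argument is the classical one for Lyapunov's direct method (cf. \cite{khalil2002nonlinear}). It uses sublevel sets of $V$ that are trapped inside small balls. For stability, I would fix $\varepsilon>0$ and choose $r\in(0,\varepsilon]$ such that $\bar B_r\subset D$; this is possible because $D$ is open and contains $0$. Since $V$ is continuous and positive on the compact sphere $\partial B_r$, the quantity $\alpha\triangleq\min_{\|x\|=r}V(x)$ is strictly positive. I would then fix $\beta\in(0,\alpha)$ and set
\begin{equation*}
\Omega_\beta\triangleq\{x\in\bar B_r\mid V(x)\leq\beta\}.
\end{equation*}
\textbf{Step 1: $\Omega_\beta$ lies in the open ball $B_r$.} A point of $\Omega_\beta$ cannot lie on $\partial B_r$, because $V\ge\alpha>\beta$ there.

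\textbf{Step 2: $\Omega_\beta$ is positively invariant.} Along any solution, $\frac{d}{dt}V(\pi(t,x_0))=\gV^\top f\le 0$ by \eqref{eq:lpv-dis}, so $V$ is nonincreasing along trajectories. Suppose a trajectory starting in $\Omega_\beta$ left $\bar B_r$. By continuity it would first have to cross $\partial B_r$, where $V>\beta$, which contradicts monotonicity. The trajectory therefore stays in the compact set $\Omega_\beta\subset D$. Local Lipschitzness of $f$ then gives a unique solution defined for all $t\ge0$, by the existence result recalled before Definition~\ref{def:stability}.

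\textbf{Step 3: choice of $\delta$.} Since $V(0)=0$ and $V$ is continuous, there is $\delta\in(0,r)$ with $V<\beta$ on $B_\delta$. Hence $B_\delta\subset\Omega_\beta$, and every trajectory starting in $B_\delta$ remains in $B_r\subset B_\varepsilon$. This proves stability.

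\textbf{Step 4: asymptotic stability under \eqref{eq:dis-s}.} I expect this step to be the main obstacle. Take $x_0\in B_\delta$. The map $t\mapsto V(\pi(t,x_0))$ is nonincreasing and bounded below by $0$, so it converges to some limit $c\ge0$. I will show $c=0$ by contradiction. Suppose $c>0$. By continuity of $V$ and $V(0)=0$, there is $d>0$ with $B_d\subset\{V<c\}$, and therefore the trajectory stays in the compact annulus
\begin{equation*}
K\triangleq\{d\le\|x\|\le r\}.
\end{equation*}
The function $\gV^\top f$ is continuous and strictly negative on $K$, so its maximum over $K$ is some $-\gamma<0$. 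Integrating along the trajectory gives
\begin{equation*}
V(\pi(t,x_0))\le V(x_0)-\gamma t,
\end{equation*}
which becomes negative for large $t$. This contradicts $V\ge 0$, so $c=0$.

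\textbf{Step 5: convergence of the state.} It remains to pass from $V(\pi(t,x_0))\to0$ to $\pi(t,x_0)\to0$. For any $\eta\in(0,r)$, let $m_\eta\triangleq\min_{\eta\le\|x\|\le r}V(x)$, which is positive because $V$ is positive definite and the set is compact. Once $V(\pi(t,x_0))<m_\eta$, the trajectory, which stays in $B_r$, must lie in $B_\eta$. Hence $\pi(t,x_0)\to0$. The key ingredient throughout is compactness of the annuli: it turns pointwise strict positivity of $V$ and strict negativity of $\gV^\top f$ into uniform bounds.
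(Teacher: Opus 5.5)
Your proposal is correct. It reproduces the classical sublevel-set argument of Khalil (trapping $\Omega_\beta$ inside $B_r$, then the contradiction on the annulus $\{d\le\|x\|\le r\}$ for asymptotic stability), which is exactly the proof the paper relies on: the paper states this theorem as a preliminary and defers to \cite{khalil2002nonlinear} rather than proving it.
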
 We call $V$ in Theorem~\ref{thm:lyapunovdirect} a \textit{Lyapunov function} for $\Sigma_{f,D}$. Whenever $V$ is radially unbounded, it is possible to derive subsets of the ROA of the form $S_c(V)\subset D$.  
\begin{remark}
    Note that the second and third condition in Theorem~\ref{thm:lyapunovdirect} concerns the evolution of $V$ along flow trajectories:
    \begin{equation}
        \begin{split}
            \forall t\geq 0,\quad \forall x\in D,\\
        \frac{d}{dt}V(\pi(t,x))=\gV(\pi(t,x))^\top f(\pi(t,x)).
        \end{split}
    \end{equation}
    By analogy, if $V$ represents the energy in a closed dynamical system, these conditions reflect that energy is conserved or dissipated as the state evolves.
\end{remark}
\subsubsection{Maximal Lyapunov functions}
A system may admit several Lyapunov functions, each presenting different inner estimates of the ROA. An exact characterization of the ROA can be obtained by a special class known as \textit{maximal Lyapunov functions} \cite{vannelli_maximal_1985} closely related to the method of Zubov \cite{zubov1961methods}. 
The following result \cite{vannelli_maximal_1985} establishes the maximal Lyapunov function as an equivalent characterization of the ROA.
\begin{theorem}\label{thm:maximal}
   Consider the system $\Sigma_{f, D}$ where $f\in\cn{D,\Rn}$ is locally Lipschitz and $f(0)=0$. Suppose that the equilibrium $x=0$ is asymptotically stable with ROA $\roa\subset D$. The open set $A\subset \Rn$ with $0\in A$ coincides with $\roa$ if there exists a positive definite function $U\in\cd{A,\R}$ and a positive definite function $\phi:A\to\R$ such that
    \begin{equation}\label{eq:mlpv-dis}
        \forall x\in A,\quad \nabla U(x)^\top f(x) =-\phi(x),
    \end{equation}
    \begin{equation}\label{eq:mlpv-da}
        U(x)\to \infty\quad \text{as}\,x\to\partial A\,\text{and/or}\,\|x\|\to\infty.
    \end{equation} If $f$ is Lipschitz on $\roa$ and is $\mathcal{C}^1(\mathcal{N})$ in a neighborhood $\mathcal{N}$ of $x=0$, the converse statement holds.
\end{theorem}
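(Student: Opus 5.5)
We prove the two directions separately: sufficiency follows from a level-set trapping argument along trajectories, and the converse follows from an explicit integral construction of $U$.

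\emph{Sufficiency ($A=\roa$).}

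First we show $A\subset\roa$. Take $x_0\in A$ and let $\pi(t,x_0)$ be its trajectory. By \eqref{eq:mlpv-dis}, $t\mapsto U(\pi(t,x_0))$ is nonincreasing as long as the trajectory stays in $A$. Condition \eqref{eq:mlpv-da} forces the sublevel set $\{x\in A\mid U(x)\le U(x_0)\}$ to be compact and contained in $A$. So the trajectory cannot reach $\partial A$ or escape to infinity, and it exists for all $t\ge0$.

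Along the trajectory, $U$ is bounded below and nonincreasing, so it converges. By LaSalle's invariance principle, or directly since $\phi$ is positive definite and bounded away from zero on compact sets excluding a ball around $0$, we get $\phi(\pi(t,x_0))\to0$. Hence $\pi(t,x_0)\to0$, which gives $A\subset\roa$.

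Next we show $\roa\subset A$. Take $x_0\in\roa$. Its trajectory eventually enters a small ball $B_r\subset A$, since $A$ is open and contains $0$. Suppose $x_0\notin A$. Follow the trajectory backward from a time when it lies in $B_r$, and let $\tau$ be the last time it lies in $\partial A$; such a time exists because $\roa$ is connected to $0$ along the orbit. On the interval after $\tau$, \eqref{eq:mlpv-da} gives $U(\pi(t,x_0))\to\infty$ as $t\downarrow\tau$. This contradicts $U$ being nonincreasing forward in time while the trajectory stays in $A$, because $U$ at the later time in $B_r$ is finite. Therefore $x_0\in A$.

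\emph{Converse.} Take $A=\roa$ and define
\[
U(x)=\int_0^\infty \|\pi(t,x)\|^2\,dt .
\]
Then $\phi(x)=\|x\|^2$, since $\frac{d}{dt}U(\pi(t,x))=-\|\pi(t,x)\|^2$. We must check three things: the integral is finite, $U$ is $\mathcal C^1$, and \eqref{eq:mlpv-da} holds.

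\emph{Finiteness.} Because $f$ is $\mathcal C^1$ near $0$ and $0$ is asymptotically stable, the linearization is Hurwitz. We assume exponential stability of the equilibrium here; otherwise we would replace $\|x\|^2$ by a suitably chosen positive definite $\phi$ (Massera-type construction). Under this assumption, trajectories entering a small ball decay exponentially. Each $x\in\roa$ reaches that ball in finite time, uniformly on compact subsets of $\roa$, so $U$ is finite and continuous.

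\emph{Regularity.} We obtain $U\in\mathcal C^1$ by differentiating under the integral. This uses exponential bounds on $D_x\pi(t,x)$, which come from the variational equation near $0$ together with the Lipschitz bound on $\roa$ over the finite transit time.

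\emph{Blow-up (main obstacle).} Let $x_k\to\partial\roa$ or $\|x_k\|\to\infty$. Since $\roa$ is open and invariant, the boundary point $\bar x$ has a trajectory that never enters $B_r$. By continuous dependence on initial conditions, with Lipschitz $f$ giving Gronwall estimates on finite horizons, the trajectories from $x_k$ stay outside $B_{r/2}$ for times $T_k\to\infty$. Hence $U(x_k)\ge (r/2)^2T_k\to\infty$.

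For $\|x_k\|\to\infty$, the Lipschitz bound limits how fast trajectories can shrink toward the origin. This again forces a long transit time outside $B_{r/2}$, and so $U(x_k)\to\infty$.

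Making these uniform transit-time estimates rigorous is the delicate step. It is exactly where the global Lipschitz hypothesis on $\roa$ is needed.
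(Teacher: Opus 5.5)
\textbf{Gap in the proof of $\mathcal{R}\subset A$.} On $(\tau,t_1]$ the map $t\mapsto U(\pi(t,x_0))$ is nonincreasing, so its larger values occur at earlier times. Hence $U(\pi(t,x_0))\to\infty$ as $t\downarrow\tau$ is fully compatible with monotonicity and with $U(\pi(t_1,x_0))<\infty$, and no contradiction arises.

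What would give a contradiction is a bound on the rate of decrease. From $U(\pi(t,x_0))=U(\pi(t_1,x_0))+\int_t^{t_1}\phi(\pi(s,x_0))\,ds$, the left side stays bounded as $t\downarrow\tau$ once $\phi$ is bounded on the compact arc $\pi([\tau,t_1],x_0)\subset\mathcal{R}$. But that arc ends at $\pi(\tau,x_0)\in\partial A$, outside the set on which the statement defines $\phi$, so the hypotheses give no such bound.

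With $\phi:A\to\mathbb{R}$ as written, the inclusion is in fact false. Take $\dot x=-x$ on $\mathbb{R}$, so $\mathcal{R}=\mathbb{R}$, and set $A=(-1,1)$, $U(x)=-\ln(1-x^2)$. All hypotheses hold with $\phi(x)=2x^2/(1-x^2)$ positive definite on $A$, yet $A\neq\mathcal{R}$. Along the trajectory from $x_0=2$, $U(\pi(t,2))=-\ln(1-4e^{-2t})$ decreases and blows up as $t\downarrow\ln 2$. This is exactly the scenario your argument claims is impossible.

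So the stated hypotheses only give $A\subset\mathcal{R}$, and your first part proving that is fine. Equality needs $\phi$ continuous, hence locally bounded, on a set containing $\mathcal{R}\cup A$. The standard formulations (Zubov, Vannelli--Vidyasagar) take $\phi$ continuous and positive definite on all of $\mathbb{R}^n$. With that, $U(\pi(t,x_0))\le U(\pi(t_1,x_0))+(t_1-\tau)\max_{s\in[\tau,t_1]}\phi(\pi(s,x_0))$ contradicts the blow-up at $\partial A$.

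\textbf{The converse.} The paper does not prove this statement itself. When it re-runs Vannelli--Vidyasagar's argument in the proof of Theorem~\ref{thm:pdesol}, it uses exactly your construction with $\gamma(r)=r^2$: exponential decay near $0$ via Hartman linearization (Lemma~\ref{lem:flow}), control of $D_x\pi$ via the variational equation (Lemma~\ref{lem:dx-roa}), and the blow-up step deferred to the reference. Two points in your version need fixing.

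First, asymptotic stability with $f\in\mathcal{C}^1$ near $0$ does not make $Df(0)$ Hurwitz; $\dot x=-x^3$ is a counterexample. By assuming exponential stability you prove the converse only under the extra hypothesis the paper adds as Assumption~\ref{ass:hurwitz}. The converse as stated needs a Massera-type $\gamma$ for which both $\int_0^\infty\gamma(\|\pi(t,x)\|)\,dt$ and $\int_0^\infty\gamma'(\|\pi(t,x)\|)\|D_x\pi(t,x)\|\,dt$ converge locally uniformly, and you do not construct one.

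Second, differentiating under the integral through $D_x\pi$ requires $f\in\mathcal{C}^1$ along whole trajectories. Lipschitz continuity on $\mathcal{R}$ plus $\mathcal{C}^1$ regularity near $0$ only makes $\pi(t,\cdot)$ Lipschitz. This is why the paper imposes $f\in\mathcal{C}^1(\mathcal{R})$ in Assumption~\ref{ass:cn}.
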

\subsubsection{Neural ODEs}
Neural ODEs \cite{chen2018neural} are a data-driven approach to modeling the system $\Sigma_{f, D}$. A neural network model $\fhat$ defines the system
\begin{equation}
\Sigma_{\fhat, D}:\quad \forall t\geq 0,\quad
    \begin{array}[t]{l}
        \partial_t \hat{\pi}(t,x_0) = \fhat(\hat{\pi}(t, x_0)),\\ \hat{\pi}(0, x_0)=x_0\in D,
    \end{array}
\end{equation} and is learned by matching the flow $\hat{\pi}$ against observed trajectories of $\pi$. Thus, the parametrization of $\fhat$ is learned by minimizing the trajectory reconstruction loss,
\begin{equation}\label{eq:Ld}
    \mathcal{L}_{d}\triangleq\int_D\int_0^T\|\pi(t, x)-\hat{\pi}(t, x)\|dtdx.
\end{equation} for some observation horizon $T>0$.
\subsection{Problem formulation}\label{sec:problem}
 In this paper, we are interested in dynamical systems $\Sigma_{f,D}$ with asymptotically stable equilibrium points.
\begin{assumption}\label{ass:roa}
    The system $\Sigma_{f, D}$ is defined by a locally Lipschitz function $f\in \cn{D,\Rn}$ on the domain $D\subset\Rn$ with $0\in D$ satisfying $f(0)=0$. The equilibrium point $x=0$ is asymptotically stable with ROA $\roa\subset D$.
\end{assumption} 
We note that the conditions of Assumption~\ref{ass:roa} are sufficient to ensure well-posedness of $\Sigma_{f,\roa}$. In particular, since the equilibrium point $x=0$ is asymptotically stable, the ROA $\roa$ is positively invariant, and trajectories converge to the origin. Therefore, $\pi$ is well-defined on $\R_{\geq 0}\times \roa$.

We have introduced how the ROA can be characterized in terms of a maximal Lyapunov function. However, computing this Lyapunov function is a challenging problem. Motivated by \cite{vannelli_maximal_1985}, our analysis will center on a target Lyapunov function defined via the integral along trajectories of $\pi$. Therefore, the properties of our target Lyapunov function largely depend on the properties of $\pi$. Thus, we introduce the following regularity conditions:
\begin{assumption}\label{ass:cn}
    The function $f$ belongs to $\cd{\roa}$ and there exists a neighborhood $\mathcal{N}$ of $x=0$ such that $\mathcal{N}\subset \roa$ and $f$ belongs to $\mathcal{C}^3(\mathcal{N})$.
\end{assumption} 
The regularity conditions on $f$ carry over to $\pi$ in the respective domains \cite{Sideris2013}, namely, $\pi$ belongs to $\cdn(\R_{\geq 0}\times\roa)$ and $\mathcal{C}^3(\R_{\geq 0}\times\mathcal{N})$.

To further confirm admissibility of our target Lyapunov function, we assume that the flow $\pi$ decays exponentially as $t\to\infty$ close to the origin. This motivates the following assumption:
\begin{assumption}\label{ass:hurwitz}
    The Jacobian $A\triangleq Df(0)$ is Hurwitz, that is, its spectrum $\sigma(A)$ satisfies
    \begin{equation}\label{eq:hurwitz}
            \forall \lambda\in\sigma(A),\quad \Re\lambda < 0. 
    \end{equation}
\end{assumption} 
Note that since $f$ is $\mathcal{C}^{1}$ around the equilibrium point $x=0$, a Taylor expansion of the form
\begin{equation}\label{eq:taylor}
    f(x)=Df(0)x + o(\|x\|).
\end{equation} is admissible for sufficiently small $x$ \cite{Coleman2012}. Thus, the behavior of the nonlinear system $\Sigma_{f,D}$ approaches the behavior of an exponentially stable linear system close to the origin under Assumption~\ref{ass:hurwitz}. 
\begin{remark}
    For $f\in \cd{\mathcal{N}}$, the Hurwitz condition on $A$ in Assumption~\ref{ass:hurwitz} implies, by Lyapunov's direct method, that the equilibrium point $x=0$ in $\Sigma_{f,D}$ is asymptotically stable \cite{khalil2002nonlinear}. Thus, Assumption~\ref{ass:cn} and \ref{ass:hurwitz} establish sufficient conditions to confirm that the setting in Assumption~\ref{ass:roa} is well-posed.
\end{remark}

Our problem formulation can now be stated.\\
\begin{problem}
Consider the dynamical system $\Sigma_{f,D}$ with flow $\pi$ and suppose Assumption~\ref{ass:roa}--\ref{ass:hurwitz} are satisfied. Assume the vector field $f$ is unknown while we have observations of the flow $\pi:$
\begin{equation}\label{eq:data}
    \mathcal{D}=\{\pi(t,x)\mid t\in [0, T], x\in D\}
\end{equation}
for some $T>0$. Construct a neural model $\fhat\approx f$ and a maximal Lyapunov function $\Vapprox$ for the system $\Sigma_{\fhat,D}$. Leverage $\Vapprox$ to characterize the ROA $\roaapprox$ of the system $\Sigma_{\fhat,D}$ and use $\roaapprox$ as an estimate of the true ROA $\roa$ of the system $\Sigma_{f,D}$.
\end{problem}
\section{Locally stable Neural ODE}\label{sec:contribution}
In this section, we propose our solution to the problem stated in Section~\ref{sec:problem}. First, let us define three classes of constrained neural networks.
\begin{definition}[Neural Network Function Classes]\label{def:nns}
Let $D\subset \Rn$ be a domain with $0\in D$. Consider neural network architectures with the following properties.\\
\textbf{Class $\mathcal{F}$:} A neural network $\fnomapprox:\Rn\to\Rn$ belongs to $\mathcal{F}_D$ if $\fnomapprox\in\cn{D, \Rn}$ and: \begin{equation}
    \fnomapprox(0)=0.
\end{equation}\\
\textbf{Class $\mathcal{V}$:} A neural network $\Vapprox:\Rn\to\R$ belongs to $\mathcal{V}_D$ if $\Vapprox\in\mathcal{C}^1(D, \R)$ and:
\begin{equation}\label{eq:zgvtilde}
    \forall x\in D,\quad \nabla \Vapprox(x) = 0 \iff x = 0.
\end{equation}
\textbf{Class $\mathcal{P}$:} A neural network $\npdapprox:\Rn\to\R$ belongs to $\mathcal{P}_D$ if $\npdapprox\in\cn{D,\R}$ and: \begin{equation}
    \npdapprox(0) = 0.
\end{equation}
\end{definition}
The proposed neural model combines these network architectures in the form given below.
\begin{definition}\label{def:fhat}
    Let $D\subset \Rn$ be a domain with $0\in D$. Pick $\fnomapprox\in \mathcal{F}_D$, $\Vapprox\in\mathcal{V}_D$, and $\npdapprox\in \mathcal{P}_D$. Let $\Wapprox\triangleq\gVapprox$ and define $\Wnapprox:D\to\Rn$ by
    \begin{equation}\label{eq:vhat-gen}
    \Wnapprox:x\mapsto
    \begin{cases}
        \Wapprox(x)/\|\Wapprox(x) \|& \text{if } \Wapprox(x)\neq 0,\\
        0&\text{else}.
    \end{cases}
\end{equation}
    We say that $\fhat:D\to\Rn$ is generated by $\fnomapprox$, $\Vapprox$, and $\npdapprox$ if 
    \begin{equation}\label{eq:fhat}
    \fhat \triangleq \fnomapprox-\Wnapprox\Wnapprox^\top \fnomapprox+\Wnapprox(-\npdapprox(1-\Vapprox^2)),
\end{equation} and denote this by
\begin{equation}
    \fhat \triangleq \mathcal{G}_D(\fnomapprox, \Vapprox, \npdapprox).
\end{equation}
\end{definition}

Consider the system $\Sigma_{f,D}$ and suppose Assumption~\ref{ass:roa}--\ref{ass:hurwitz} are satisfied. In this paper, we show that there exists $\fnomapprox\in\mathcal{F}_D$, $\Vapprox\in\mathcal{V}_D$, $\npdapprox\in\mathcal{P}_D$ such that $\Sigma_{f,\roa}$ is approximated by $\Sigma_{\fhat,\roaapprox}$, where $\fhat\triangleq\mathcal{G}_D(\fnomapprox, \Vapprox, \npdapprox)$ and the model system $\Sigma_{\fhat, D}$ has an asymptotically stable equilibrium point at $x=0$ with ROA $\roaapprox=S_1(\Vapprox)^\circ$. In particular:
\begin{itemize}
    \item We show that $\fhat$ is a universal approximator of $f$ in $\roa$
    \item We show that $\Sigma_{\fhat, D}$ has an asymptotically stable equilibrium point in the origin, and that $\Vapprox$ is a scaled maximal Lyapunov function such that the ROA $\roaapprox$ exactly coincides with $S_1(\Vapprox)^\circ$. 
    \item We show that $\roaapprox$ approximates $\roa$ by measure of the set-difference under certain  conditions.
\end{itemize}
We propose neural architectures that satisfy the class conditions in $(\mathcal{F}, \mathcal{V}, \mathcal{P})_D$, and design a learning algorithm that promotes effective learning of the parametrizing functions $\fnomapprox$, $\Vapprox$, and $\npdapprox$ using only flow data $\mathcal{D}$, cf. \eqref{eq:data}.
\section{Universal approximation}\label{sec:universal-approximation}
In this section, we show that $\fhat$ in Definition~\ref{def:fhat} universally approximates locally exponentially stable systems.
\subsection{Universal approximation of model class}
We first establish sufficient conditions for universally approximation in $\mathcal{C}$ with function projections of the form \eqref{eq:fhat}. The proof is summarized below, and given in full in Appendix~\ref{app:pf-universal}.
\begin{theorem}\label{thm:universal}
    Let $\Omega$ be a compact subset of $\Rn$ with $0\in\Omega$, and suppose $f \in \cn{\Omega,\mathbb{R}^n}$, $V \in \cn{\Omega, [0, 1]}$, $W \in \cn{\Omega, \mathbb{R}^n}$, and $\Psi \in \cn{\Omega \times \Lambda, \mathbb{R}}$ where $\Lambda\subset \R$ contains  $[0, 1]$. Assume that $f(0)=0$, that
    $\Psi(x, \cdot)$ is $L$-Lipschitz with constant $L>0$ uniformly for $x \in \Omega$, and that the following conditions are satisfied:
    \begin{equation}\label{eq:w-zero}
        \forall x\in \Omega,\quad W(x)=0\iff x=0,
    \end{equation}
    \begin{equation}\label{eq:fW-limit}
        \lim\limits_{x\to 0}\frac{\|f(x)\|}{\|W(x)\|}<\infty,
    \end{equation}
        \begin{equation}\label{eq:psi-zero}
        \forall z\in \Lambda,\quad \Psi(0,z)=0,\,\text{and}
    \end{equation}
    \begin{equation}\label{eq:vtf}
        \forall x\in \Omega,\quad W(x)^\top f(x)   = \| W(x) \| \Psi(x, V(x)).
    \end{equation}
Furthermore, suppose that for any $\delta > 0$ there exist $\fnomapprox \in \cn{\Omega,\mathbb{R}^n}$, $\Vapprox \in \cn{\Omega,\R}$, $\Wapprox \in \cn{\Omega,\mathbb{R}^n}$ 
such that
\begin{equation}\label{eq:maxdelta}
    \|f-\fnomapprox\|_{\cn{\Omega}}\leq\delta,\quad \|V-\Vapprox\|_{\cn{\Omega}}\leq\delta,\quad\|W-\Wapprox\|_{\cn{\Omega}}\leq\delta,
\end{equation}
where $\tilde{f}(0)=0$, $\Vapprox(\Omega)\subset \Lambda$,  and
\begin{equation}\label{eq:zw-hat}
    \forall x\in \Omega,\quad \Wapprox(x)=0\iff x=0.
\end{equation}
Then, for any $\varepsilon>0$ there exist $\fnomapprox,\,\Wapprox,\,\text{and}\,\Vapprox$
such that for $\Wnapprox:\Omega\to\Rn$ defined by
\begin{equation}\label{eq:vhat}
    \Wnapprox:x\mapsto
    \begin{cases}
        \Wapprox(x)/\|\Wapprox(x) \|& \text{if } \Wapprox(x)\neq 0,\\
        0&\text{else},
    \end{cases}
\end{equation} and $\Psi_{\Vapprox}:\Omega\to\R$ defined by
\begin{equation}
    \Psi_{\Vapprox}:x\mapsto \Psi(x,\Vapprox(x))   
\end{equation}
then $\fhat:\Omega\to\Rn$ defined by
\begin{equation}\label{eq:fhat-ua}
    \fhat \triangleq \fnomapprox-\Wnapprox\Wnapprox^\top \fnomapprox+\Wnapprox\Psi_{\Vapprox}
\end{equation}
is in $\cn{\Omega, \Rn}$ and satisfies
\begin{equation}\label{eq:supbound}
    \|f-\fhat\|_{\cn{\Omega}}\leq \varepsilon.
\end{equation}
\end{theorem}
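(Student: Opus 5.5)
The plan rests on one exact identity for the target, followed by a near/far splitting of $\Omega$ around the origin. Set $w\triangleq W/\|W\|$ on $\Omega\setminus\{0\}$ and $w(0)\triangleq 0$. By \eqref{eq:w-zero} and \eqref{eq:vtf}, $w^\top f=\Psi(x,V(x))$ for $x\neq 0$. At $x=0$ we have $f(0)=0$ and $\Psi(0,\cdot)=0$ by \eqref{eq:psi-zero}, so the target satisfies
\begin{equation*}
f = f - ww^\top f + w\,\Psi(\cdot,V(\cdot)) \quad\text{on }\Omega .
\end{equation*}
This says that $f$ has exactly the structure \eqref{eq:fhat-ua} with the "true" ingredients $(f,V,W)$. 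Subtracting \eqref{eq:fhat-ua} and regrouping gives
\begin{equation*}
f-\fhat = (I-\Wnapprox\Wnapprox^\top)(f-\fnomapprox) + (\Wnapprox\Wnapprox^\top - ww^\top)f + (w-\Wnapprox)\Psi(x,V) + \Wnapprox\bigl(\Psi(x,V)-\Psi(x,\Vapprox)\bigr).
\end{equation*}
Two of these terms are harmless everywhere. The first is at most $\delta$, because $I-\Wnapprox\Wnapprox^\top$ is an orthogonal projection (or the identity where $\Wnapprox=0$). The last is at most $L\delta$, by the uniform Lipschitz property of $\Psi(x,\cdot)$ and $\|\Wnapprox\|\le 1$.

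The main obstacle is the middle two terms. They involve $w-\Wnapprox$, and the natural bound $\|w-\Wnapprox\|\le 2\|W-\Wapprox\|/\|W\|$ blows up as $x\to 0$. I would handle this by splitting $\Omega$ into $\Omega\cap \bar B_r$ and $\Omega\setminus B_r$.

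\emph{Near the origin.} I would not use closeness of $\Wapprox$ to $W$ at all here, only crude bounds. We have $\|\Wnapprox\Wnapprox^\top-ww^\top\|\le 2$, and $|\Psi(x,V(x))|=|w^\top f|\le\|f(x)\|$. Hence the middle terms are bounded by $4\sup_{\Omega\cap \bar B_r}\|f\|$. This quantity can be made at most $\varepsilon/2$ by choosing $r$ small, using continuity of $f$ and $f(0)=0$.

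\emph{Away from the origin.} With $r$ fixed, compactness of $\Omega\setminus B_r$ and \eqref{eq:w-zero} give $m_r\triangleq\min_{\Omega\setminus B_r}\|W\|>0$. Then $\|w-\Wnapprox\|\le 2\delta/m_r$ there, and $\|\Wnapprox\Wnapprox^\top-ww^\top\|\le 2\|w-\Wnapprox\|$. So the middle terms are bounded by $6\|f\|_{\cn{\Omega}}\delta/m_r$.

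Finally, I would choose $\delta$ so small that $\delta(1+L)+6\|f\|_{\cn{\Omega}}\delta/m_r\le\varepsilon/2$. I would then pick $(\fnomapprox,\Vapprox,\Wapprox)$ from the hypothesis \eqref{eq:maxdelta} with this $\delta$. Combining the two regions yields \eqref{eq:supbound}.

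It remains to check that $\fhat\in\cn{\Omega,\Rn}$.
\begin{itemize}
\item On $\Omega\setminus\{0\}$, $\Wapprox\neq 0$ by \eqref{eq:zw-hat}, so $\Wnapprox$ is continuous there. Since $\Psi$ and $\Vapprox$ are continuous, so is $\fhat$.
\item At $x=0$, we have $\fhat(0)=0$ because $\fnomapprox(0)=0$ and $\Wnapprox(0)=0$. Also $\|\fnomapprox-\Wnapprox\Wnapprox^\top\fnomapprox\|\le\|\fnomapprox(x)\|\to 0$.
\item The remaining term satisfies $\|\Wnapprox\Psi_{\Vapprox}\|\le|\Psi(x,\Vapprox(x))|$. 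The set $\Vapprox(\Omega)\subset\Lambda$ is compact, so $\Psi$ is uniformly continuous on $\Omega\times\Vapprox(\Omega)$. Together with $\Psi(0,z)=0$, this gives $\sup_{z\in\Vapprox(\Omega)}|\Psi(x,z)|\to 0$ as $x\to 0$.
\end{itemize}

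This argument does not appear to need \eqref{eq:fW-limit} for the sup bound. I would double-check whether it is needed for well-definedness of $\Psi$ near the origin, or whether it is only used later in the paper.
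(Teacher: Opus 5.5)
Your proof is correct, but it handles the direction error $\|\Wnapprox-w\|$ near the origin differently from the paper. The paper bounds the error by $2\|f-\fnomapprox\|+\|f\|\,\|\Wnapprox-w\|+|\Psi(x,V)-\Psi(x,\Vapprox)|$, which has the same three error sources as your identity. On a small ball it then invokes \eqref{eq:fW-limit} to get $\|f(x)\|\,\|\Wnapprox(x)-w(x)\|\le 2\frac{\|f(x)\|}{\|W(x)\|}\|\Wapprox(x)-W(x)\|\le M\|\Wapprox(x)-W(x)\|$, and uses compactness only outside that ball. You instead use the crude bound $\|\Wnapprox-w\|\le 2$ there, let $f(0)=0$ and continuity of $f$ do the work, and fix $r$ before $\delta$.

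Your route therefore shows that \eqref{eq:fW-limit} is not needed for this theorem. It is not needed for well-definedness either, since $\Psi$ is continuous on $\Omega\times\Lambda$ by hypothesis. The ratio condition matters later, through its model-side analogue \eqref{eq:boundedlimit-model}, which Proposition~\ref{prop:fhat-lip} uses to make $\fhat$ locally Lipschitz at the origin.

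In exchange, the paper's route gives a quantitative estimate. Because $M$, $R_1$, $R_2$ do not depend on $\delta$, it yields $\|f-\fhat\|_{\mathcal{C}(\Omega)}\le C'\delta$ for a fixed constant $C'$. Your bound only yields convergence at a rate set by the trade-off between $\sup_{\Omega\cap\bar B_r}\|f\|$ and $m_r$; for instance, it is of order $\sqrt{\delta}$ when $\|f\|$ and $\|W\|$ both grow linearly near the origin.

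Your choice of a single $\delta$ with one simultaneous triple from \eqref{eq:maxdelta} is also slightly tidier than the paper's separate choices of $\fnomapprox$, $\Wapprox$, $\Vapprox$.
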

\begin{proofsketch}
    To prove continuity, we note that $\fnomapprox$ and $\Psi_{\Vapprox}$ vanishes as $x\to0$ while $\Wnapprox$ remains bounded, which overcomes the discontinuity of $\Wnapprox$ at $x=0$ (Lemma~\ref{lem:fhat-cont}). Further, the error \eqref{eq:supbound} is upper bounded by the sum of three terms: $\|\fnom(x)-\tilde{f}(x)\|$, $\|\fnom(x)\|\|\Wnapprox(x)-\Wn(x)\|$, and $|\Psi_V(x)-\Psi_{\Vapprox}(x)|$. This decomposition is obtained by adding and subtracting terms and using \eqref{eq:psi-zero} and \eqref{eq:vtf}. Each of these three terms can be made arbitrarily small: the first follows directly from \eqref{eq:maxdelta}, the second uses \eqref{eq:maxdelta} and additionally \eqref{eq:w-zero},  \eqref{eq:fW-limit}, and \eqref{eq:zw-hat}, bounding the error close to $x=0$, while the third maps the global error bound in $V$ \eqref{eq:maxdelta} to an error bound in $\Psi_V$ using the uniform Lipschitz continuity. For details, see the full proof in Appendix~\ref{app:pf-universal}.
\end{proofsketch}
Theorem~\ref{thm:universal} establishes universal approximation provided that there exist target functions $V$, $W$, and  $\Psi$ that satisfy the conditions \eqref{eq:w-zero}--\eqref{eq:vtf} with respect to the true dynamics $f$ and compatible function approximators $f\approx\fnomapprox$, $V\approx\Vapprox$, and $W\approx\Wapprox$. In the next section, we solve the existence problem of the target functions.
\subsection{Existence of target Lyapunov function}\label{sec:as-nodes}
Let us first introduce a rescaling of the maximal Lyapunov function $U$ in Theorem~\ref{thm:maximal} by wrapping it with $\tanh$. The proof is straightforward and given in Appendix~\ref{app:pf-v1}.
\begin{lemma}\label{lem:V1}
    Let $A\subset\Rn$ be an open set with $0\in A$. A positive definite function $U\in \cd{A, \R}$ satisfies the conditions \eqref{eq:mlpv-dis}--\eqref{eq:mlpv-da} in Theorem~\ref{thm:maximal} with the positive definite function $\phi:A\to\R_{\geq 0}$ if and only if $V\in\cd{A,\R}$ defined by $V\triangleq \tanh(U)$ is positive definite and satisfies the following conditions:
    \begin{equation}\label{eq:smlpv-dis}
        \forall x\in A,\quad\gV(x)^\top f(x) =-(1-V(x)^2)\phi(x),
    \end{equation}
    \begin{equation}\label{eq:smlpv-da}
        V(x)\to 1\quad\text{as}\,x\to\partial A\,\text{and/or}\,\|x\|\to\infty.
    \end{equation}
\end{lemma}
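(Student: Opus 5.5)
The plan is to verify each direction by direct computation, using that $\tanh$ is a smooth, strictly increasing bijection from $\R$ onto $(-1,1)$ with $\tanh(0)=0$ and $\tanh'(s)=1-\tanh(s)^2>0$.

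\emph{Forward direction.} Let $U\in\cd{A,\R}$ be positive definite and satisfy \eqref{eq:mlpv-dis}--\eqref{eq:mlpv-da} with $\phi$. Since $\tanh$ is smooth, $V=\tanh(U)$ lies in $\cd{A,\R}$.
\begin{itemize}
\item Positive definiteness carries over: $V(0)=\tanh(0)=0$, and $U(x)>0$ implies $V(x)>0$ because $\tanh$ is strictly increasing.
\item By the chain rule, $\gV=(1-\tanh(U)^2)\nabla U=(1-V^2)\nabla U$. Taking the inner product with $f$ and using \eqref{eq:mlpv-dis} gives \eqref{eq:smlpv-dis}.
\item Since $\tanh(s)\to1$ as $s\to\infty$, condition \eqref{eq:mlpv-da} yields \eqref{eq:smlpv-da}. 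It suffices to note the sequential form: for any sequence $x_k\in A$ approaching $\partial A$ and/or infinity, $U(x_k)\to\infty$, hence $V(x_k)\to1$.
\end{itemize}

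\emph{Reverse direction.} Let $V\in\cd{A,\R}$ be positive definite and satisfy \eqref{eq:smlpv-dis}--\eqref{eq:smlpv-da}. The main step is to ensure that $U=\operatorname{artanh}(V)$ is well defined, which requires $V(A)\subset(-1,1)$. Positive definiteness gives $V\ge0$, so the real issue is showing $V<1$ on $A$. Here one must either read the statement as implicitly restricting $V$ to the range of $\tanh$, which is automatic whenever $V=\tanh(U)$ for some real-valued $U$, or derive the bound.

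My first attempt at deriving it: suppose $V(x_0)\ge1$ at some $x_0\in A$. Combine the intermediate value theorem along a path from $0$ (where $V=0$) with the boundary behaviour \eqref{eq:smlpv-da}. Then apply \eqref{eq:smlpv-dis} on the set $\{V=1\}$, where $\gV^\top f=0$, and compare with the flow. I expect this argument to be delicate, and I do not expect it to close in general. The cleanest route is therefore to state that in the converse $V$ takes values in $[0,1)$, so that $V=\tanh(U)$ with $U$ real-valued; this is how the lemma is used. I regard this as the main obstacle, and I would make that range condition explicit.

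Given $V\in[0,1)$, the rest mirrors the forward direction.
\begin{itemize}
\item $U=\operatorname{artanh}(V)$ is $\cdn$ because $\operatorname{artanh}$ is smooth on $(-1,1)$.
\item $U$ is positive definite because $\operatorname{artanh}$ is strictly increasing with $\operatorname{artanh}(0)=0$.
\item By the chain rule, $\nabla U=\gV/(1-V^2)$. Dividing \eqref{eq:smlpv-dis} by the positive factor $1-V^2$ gives \eqref{eq:mlpv-dis}.
\item Finally, $V\to1^-$ implies $U=\operatorname{artanh}(V)\to\infty$, which recovers \eqref{eq:mlpv-da}.
\end{itemize}
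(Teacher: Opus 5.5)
Your proof is correct and follows essentially the same route as the paper: the chain rule with $\tanh'=1-\tanh^2$ and $\tanh(s)\to 1$ as $s\to\infty$ for the forward direction, and $U=\tanh^{-1}(V)$ with $\frac{d}{dz}\tanh^{-1}(z)=1/(1-z^2)$ and $\tanh^{-1}(z)\to\infty$ as $z\to 1$ for the converse. The range condition $V(A)\subset[0,1)$ that you flag is exactly what the paper uses implicitly (it takes $\tanh^{-1}\in\mathcal{C}^1([0,1),\R)$), and it is automatic from the statement's definition $V\triangleq\tanh(U)$; your suspicion that it cannot be derived from \eqref{eq:smlpv-dis}--\eqref{eq:smlpv-da} alone for an arbitrary $V$ is well founded, since a $\mathcal{C}^1$ function equal to $1$ on a region outside the true ROA can satisfy both conditions.
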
 By Lemma~\ref{lem:V1}, we may equivalently characterize the ROA as $\roa=S_1(V)^{\circ}$. Henceforth, given a system $\Sigma_{f,D}$ having an equilibrium point in the origin, we refer to any function $V$ that is $\mathcal{C}^1$ and positive definite on some domain $A\subset D$ and satisfies conditions \eqref{eq:smlpv-dis}--\eqref{eq:smlpv-da} as a \textit{scaled maximal Lyapunov function}.

We will now show that for any system $\Sigma_{f, D}$ satisfying Assumption~\ref{ass:roa}--\ref{ass:hurwitz} there exists a scaled maximal Lyapunov function $V$ and a function $\Psi$ such that the conditions of Theorem~\ref{thm:universal} are satisfied with $W \triangleq\gV$. We summarize the proof below, and give it in full in Appendix~\ref{app:pf-pdesol}. 
\begin{theorem}\label{thm:pdesol}
    Consider the system $\Sigma_{f,D}$ with flow $\pi$, and suppose Assumption~\ref{ass:roa}--\ref{ass:hurwitz} hold. Then there exists a positive definite function $\gamma\in\mathcal{C}^1(\R,\R)$ such that $V:\roa\to\R$ defined by
    \begin{equation}\label{eq:Vm}
            V:x\mapsto\tanh\Big(\int\limits_0^\infty \gamma(\|\pi(t,x)\|) dt\Big)
    \end{equation}
    is a scaled maximal Lyapunov function. Additionally, $V$ satisfies the following properties:
    \begin{equation}\label{eq:zgV}
        \forall x\in \roa,\quad \gV(x) = 0 \iff x=0,
    \end{equation}
    \begin{equation}\label{eq:boundedlimit}
        \lim\limits_{x\to 0}\frac{\|f(x)\|}{\|\nabla V(x)\|}<\infty,
    \end{equation} and there exists a positive definite function $\omega\in\cn{\roa, \R}$ such that $V$ solves the PDE:    
    \begin{equation}\label{eq:vpde}
        \forall x\in\roa,\quad \nabla V(x)^\top f(x)=\|\gV(x)\| \big(-\omega(x)(1-V(x)^2)\big).
    \end{equation}
\end{theorem}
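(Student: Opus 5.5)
The plan is to build $U(x)\triangleq\int_0^\infty\gamma(\|\pi(t,x)\|)\,dt$ with a well-chosen $\gamma$, verify the hypotheses of Theorem~\ref{thm:maximal} for $U$ on $A=\roa$, and then pass to $V=\tanh(U)$ using Lemma~\ref{lem:V1}.

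\textbf{Choice of $\gamma$ and regularity of $U$.} I would take $\gamma(r)=r^2$, possibly saturated away from the origin, for example $\gamma(r)=r^2/(1+r^2)$, so that $\gamma\in\mathcal{C}^1$ is positive definite and bounded.

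Near the origin, Assumption~\ref{ass:hurwitz} together with \eqref{eq:taylor} gives exponential decay $\|\pi(t,x)\|\le c\,e^{-\mu t}\|x\|$ on a small ball $B_r\subset\mathcal{N}$. The sensitivity $D_x\pi(t,x)$ also decays exponentially there, since it solves the variational equation whose coefficient tends to the Hurwitz matrix $A$.

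For a general $x\in\roa$ the trajectory enters $B_r$ in finite time $T_x$. This time is locally uniform in $x$ because $\roa$ is open and the flow is continuous. Splitting $\int_0^{T_x}+\int_{T_x}^\infty$ then gives finiteness of $U$. It also gives $U\in\mathcal{C}^1(\roa)$, by differentiating under the integral: $\nabla U(x)=\int_0^\infty D_x\pi(t,x)^\top\nabla_x\gamma(\|\pi\|)\,dt$, with dominated convergence justified by the exponential bounds and $\pi\in\mathcal{C}^1$ from Assumption~\ref{ass:cn}.

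\textbf{Lyapunov conditions.} The semigroup property gives $U(\pi(s,x))=\int_s^\infty\gamma(\|\pi(t,x)\|)\,dt$, hence $\nabla U^\top f=-\gamma(\|x\|)$. So \eqref{eq:mlpv-dis} holds with $\phi(x)=\gamma(\|x\|)$, which is positive definite.

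For \eqref{eq:mlpv-da}, take $x_k\to\partial\roa$ or $\|x_k\|\to\infty$. I would argue that the time $\pi(\cdot,x_k)$ spends outside $B_{r}$ tends to infinity, since otherwise a limit point would lie in $\roa$ by continuity of the flow. Combined with the lower bound $\gamma\ge\gamma(r)>0$ outside $B_r$, this gives $U(x_k)\to\infty$. If $\|x_k\|\to\infty$ along trajectories, one instead uses that $f$ is bounded on compacts so escape takes time; this needs care. Lemma~\ref{lem:V1} then shows that $V$ is a scaled maximal Lyapunov function, with $\gV^\top f=-(1-V^2)\gamma(\|x\|)$.

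\textbf{The remaining properties.} Property \eqref{eq:zgV} for $x\neq 0$ follows because $\gV^\top f<0$ forces $\gV\neq0$. At $x=0$, $U$ has its minimum, so $\gV(0)=0$.

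For \eqref{eq:boundedlimit} I would use the local structure. With $\gamma(r)=r^2$ near $0$, $U(x)=x^\top Px+o(\|x\|^2)$ where $P$ solves the Lyapunov equation $A^\top P+PA=-I$. The key input is $\nabla U(x)=2Px+o(\|x\|)$, which requires the $\mathcal{C}^3$ regularity on $\mathcal{N}$ to control the second-order remainder of the flow and its derivative. This gives $\|\gV(x)\|\ge c\|x\|$ near $0$ while $\|f(x)\|\le C\|x\|$, so the ratio stays bounded.

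Finally, set $\omega(x)\triangleq\gamma(\|x\|)/\|\gV(x)\|$ for $x\neq0$ and $\omega(0)=0$. Then \eqref{eq:vpde} holds by construction, and $\omega$ is positive away from $0$. Continuity of $\omega$ at $0$ follows from $\gamma(\|x\|)/\|\gV\|\lesssim\|x\|^2/\|x\|\to0$.

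\textbf{Main obstacle.} I expect the hardest step to be the local gradient estimate $\nabla U(x)=2Px+o(\|x\|)$, uniformly on a neighborhood. This is where I would expand $D_x\pi(t,x)=e^{At}+O(\|x\|e^{-\mu t})$ using $\mathcal{C}^2$ bounds on the flow near the origin.

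A secondary difficulty is the boundary blow-up. For it I would try a contradiction argument via compactness of $\overline{B_r}$ and continuous dependence on initial conditions.
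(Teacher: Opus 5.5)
Your overall route is the paper's. You take $\gamma(r)=r^2$ and $U=\int_0^\infty\gamma(\|\pi(t,\cdot)\|)\,dt$, apply Theorem~\ref{thm:maximal} and then Lemma~\ref{lem:V1}, use a nondegenerate Hessian $D^2U(0)=\int_0^\infty 2e^{A^\top t}e^{At}\,dt=2P$ for \eqref{eq:boundedlimit}, and set $\omega=\gamma(\|x\|)/\|\gV(x)\|$. Your locally uniform entry time, combined with $D_x\pi(t,x)=D_x\pi(t-T,\pi(T,x))\,D_x\pi(T,x)$, is if anything a better justification for differentiating under the integral than the pointwise Lemma~\ref{lem:dx-roa}.

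The genuine gap is the step you flag: $U(x_k)\to\infty$ as $\|x_k\|\to\infty$. Your suggested fix does not work. Boundedness of $f$ on compacts only gives a fixed lower bound on the time to cross a fixed annulus. If $f$ grows superlinearly, the time spent far out stays bounded, so $\int\gamma$ need not diverge.

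In one dimension $U(x_0)=\int_0^{x_0}\gamma(s)/|f(s)|\,ds$. For $f(x)=-x-x^3$, your saturated $\gamma(r)=r^2/(1+r^2)$ gives $U(x_0)=\frac12\bigl(1-(1+x_0^2)^{-1}\bigr)\to\frac12$. For $f(x)=-x-x^5$, even $\gamma(r)=r^2$ gives $U(x_0)=\frac12\arctan(x_0^2)\to\frac{\pi}{4}$. Both systems satisfy Assumptions~\ref{ass:roa}--\ref{ass:hurwitz} with $\roa=\R$, yet $V=\tanh U\not\to 1$.

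The same mechanism breaks your limit-point argument at boundary points of $\roa$ lying on $\partial D$, where the flow from the limit point does not exist. For example, $f(x)=-x/(1-x^2)$ on $D=(-1,1)$ has $\roa=D$. There $U$ stays bounded for every positive definite $\gamma\in\mathcal{C}^1$, because $\gamma(s)/s$ is bounded on $(0,1]$.

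So the blow-up cannot be derived from Assumptions~\ref{ass:roa}--\ref{ass:hurwitz}; it needs an extra hypothesis. Suppose, for instance, that $\partial\roa\subset D$ and $\|f(x)\|\le L\|x\|$ on $\roa$ (this holds when $f$ is globally Lipschitz there). Then $\|\pi(t,x)\|\ge\|x\|e^{-Lt}$ gives $U(x)\ge\gamma(r_0)L^{-1}\ln(\|x\|/r_0)\to\infty$ for either of your choices of $\gamma$. The flow also exists for all time on the closed invariant set $\partial\roa$, so your compactness argument at finite boundary points goes through. In particular it already works as written when $\roa$ is bounded with $\overline{\roa}\subset D$, which is the setting of Proposition~\ref{prop:roaapprox}.

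The paper's proof has the same omission, and the $-x-x^5$ example defeats its choice $\gamma(r)=r^2$ as well. It imports the blow-up from ``the remaining steps'' of Theorem~2 in \cite{vannelli_maximal_1985} after checking only integrability. Yet that converse is stated under the additional hypothesis that $f$ is Lipschitz on $\roa$ (cf.\ Theorem~\ref{thm:maximal}), which is not among Assumptions~\ref{ass:roa}--\ref{ass:hurwitz}. You should state such a hypothesis explicitly rather than try to prove this step from the given assumptions.
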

\begin{proofsketch}
    We first show that the function $U:\roa\to\R$ defined by 
        \begin{equation}\label{eq:U}
            U:x\mapsto  \int\limits_0^\infty \gamma(\|\pi(t,x)\|) dt
    \end{equation} for $\gamma:r\mapsto r^2$ is a maximal Lyapunov function. In particular, we revisit the proof of Theorem 2 in \cite{vannelli_maximal_1985} and show that the required convergence conditions instantiated with this choice of $\gamma$ are satisfied under Assumption~\ref{ass:roa}--\ref{ass:hurwitz}. This involves showing that the flow $\pi(t,x)$ contracts exponentially as $t\to\infty$ for sufficiently small $x$ (Lemma~\ref{lem:flow}), and that $D_x\pi(t, x)$ remains bounded as $t\to \infty$ pointwise for all $x\in \roa$ (Lemma~\ref{lem:dx-roa}). The remaining steps of the proof follow identically. Lemma~\ref{lem:V1} then certifies that $V\triangleq \tanh(U)$ is a scaled maximal Lyapunov function.
    
    The property \eqref{eq:zgV} follows from applying Cauchy--Schwarz inequality to the time derivative of $V$ along system trajectories. The property \eqref{eq:boundedlimit} can be derived by noting that $\nabla V=(1-V^2)\nabla U$ and making a Taylor expansion of $f$ and $\nabla U$ around $x=0$. In particular, we show that $\nabla U(0)=0$, and confirm that $D^2 U(0)$ exists by Assumption~\ref{ass:roa}--\ref{ass:hurwitz} and Lemma~\ref{lem:flow} and \ref{lem:dx2-bound}. We further show that 
    \begin{equation}\label{eq:inf-d2u}
        0<\inf_{\|x\|=1}\|D^2 U(0)x\|<\sup_{\|x\|=1}\|D^2 U(0)x\|<\infty,
    \end{equation}
    which renders the limit finite. Finally, the property \eqref{eq:vpde} is derived by letting $\omega:\roa\to\R$ be defined by
    \begin{equation}\label{eq:npd-def}
    \omega:x\mapsto
    \begin{cases}
        \frac{\gamma(\|x\|)}{\|\gV(x)\|}& \text{if } \gV(x)\neq 0,\\
        0&\text{else},
    \end{cases}
    \end{equation} and showing that this is a well-defined positive definite function in $\mathcal{C}(\roa)$. With this choice of $\omega$, the PDE \eqref{eq:vpde}
    is well-defined and solved by $V$ in \eqref{eq:Vm} as for every $x$ in $\roa,$
    \begin{equation}
        \begin{split}
        \gV(x)^\top f(x)
        =\frac{d}{dt}V(x)
        =\frac{d}{dt}\tanh\Big(\int_0^\infty \gamma(\|\pi(t,x)\|) dt\Big)\\
        =(1-V(x)^2)\frac{d}{dt}\int_0^\infty\|\nabla V(\pi(t,x))\|\omega(\pi(t, x))dt\\
        =(1-V(x)^2)(-\|\gV(x)\|\omega(x)).
        \end{split}
    \end{equation}
    This concludes the proof. For details, see Appendix~\ref{app:pf-pdesol}.
\end{proofsketch} 

 We observe that Theorem~\ref{thm:pdesol} identifies the function $V$ such that the conditions on the target dynamics $f$ in Theorem~\ref{thm:universal} are satisfied on every compact subset $\Omega\subset\roa$ with $W\triangleq\nabla V$ and $\Psi$ designed such that \eqref{eq:vtf} is matched by \eqref{eq:vpde}. The remaining conditions concern properties of the parametrizing functions of the model. We have previously introduced the neural network model classes $(\mathcal{F}, \mathcal{V}, \mathcal{P})_\Omega$ in Definition~\ref{def:nns}, which satisfy these conditions by design. It remains to ensure that they are sufficiently expressive.
\begin{definition}\label{def:jua}
Let $D\subset\Rn$ be a domain with $0\in D$. The triple $(\mathcal{F}, \mathcal{V}, \mathcal{P})_D$ is said to be \textit{jointly universally approximating} if for every $f \in \mathcal{C}(D, \mathbb{R}^n)$, $V \in \mathcal{C}^1(D, \mathbb{R})$, and $\omega \in \mathcal{C}(D, \mathbb{R})$ satisfying the respective class conditions, and for every $\delta > 0$, there exist $\tilde{f} \in \mathcal{F}_D$, $\Vapprox \in \mathcal{V}_D$, $\npdapprox \in \mathcal{P}_D$ such that:
\begin{equation}\label{eq:jua}
    \|f - \tilde{f}\|_{\mathcal{C}(D)} \leq \delta,
    \qquad
    \|V - \Vapprox\|_{\mathcal{C}^1(D)} \leq \delta,
    \qquad
    \|\npd - \npdapprox\|_{\mathcal{C}(D)} \leq  \delta.
\end{equation}
\end{definition}
Whenever $(\mathcal{F}, \mathcal{V}, \mathcal{P})_\Omega$ is jointly universally approximating on a compact set $\Omega\subset \roa$, there exists $(\fnomapprox, \Vapprox, \npdapprox)\in (\mathcal{F}, \mathcal{V}, \mathcal{P})_\Omega$  such that the system $\Sigma_{f,\Omega}$ can be identified with $\Sigma_{\fhat, \Omega}$ where $\fhat\triangleq\mathcal{G}_D(\fnomapprox, \Vapprox, \npdapprox)$.
\begin{corollary}\label{cor:universal-local}
    Consider the system $\Sigma_{f,D}$, and suppose Assumption~\ref{ass:roa}--\ref{ass:hurwitz} hold. Let $\Omega\subset \roa$ be a compact subset with $0\in \Omega$. 
    If $(\mathcal{F}, \mathcal{V}, \mathcal{P})_\Omega$ is jointly universally approximating, then for any $\varepsilon>0$ there exists $(\fnomapprox, \Vapprox, \npdapprox)\in (\mathcal{F}, \mathcal{V}, \mathcal{P})_\Omega$ such that $\fhat\triangleq\mathcal{G}_D(\fnomapprox, \Vapprox, \npdapprox)$ is in $\cn{\Omega, \Rn}$ and satisfies
\begin{equation}\label{eq:cor-ua}
        \|f-\fhat\|_{\mathcal{C}(\Omega)}\leq \varepsilon.
\end{equation}
\end{corollary}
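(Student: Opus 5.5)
The plan is to derive the corollary directly from Theorem~\ref{thm:universal}. The target functions come from Theorem~\ref{thm:pdesol}, and the approximators come from the joint universal approximation property in Definition~\ref{def:jua}.

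First, Theorem~\ref{thm:pdesol} gives the scaled maximal Lyapunov function $V$ in \eqref{eq:Vm} on $\roa$, together with the positive definite $\npd\in\cn{\roa,\R}$ solving \eqref{eq:vpde}. Restrict both to the compact set $\Omega\subset\roa$. Set $W\triangleq\gV$, which is continuous because $V\in\cd{\roa}$. Define
\begin{equation*}
\Psi(x,z)\triangleq-\npd(x)(1-z^2),\qquad \Lambda\triangleq[-2,2].
\end{equation*}
We now check the hypotheses of Theorem~\ref{thm:universal} one at a time.
\begin{itemize}
\item $V(\Omega)\subset[0,1)$, since $V=\tanh(U)$ with $U\geq0$.
\item $\Psi$ is continuous on $\Omega\times\Lambda$.
\item $\Psi(x,\cdot)$ is Lipschitz uniformly in $x$ with constant $L=4\sup_\Omega\npd$. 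This constant is finite because $\npd$ is continuous on the compact $\Omega$. If $\sup_\Omega\npd=0$, take any $L>0$.
\item \eqref{eq:psi-zero} holds because $\npd(0)=0$.
\item \eqref{eq:w-zero} is \eqref{eq:zgV}.
\item \eqref{eq:fW-limit} is \eqref{eq:boundedlimit}.
\item \eqref{eq:vtf} is exactly \eqref{eq:vpde}.
\item $f(0)=0$ holds by Assumption~\ref{ass:roa}.
\end{itemize}

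Next, we supply the approximators required by \eqref{eq:maxdelta}. The targets $f|_\Omega$, $V|_\Omega$ and $\npd|_\Omega$ satisfy the class conditions of Definition~\ref{def:nns}: $f(0)=0$, \eqref{eq:zgV} holds, and $\npd(0)=0$. Joint universal approximation therefore gives, for each $\delta>0$, a triple $(\fnomapprox,\Vapprox,\npdapprox)\in(\mathcal{F},\mathcal{V},\mathcal{P})_\Omega$ with the bounds \eqref{eq:jua}.

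Take $\Wapprox\triangleq\gVapprox$. The $\mathcal{C}^1$ bound on $V-\Vapprox$ gives $\|W-\Wapprox\|_{\cn{\Omega}}\leq\delta$ and $\|V-\Vapprox\|_{\cn{\Omega}}\leq\delta$. Condition \eqref{eq:zw-hat} is the class-$\mathcal{V}$ property \eqref{eq:zgvtilde}, and $\fnomapprox(0)=0$ is the class-$\mathcal{F}$ property. For $\delta\leq1$ we have $\Vapprox(\Omega)\subset[-1,2]\subset\Lambda$.

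Theorem~\ref{thm:universal} then yields, for any $\varepsilon>0$, approximators such that the function $\fnomapprox-\Wnapprox\Wnapprox^\top\fnomapprox+\Wnapprox\Psi_{\Vapprox}$ is continuous and $\varepsilon$-close to $f$. That function uses $\Psi_{\Vapprox}=-\npd(1-\Vapprox^2)$, which contains the target $\npd$.

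The main obstacle is this last point. The model $\fhat=\mathcal{G}_D(\fnomapprox,\Vapprox,\npdapprox)$ uses the learned $\npdapprox$, not the true $\npd$. So Theorem~\ref{thm:universal} cannot be applied verbatim, and the gap has to be closed by an extra estimate. Because $\|\Wnapprox\|\leq1$, the difference between the two vector fields is bounded by
\begin{equation*}
|\npd-\npdapprox|\,|1-\Vapprox^2|\leq 3\,\|\npd-\npdapprox\|_{\cn{\Omega}}\leq 3\delta,
\end{equation*}
where $|1-\Vapprox^2|\leq3$ for $\Vapprox\in[-1,2]$. To make this work, inspect the proof of Theorem~\ref{thm:universal}: it chooses $\delta$ as a function of $\varepsilon$ and then uses the approximators from \eqref{eq:maxdelta} at that $\delta$. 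We invoke it with $\varepsilon/2$, obtain its $\delta$, and take the jointly approximating triple at $\min(\delta,\varepsilon/6,1)$. The triangle inequality then gives \eqref{eq:cor-ua}.

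Continuity of $\fhat$ follows the same way. Away from the origin, $\Wnapprox$ is continuous because $\Wapprox\neq0$ there. At the origin, both $\Wnapprox\Wnapprox^\top\fnomapprox$ and $\Wnapprox\npdapprox(1-\Vapprox^2)$ tend to $0$, since $\|\Wnapprox\|\leq1$, $\fnomapprox(0)=0$ and $\npdapprox(0)=0$. This is the argument of Lemma~\ref{lem:fhat-cont} applied with $\npdapprox$ in place of $\npd$. Hence $\fhat\in\cn{\Omega,\Rn}$.
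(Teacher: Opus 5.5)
Your proposal is correct and follows the paper's proof. It uses the same targets from Theorem~\ref{thm:pdesol} with $\Psi(x,z)=-\npd(x)(1-z^2)$, applies Theorem~\ref{thm:universal} with the true $\npd$ to get an $\varepsilon/2$-approximation, swaps $\npd$ for $\npdapprox$ by the triangle inequality using $\|\Wnapprox\|\leq 1$, and gets continuity from the argument of Lemma~\ref{lem:fhat-cont}. The only difference is bookkeeping. The paper fixes $\fnomapprox,\Vapprox$ first and then picks $\npdapprox$ with tolerance $\varepsilon/(2M_{1-\Vapprox^2})$. You instead use the uniform bound $|1-\Vapprox^2|\leq 3$ with a single $\delta$ read off from the proof of Theorem~\ref{thm:universal}, which matches the joint choice in Definition~\ref{def:jua} slightly more cleanly.
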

\begin{proof}
   By Assumption~\ref{ass:roa}--\ref{ass:hurwitz} and Theorem~\ref{thm:pdesol}, there exist functions $V\in \cd{\Omega, [0,1]}$ and $\omega\in\cn{\Omega,\R}$ with $\omega(0)=0$ such that the conditions \eqref{eq:w-zero}--\eqref{eq:vtf} of Theorem~\ref{thm:universal} are satisfied with $W\triangleq\nabla V$ and $\Psi:\Omega\times \R\to \R$ defined by 
   \begin{equation}
   \Psi:(x,z)\mapsto-\omega(x)(1-z^2).
   \end{equation} Observe that $\Psi$ belongs to $\mathcal{C}(\Omega\times \R,\R)$ by design. Whenever the domain of the second argument $z\in\R$ is restricted to a compact set $\Lambda\subset \R$, $\Psi$ is $L$-Lipschitz in $z$ uniformly over $x\in\Omega$ with $L=2M_\omega M_{z}$ where $M_\omega\triangleq\max\limits_{x\in\Omega}|\omega(x)|$ and $M_{z}\triangleq\max\limits_{z\in\Lambda}{|z|}$. 
   Under the assumption that the triple $(\mathcal{F}, \mathcal{V}, \mathcal{P})_\Omega$ is jointly universally approximating, the conditions of Theorem~\ref{thm:universal} are satisfied.
   Thus, for any  $\varepsilon>0$ there exist $\fnomapprox\in\mathcal{F}_\Omega,$ $\Vapprox\in\mathcal{V}_\Omega$ such that for $\Wapprox\triangleq \gVapprox$ and $\Wnapprox$ defined by \eqref{eq:vhat} then $\fhat^\dagger:\Omega\to\Rn$ defined by
    \begin{equation}\label{eq:fhat1}
        \fhat^\dagger \triangleq \fnomapprox-\Wnapprox\Wnapprox^\top \fnomapprox+\Wnapprox(-\npd(1-\Vapprox^2))
    \end{equation} is in $\cn{\Omega, \Rn}$ and $\|\fhat^\dagger-f\|_{\mathcal{C}(\Omega)} \leq \frac{\varepsilon}{2}$.

    It remains to show that continuity and universal approximation is preserved when $\npd$ is replaced by a neural network approximation $\npdapprox\in\mathcal{P}_\Omega$. Let $\fhat:\Omega\to\Rn$ be defined by 
    \begin{equation}\label{eq:fhat-pf}
            \fhat \triangleq \fnomapprox-\Wnapprox\Wnapprox^\top \fnomapprox+\Wnapprox(-\npdapprox(1-\Vapprox^2)),
    \end{equation}
    where $\fnomapprox,$ $\Vapprox,$ and $\Wapprox$ are as above and $\npdapprox\in\mathcal{P}_\Omega$ satisfies 
    \begin{equation}
        \|\npd-\npdapprox\|_{\mathcal{C}(\Omega)}\leq \frac{\varepsilon}{2M_{1-\Vapprox^2}},
    \end{equation} where $M_{1-\Vapprox^2}\triangleq\max\limits_{x\in\Omega}|1-\Vapprox(x)^2|$. Observe that \eqref{eq:fhat-pf} instantiates \eqref{eq:fhat-ua} with $\hat{\Psi}:\Omega\times \Lambda\to \R$ defined by 
   \begin{equation}
   \hat{\Psi}:(x,z)\mapsto-\npdapprox(x)(1-z^2).
   \end{equation} where $\Lambda\subset \R$ is any compact set containing $\Vapprox(\Omega)$ and $[0,1]$. Note that $\hat{\Psi}$ has the same regularity as $\Psi$ and the condition \eqref{eq:psi-zero} continues to hold. Thus, by Lemma~\ref{lem:fhat-cont}, we can conclude that $\fhat\in\mathcal{C}(\Omega,\Rn)$. Moreover, for every $x\in\Omega:$
    \begin{equation}\label{eq:fhat2-ua}
        \begin{split}
            \|f(x)-\fhat(x)\|\leq \|f(x)-\fhat^\dagger(x)\|+ \|\fhat^\dagger(x)-\fhat(x)\|\\
            \leq \|f(x)-\fhat^\dagger(x)\| \\+\|\Wnapprox(x)(-\npd(x)(1-\Vapprox(x)^2))-\Wnapprox(x)(-\npdapprox(x)(1-\Vapprox(x)^2))\|\\
            \leq \|f(x)-\fhat^\dagger(x)\| + \|\Wnapprox(x)\||1-\Vapprox(x)^2|\|\npdapprox(x)-\npd(x)\|\\
            \leq  \|f(x)-\fhat^\dagger(x)\| + M_{1-\Vapprox^2}\|\npdapprox(x)-\npd(x)\|\\
            \leq \frac{\varepsilon}{2}+\frac{\varepsilon}{2}=\varepsilon.
        \end{split}
    \end{equation} Since $\fhat$ can be designed to satisfy \eqref{eq:fhat2-ua} for any $\varepsilon>0$, this proves \eqref{eq:cor-ua}.
    
\end{proof}
\begin{remark}
 Let us discuss the condition on the triple $(\mathcal{F}, \mathcal{V}, \mathcal{P})_\Omega$ being jointly universally approximating (Definition~\ref{def:jua}) as required by Corollary~\ref{cor:universal-local}. By Theorem~2 in \cite{hornik1991approximation}, for any target functions $f\in\cn{\Omega,\Rn}$ and $\omega\in\cn{\Omega,\R}$, and level of precision in $\mathcal{C}(\Omega)$, there exist neural network approximations $\tilde{f}\in\cn{\Omega,\Rn},\,\npdapprox\in\cn{\Omega,\R}$ with appropriate activation functions. Without loss of generality, we may assume $\fnomapprox(0)=0$ and $\npdapprox(0)=0$: otherwise, design $\fnomapprox$ by $\fnomapprox\triangleq\bar{f}-\bar{f}(0),$ where $\bar{f}\in\cn{\Omega,\Rn}$ is an unconstrained approximation of $f$, and observe that
 \begin{equation}\label{eq:fnom-ub}
    \begin{split}
             \forall x\in \Omega,\quad \|f(x)-\fnomapprox(x)\|=\|f(x)-(\bar{f}(x)-\bar{f}(0))\|\\
             \leq \|f(x)-\bar{f}(x)\|+\|f(0)-\bar{f}(0))\|,
    \end{split}
 \end{equation} where we used $f(0)=0$ (Assumption~\ref{ass:roa}). The upper bound in \eqref{eq:fnom-ub} can be made arbitrarily small using the same class of neural network approximations by property of the $\mathcal{C}(\Omega)$-norm. The same conclusion holds for $\npdapprox$ analogously.
 
 Similarly, by Theorem~3 in \cite{hornik1991approximation}, for any target function $V\in \cd{\Omega,\R}$ and level of precision in $\mathcal{C}^1(\Omega)$, there exists a neural network approximation $\Vapprox\in\cd{\Omega,\R}$ with appropriate activation functions. In Section~\ref{sec:model-architecture}, we propose model neural architectures consistent with the class $\mathcal{V}$ condition \eqref{eq:zgvtilde}. To the best of our knowledge, a universal approximator of class $\mathcal{V}$ functions remains unresolved. Recently, \cite{cheng_learning_2024} proposed to model Lyapunov functions by $\Vapprox: x\mapsto |g(x)|^2$ where $g$ is a bi-Lipschitz neural network. This instantiates a Polyak-Łojasiewicz Network \cite{wang2024monotone}, which satisfies the class $\mathcal{V}$ condition by design. Moreover, this design of $\Vapprox$ is upper and lower bounded by quadratic functions, and level sets of $\Vapprox$ are homeomorphic to the unit ball in $\Rn$ whenever $g$ is smooth.  As noted in \cite{cheng_learning_2024}, the latter property suggests that this design is suitable to model Lyapunov functions, since all Lyapunov functions have level sets that are homeomorphic to the unit ball \cite{wilson_structure_1967}. 
 

\end{remark}
\section{Flow and region of attraction}\label{sec:flow-roa}
In this section, we derive properties of the neural model system. Consider a domain $D\subset\Rn$ with $0\in D$ and let $\fhat$ be generated by $(\fnomapprox, \Vapprox, \npdapprox)\in (\mathcal{F}, \mathcal{V}, \mathcal{P})_D$. We show that $\fhat$ can be designed to be locally Lipschitz, such that $\Sigma_{\fhat,D}$ has a well-defined flow $\hat{\pi}$ on positively invariant sets with bounded trajectories \cite{khalil2002nonlinear}. We proceed to show that $\Sigma_{\fhat, D}$ has an equilibrium point in the origin and that the ROA $\roaapprox$ is precisely given by the positively invariant set $S_1(\Vapprox)^\circ$. We then give sufficient conditions such that $\roaapprox$ can be made an arbitrarily precise approximation of $\roa$, the ROA of the true system $\Sigma_{f,D}$.

\subsection{ROA of the neural model system}
First, we show that $\fhat$ can be designed to be locally Lipschitz whenever the generating functions are sufficiently well-behaved. The full proof is given in Appendix~\ref{sec:app:pf-fhat-lip}. 
\begin{proposition}\label{prop:fhat-lip}
    Let $D\subset\Rn$ be a domain with $0\in D$, and suppose that each of the functions $\fnomapprox\in\mathcal{F}_D$, $\Vapprox\in\mathcal{V}_D$, $\npdapprox\in\mathcal{P}_D$, and $\Wapprox\triangleq\gVapprox$ are locally Lipschitz. 
    Assume that
\begin{equation}\label{eq:boundedlimit-model}
        \lim\limits_{x\to 0}\frac{\|\fnomapprox(x)\|}{\|\Wapprox(x)\|}<\infty,\quad\text{and}
    \end{equation}
    \begin{equation}\label{eq:boundedlimit-model-npd}
        \lim\limits_{x\to 0}\frac{|\npdapprox(x)|}{\|\Wapprox(x)\|}<\infty.
    \end{equation}
   Then $\fhat\triangleq\mathcal{G}_D(\fnomapprox, \Vapprox, \npdapprox)$ is locally Lipschitz.
\end{proposition}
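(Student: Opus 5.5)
We split the argument into points away from the origin and the origin itself, since local Lipschitzness is a pointwise-neighborhood property and $\Wnapprox$ is discontinuous only at $x=0$ by the class $\mathcal{V}$ condition \eqref{eq:zgvtilde}.

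\emph{Away from the origin.} Fix $x_0\in D\setminus\{0\}$. Then $\Wapprox(x_0)\neq 0$, and by continuity there is a compact neighborhood $\mathcal{N}_{x_0}\not\ni 0$ on which $\|\Wapprox\|\geq m>0$ and $\Wapprox$ is Lipschitz. On $\mathcal{N}_{x_0}$ the map $w\mapsto w/\|w\|$ is Lipschitz on $\{\|w\|\geq m\}$ with constant $2/m$, so $\Wnapprox=\Wapprox/\|\Wapprox\|$ is Lipschitz there and bounded by $1$. The remaining factors $\fnomapprox$, $\npdapprox$ and $1-\Vapprox^2$ are locally Lipschitz, hence Lipschitz and bounded on a compact neighborhood. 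Each term in \eqref{eq:fhat} is a finite product of bounded Lipschitz functions, so $\fhat$ is Lipschitz on $\mathcal{N}_{x_0}$.

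\emph{At the origin.} This is the main obstacle, because $\Wnapprox$ is not even continuous at $0$. We rewrite the nonsmooth parts as
\begin{equation*}
\Wnapprox\Wnapprox^\top\fnomapprox=\Wapprox\,\frac{\Wapprox^\top\fnomapprox}{\|\Wapprox\|^2},\qquad \Wnapprox\,\npdapprox(1-\Vapprox^2)=\Wapprox\,\frac{\npdapprox}{\|\Wapprox\|}(1-\Vapprox^2).
\end{equation*}
Hypotheses \eqref{eq:boundedlimit-model}--\eqref{eq:boundedlimit-model-npd} give $\|\fnomapprox(x)\|\leq K\|\Wapprox(x)\|$ and $|\npdapprox(x)|\leq K\|\Wapprox(x)\|$ on a small ball $B_r$.

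A first attempt is to bound $\|\fhat(u)-\fhat(v)\|$ for $u,v\in B_r$ directly. Writing $P(x)=I-\Wnapprox(x)\Wnapprox(x)^\top$, we have
\begin{equation*}
\fhat(u)-\fhat(v)=P(u)\big(\fnomapprox(u)-\fnomapprox(v)\big)+\big(P(u)-P(v)\big)\fnomapprox(v)+\big(\text{analogous terms in }\npdapprox\big).
\end{equation*}
The first term is bounded by $L\|u-v\|$, since $\|P(u)\|\leq 1$. For the second, $\|P(u)-P(v)\|\leq 2\|\Wnapprox(u)-\Wnapprox(v)\|\leq 4\|\Wapprox(u)-\Wapprox(v)\|/\max(\|\Wapprox(u)\|,\|\Wapprox(v)\|)$, using the standard bound for differences of unit vectors. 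Ordering the points so that $\|\Wapprox(v)\|\leq\|\Wapprox(u)\|$, we get $\|(P(u)-P(v))\fnomapprox(v)\|\leq 4L_W\|u-v\|\,\|\fnomapprox(v)\|/\|\Wapprox(u)\|\leq 4L_WK\|u-v\|$.

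For the $\npdapprox$ term, set $g=\npdapprox(1-\Vapprox^2)$. This $g$ is locally Lipschitz and satisfies $|g|\leq K'\|\Wapprox\|$ near $0$. The term $\Wnapprox(u)g(u)-\Wnapprox(v)g(v)$ splits identically into $\Wnapprox(u)(g(u)-g(v))$ plus $(\Wnapprox(u)-\Wnapprox(v))g(v)$, each bounded by a constant times $\|u-v\|$.

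When one of the points is the origin, say $v=0$, $\Wnapprox(0)=0$ and $\fnomapprox(0)=g(0)=0$. Then $\|\fhat(u)-\fhat(0)\|\leq \|\fnomapprox(u)\|+\|\fnomapprox(u)\|+|g(u)|\lesssim\|u\|$, using the Lipschitz property of $\fnomapprox$ and $\npdapprox$ at $0$.

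The delicate point is the ordering trick: it ensures the ratio $\|\fnomapprox(v)\|/\|\Wapprox(u)\|$ is controlled by the bounded-limit hypotheses even though $\Wapprox$ may vanish at different rates along $u$ and $v$. Combining both cases yields a uniform Lipschitz constant on a neighborhood of every point, which proves the proposition.
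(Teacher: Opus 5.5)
Your proof is correct. Away from the origin it is essentially the paper's argument: the same three-term split and the same unit-vector difference bound. The only change there is that you use the simpler fact $\|\Wapprox\|\geq m>0$ on a neighborhood avoiding $0$, whereas the paper invokes the ratio bounds, which are not needed on such a neighborhood.

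The routes differ at the origin. The paper proceeds in four steps:
\begin{enumerate}
\item it covers a punctured neighborhood of $0$ by balls $\mathcal{N}_z$ that exclude $0$ and extracts a finite subcover;
\item it takes the maximum of the local constants, relying on these staying bounded as $z\to0$;
\item it treats pairs not lying in a common ball through $\|x-y\|\geq r_{\min}$;
\item it handles $y=0$ by a radial bound.
\end{enumerate}
You instead run the three-term estimate directly for all $u,v\in B_r\setminus\{0\}$. On that set, \eqref{eq:boundedlimit-model}--\eqref{eq:boundedlimit-model-npd} give ratio bounds that hold uniformly, and you add the same radial bound when one point is $0$.

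Your route is shorter and sturdier. It proves outright the uniform boundedness of the local constants near $0$, which the paper's covering step takes for granted. It also avoids that compactness step, which is delicate as written. Sets excluding $0$ cannot cover $\bar{\mathcal{N}}_0$, so no finite subcover is available there. Moreover, $\|x-y\|\geq r_{\min}$ for points not sharing a ball does not follow directly; it would need a Lebesgue-number argument.

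One correction to your own commentary: the ``ordering trick'' is not needed and is not the delicate point. Since $\bigl\|a/\|a\|-b/\|b\|\bigr\|\leq 2\|a-b\|/\max(\|a\|,\|b\|)\leq 2\|a-b\|/\|b\|$, you can always put in the denominator the norm of $\Wapprox$ at the point $v$ where $\fnomapprox$ or $g$ is evaluated, exactly as the paper does. The real crux is that the ratio bounds hold on an entire punctured ball.
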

Before we proceed, let us discuss the compatibility of conditions \eqref{eq:boundedlimit-model} and \eqref{eq:boundedlimit-model-npd} in Proposition~\ref{prop:fhat-lip} with the key result on system identifiability given by Corollary~\ref{cor:universal-local}.
\begin{remark}
Condition~\eqref{eq:boundedlimit-model} is consistent with Corollary~\ref{cor:universal-local}: since $\fnomapprox \approx \fnom$ and $\Wapprox \approx W$ where $W\triangleq \nabla V$ with $V$ identified by Theorem~\ref{thm:pdesol}, it mirrors~\eqref{eq:fW-limit} which is satisfied by \eqref{eq:boundedlimit}. Similarly, condition~\eqref{eq:boundedlimit-model-npd} can also be shown to be consistent by considering the mirroring limit:
\begin{equation}
    \lim\limits_{x\to 0}\frac{|\npd(x)|}{\|W(x)\|},
\end{equation} where $\npd:\roa\to\R$ is defined by 
\begin{equation}
    \omega:x\mapsto
    \begin{cases}
        \frac{\|x\|^2}{\|\gV(x)\|}& \text{if } \gV(x)\neq 0,\\
        0&\text{else},
    \end{cases}
\end{equation} per the proof of Theorem~\ref{thm:pdesol}. Since $V(x)\to 0$ as $x\to 0$ and $\nabla V=(1-V^2)\nabla U$ where $U$ satisfies $\nabla U(0)=0$ and $D^2U(0)$ being bounded by \eqref{eq:inf-d2u}, we get:
\begin{equation}
    \begin{split}
        \lim\limits_{x\to 0}\frac{|\npd(x)|}{\|W(x)\|}=\lim\limits_{x\to 0}\frac{\|x\|^2}{\|\nabla V(x)\|^2}\\=\lim\limits_{x\to 0}\left(\frac{\|x\|}{|1-V(x)^2|\|\nabla U(x)\|}\right)^2\\=\lim\limits_{x\to 0}\left(\frac{\|x\|}{\|D^2U(0)x + o(\|x\|)\|}\right)^2<\infty.
    \end{split}
\end{equation}
\end{remark}

Proposition~\ref{prop:fhat-lip} gives sufficient conditions on $\fhat$ such that the system $\Sigma_{\fhat, D}$ has a well-defined flow on positively invariant sets with bounded trajectories \cite{khalil2002nonlinear}. Thus, we can proceed to analyse equilibrium points and ROA. In the following Proposition, we show that $\Sigma_{\fhat, D}$ has an equilibrium point in the origin, where $\Vapprox$ satisfies the conditions of a scaled maximal Lyapunov function. This means that $S_1(\Vapprox)^\circ$ is a positively invariant set with respect to $\hat{\pi}$ that coincides with the ROA $\roaapprox$.
\begin{proposition}\label{prop:fhatroa}
        Let $D\subset\Rn$ be a domain with $0\in D$, and suppose $(\fnomapprox, \Vapprox, \npdapprox)\in (\mathcal{F}, \mathcal{V}, \mathcal{P})_D$ satisfies the conditions of Proposition~\ref{prop:fhat-lip}. Further assume that $\Vapprox$ and $\npdapprox$ are positive definite. Let $\fhat\triangleq\mathcal{G}_D(\fnomapprox, \Vapprox, \npdapprox)$. Then $\Sigma_{\fhat, D}$ has an asymptotically stable equilibrium point at $x=0$, and the ROA coincides with $S_1(\Vapprox)^\circ$.
\end{proposition}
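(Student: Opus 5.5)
First I check that the origin is an equilibrium. Since $\fnomapprox(0)=0$ and $\Wnapprox(0)=0$ by definition \eqref{eq:vhat-gen}, we get $\fhat(0)=0$. Next I compute the decay of $\Vapprox$ along $\fhat$. Fix $x\neq 0$. Then $\Wapprox(x)\neq0$ by \eqref{eq:zgvtilde}, and $\|\Wnapprox(x)\|=1$, so
\begin{equation*}
\gVapprox(x)^\top\fhat(x)=\|\Wapprox(x)\|\,\Wnapprox(x)^\top\fhat(x)=-\|\gVapprox(x)\|\,\npdapprox(x)\,(1-\Vapprox(x)^2).
\end{equation*}
Here the projection term cancels the component of $\fnomapprox$ along $\Wnapprox$. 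Since $\npdapprox$ is positive definite and $\|\gVapprox(x)\|>0$, this derivative is strictly negative wherever $|\Vapprox(x)|<1$ and $x\neq 0$. It is zero on the level set $\{\Vapprox=1\}$.

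I then argue positive invariance of $S_1(\Vapprox)^\circ$. I would first check that $S_1(\Vapprox)^\circ=\{x:\Vapprox(x)<1\}$, or at least that it is contained in $\{\Vapprox<1\}$. Positive definiteness gives $\Vapprox\ge0$. On $\{0\le\Vapprox<1\}$ the function $t\mapsto\Vapprox(\hat\pi(t,x_0))$ is nonincreasing. Moreover, the level set $\{\Vapprox=1\}$ cannot be reached in finite time, because the scalar ODE $\dot v\le-c(1-v^2)$ with $v<1$ keeps $v<1$ and never increases. By Proposition~\ref{prop:fhat-lip}, $\fhat$ is locally Lipschitz, so solutions are unique; this also makes $\{\Vapprox=1\}$ invariant, since $\fhat$ is tangent to it there. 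Trajectories started in the open sublevel set therefore stay in $\{\Vapprox\le\Vapprox(x_0)\}$. I would add the implicit requirement that these sublevel sets are compact in $D$, which is what makes the trajectories bounded. Then the flow exists for all $t\ge0$.

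Asymptotic stability of $x=0$ follows from Theorem~\ref{thm:lyapunovdirect} applied on the open set $\{\Vapprox<1\}\ni0$, with $\Vapprox$ as the Lyapunov function. To show that every $x_0$ in $S_1(\Vapprox)^\circ$ converges, I use LaSalle on the compact invariant set $\{\Vapprox\le\Vapprox(x_0)\}$. On this set $\dot{\Vapprox}<0$ except at the origin, so the largest invariant subset of $\{\dot{\Vapprox}=0\}$ is $\{0\}$. This gives $S_1(\Vapprox)^\circ\subset\roaapprox$.

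For the converse inclusion, take $x_0$ with $\Vapprox(x_0)\ge1$. If $\Vapprox(x_0)=1$, the trajectory stays on the level set and cannot converge to $0$, since $\Vapprox(0)=0$ and $\Vapprox$ is continuous. If $\Vapprox(x_0)>1$, then $\dot{\Vapprox}=\|\gVapprox\|\npdapprox(\Vapprox^2-1)>0$. Hence $\Vapprox$ is nondecreasing along the trajectory and stays above $1$, and again there is no convergence to $0$.

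The main obstacle is the boundedness and compactness of sublevel sets. This is needed both for global existence of $\hat\pi$ and for the LaSalle step, and it is not explicit in the hypotheses. I would either argue it from the domain (taking $S_1(\Vapprox)$ compact in $D$) or state it as part of the well-posedness assumption. A second subtlety is identifying $S_1(\Vapprox)^\circ$ with $\{\Vapprox<1\}$. For this I would use that $\gVapprox\ne0$ on $\{\Vapprox=1\}$, so that level set has empty interior.
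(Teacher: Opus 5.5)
Your argument is correct, given the compactness hypothesis you add, but it takes a different route from the paper.

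\textbf{How the paper argues.} The paper uses neither LaSalle nor a separate invariance argument. After the same computation $\gVapprox^\top\fhat=-\|\gVapprox\|\,\npdapprox\,(1-\Vapprox^2)$, it recognizes this as the scaled Zubov-type identity \eqref{eq:smlpv-dis} with $\phi=\|\gVapprox\|\npdapprox$. It then invokes Lemma~\ref{lem:V1} and the sufficiency part of Theorem~\ref{thm:maximal} with $A=S_1(\Vapprox)^\circ$, obtaining asymptotic stability and $\roaapprox=S_1(\Vapprox)^\circ$ in one step. This buys brevity and the conceptual point that $\Vapprox$ is, by construction, a scaled maximal Lyapunov function of the model, mirroring Theorem~\ref{thm:pdesol}. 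The cost is that several hypotheses of the black-box theorem are left implicit:
\begin{itemize}
\item that $\Vapprox<1$ on $A$, which is needed for $\tanh^{-1}$ in Lemma~\ref{lem:V1};
\item that $\Vapprox\to1$ also as $\|x\|\to\infty$ inside $A$;
\item the asymptotic stability that Theorem~\ref{thm:maximal} presupposes.
\end{itemize}

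\textbf{What your route buys.} Your direct argument checks each of these and exposes the mechanism. In particular, the reverse inclusion $\roaapprox\subset S_1(\Vapprox)^\circ$ needs no compactness at all, because along any solution
\[
1-\Vapprox(\hat\pi(t,x_0))=\bigl(1-\Vapprox(x_0)\bigr)\exp\Bigl(\int_0^t\bigl(\|\gVapprox\|\,\npdapprox\,(1+\Vapprox)\bigr)(\hat\pi(s,x_0))\,ds\Bigr),
\]
so the sign of $1-\Vapprox$ is preserved. This identity justifies the invariance of $\{\Vapprox=1\}$ more cleanly than ``tangency plus uniqueness''.

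\textbf{One imprecision.} For $S_1(\Vapprox)^\circ\subset\{\Vapprox<1\}$, the right reason is that a point of $S_1(\Vapprox)^\circ$ with $\Vapprox=1$ would be an interior local maximum. It would then be a nonzero critical point, contradicting \eqref{eq:zgvtilde}. The level set having empty interior does not by itself suffice.

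\textbf{The compactness caveat is genuinely needed.} It is not an artifact of your method: nothing in the stated hypotheses prevents the forward inclusion from failing. The paper's assertion that $S_1(\Vapprox)^\circ$ has bounded trajectories is not justified. For instance, in $\R^2$ take:
\begin{itemize}
\item $\Vapprox=\tfrac12\bigl(x_1^2/(1+x_1^2)+x_2^2\bigr)$;
\item $\fnomapprox=\rho\,R\,\gVapprox$, with $R$ the rotation by $\pi/2$ and $\rho$ smooth, vanishing near $0$ and equal to $1$ away from it;
\item $\npdapprox=\|x\|^2e^{-\|x\|^2}$.
\end{itemize}
All class conditions and the hypotheses of Proposition~\ref{prop:fhat-lip} hold. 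Yet suitable initial points with $\tfrac12<\Vapprox<1$, far from the origin, slide along the unbounded level curves to infinity while $\Vapprox$ decreases only slightly. These points lie in $S_1(\Vapprox)^\circ$ but not in $\roaapprox$. Assuming $S_1(\Vapprox)$ is a compact subset of $D$, as the paper effectively does in Proposition~\ref{prop:roaapprox}, repairs both proofs.
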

\begin{proof}
    Since $\fhat$ is locally Lipschitz on $D$ by Proposition~\ref{prop:fhat-lip}, the system $\Sigma_{\fhat,D}$ is well-posed on positively invariant sets with bounded trajectories \cite{khalil2002nonlinear}. Since $\fnomapprox(0)=0$, $\Wnapprox(0)=0$, and $\npdapprox(0)=0$, it follows by design \eqref{eq:fhat} that $\fhat(0)=0$, that is, $x=0$ is an equilibrium point for $\Sigma_{\fhat, D}$. Additionally, we have:
    \begin{equation*}
        \begin{split}
            \forall x\in D,\quad \gVapprox(x)^\top \fhat(x) = \|\gVapprox(x)\| \big( \Wnapprox(x)^\top \tilde{f}(x) \\- \|\Wnapprox(x)\|^2 \Wnapprox(x)^\top \tilde{f}(x) + \|\Wnapprox(x)\|^2\big[-\npdapprox(x)(1-\Vapprox(x)^2)\big]\big)\\
            =\|\gVapprox(x)\| \big(-\npdapprox(x)(1-\Vapprox(x)^2)\big)
        \end{split}
    \end{equation*} where $x\mapsto -\|\gVapprox(x)\|\npdapprox(x)$ is a positive definite function in $\cn{D}$ by design. Thus, the conditions in Lemma~\ref{lem:V1} corresponding to the sufficient conditions in Theorem~\ref{thm:maximal} are satisfied with $A\triangleq S_1(\Vapprox)^\circ$. In particular, $S_1(\Vapprox)^\circ$ is a positively invariant set with bounded trajectories, and we can conclude that the equilibrium point $x=0$ in $\Sigma_{\fhat,D}$ is asymptotically stable with ROA $\roaapprox=S_1(\Vapprox)^\circ$.
\end{proof}
\subsection{Estimating the ROA of the true system}
By Assumption~\ref{ass:roa}, the system $\Sigma_{f,D}$ has an equilibrium in the origin with ROA $\roa\subset D$. In Proposition~\ref{prop:fhatroa}, we showed that the approximate system $\Sigma_{\fhat, D}$ has an equilibrium point in the origin where the ROA $\roaapprox$ is exactly characterized by $S_1(\Vapprox)^\circ$. In this section, we will outline conditions under which $\roaapprox$ approximates $\roa$. 
We summarize the proof below and defer it to Appendix~\ref{app:pf-roaapprox}.
\begin{proposition}\label{prop:roaapprox}
Let $D\subset \Rn$ be a domain with $0\in D$.
Consider the system $\Sigma_{f, D}$ and suppose Assumption~\ref{ass:roa}--\ref{ass:hurwitz} holds. Assume $\roa\subset D$ is bounded. Let $V$ be the scaled maximal Lyapunov function for $\Sigma_{f,D}$ identified by Theorem~\ref{thm:pdesol}. Suppose $(\fnomapprox, \Vapprox, \npdapprox)\in (\mathcal{F}, \mathcal{V}, \mathcal{P})_D$ safisfies the conditions of Proposition~\ref{prop:fhatroa}. Assume $S_1(\Vapprox)\subset D$ is bounded, and let $\Lambda\subset D$ be a compact set containing $S_1(V)$ and $S_1(\Vapprox)$. Assume that the following conditions hold: 
\begin{equation}\label{eq:gV-lb}
    \forall r\in \partial S_1(V),\quad \lim\limits_{x\to r}\|\gV(x)\|>0, 
\end{equation}
\begin{equation}\label{eq:gVapprox-lb}
    \forall r\in \partial S_1(\Vapprox),\quad \lim\limits_{x\to r}\|\gVapprox(x)\|>0 
\end{equation}
Then for every $0<\varepsilon<1$ there exists $\eta>0$ such that
\begin{equation}
    \|f-\fhat\|_{\cn{\Lambda}}<\eta\implies \begin{cases}
       \lambda_{n} (S_1(\Vapprox)^\circ\backslash S_1(V)^\circ)<\varepsilon,\,\text{and}\\
         \lambda_{n} (S_1(V)^\circ\backslash S_1(\Vapprox)^\circ)<\varepsilon.
    \end{cases}
\end{equation}
\end{proposition}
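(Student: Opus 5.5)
Because of the two symmetric set differences, I would show that each region of attraction lies inside a slight enlargement of the other, and that these enlargements have small measure. Concretely, I would fix thin collars around the two boundaries,
\[
N_\rho \triangleq \{x\in\Lambda \mid 1-\rho< V(x)\}\cap S_1(V),\qquad \widehat N_\rho \triangleq \{x\in\Lambda \mid 1-\rho<\Vapprox(x)\}\cap S_1(\Vapprox),
\]
and prove three things. First, $\lambda_n(N_\rho)\to0$ and $\lambda_n(\widehat N_\rho)\to0$ as $\rho\to0$. Second, if $\eta$ is small, then $S_1(\Vapprox)^\circ\setminus S_1(V)^\circ\subset N_\rho\cup \partial S_1(V)\cup(\text{small set})$, and symmetrically. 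Third, $\partial S_1(V)$ and $\partial S_1(\Vapprox)$ are Lebesgue null.

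\textbf{Measure of the collars and boundaries.} Both the first and third points come from the gradient conditions \eqref{eq:gV-lb}--\eqref{eq:gVapprox-lb}. Near each point of $\partial S_1(V)$ the gradient is bounded away from zero. By compactness of $\Lambda$ I would extract a uniform lower bound $\|\gV\|\ge c>0$ on a neighborhood of $\partial S_1(V)$, which requires $V$ to extend $\mathcal{C}^1$ across the boundary in the limiting sense the conditions suggest. The implicit function theorem then makes the level set $\{V=1\}$ a $\mathcal{C}^1$ hypersurface, hence null. Using the coarea formula, or simply the nestedness $\bigcap_\rho N_\rho=\emptyset$ within $S_1(V)^\circ$ together with continuity of measure from above, gives $\lambda_n(N_\rho)\to 0$. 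The same argument applies to $\Vapprox$. The continuity-from-above route is cleaner and needs only boundedness.

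\textbf{Inclusion step (the core).} This is where the dynamics enter, and I expect it to be the main obstacle. Take $x\in S_1(\Vapprox)^\circ$ with $V(x)\le 1-\rho$, so $x$ lies deep inside $\roa$. The aim is the contrapositive: points of $\roaapprox$ not in $\roa$ must sit in a thin layer near $\partial\roa$. By Proposition~\ref{prop:fhatroa}, $\hat\pi(t,x)\to0$, while along $\pi$ the function $V$ decays according to \eqref{eq:vpde}. The trouble is that the trajectory-based argument compares flows over infinite time, where a Gronwall estimate blows up. I would therefore argue locally at the boundary instead. On $\partial S_1(V)$ the PDE \eqref{eq:vpde} gives $\gV^\top f=0$, so $f$ is tangent to the boundary. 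Meanwhile $\gVapprox^\top\fhat=-\|\gVapprox\|\npdapprox(1-\Vapprox^2)$, which vanishes exactly on $\partial S_1(\Vapprox)$. If $\partial S_1(\Vapprox)$ intruded a distance $\ge\rho'$ into $S_1(V)^\circ$, then on that intruding portion $\fhat$ would be tangent to $\{\Vapprox=1\}$. But $f$ is strictly transversal to the levels of $V$ there, with $\gV^\top f\le -c'<0$ on the compact set $\{V\le1-\rho\}\setminus B_r$, since $\omega$ is positive definite and $\|\gV\|$ is bounded below. Closeness $\|f-\fhat\|<\eta$ keeps $\fhat$ pointing inward across the $V$-levels. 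Then $\{V\le 1-\rho\}$ is positively invariant for $\hat\pi$, as is any sublevel of $V$ reached from it, and trajectories of $\hat\pi$ there cannot remain on $\{\Vapprox=1\}$. This yields $\{V\le1-\rho\}\subset\roaapprox$, and hence $S_1(V)^\circ\setminus S_1(\Vapprox)^\circ\subset N_\rho$.

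\textbf{The reverse inclusion.} Symmetrically, I would use transversality of $\fhat$ to the levels of $\Vapprox$ on $\{\Vapprox\le 1-\rho\}\setminus B_r$, which holds by positive definiteness of $\npdapprox$ and \eqref{eq:gVapprox-lb}. Together with $\|f-\fhat\|<\eta$, this makes $\{\Vapprox\le1-\rho\}$ forward invariant for $\pi$, so it is contained in $\roa$. The delicate point is that this transversality margin depends on $\Vapprox$, so $\eta$ must be chosen after $\Vapprox$. I would accept this, since the statement quantifies $\eta$ after $\Vapprox$ is given. Near the origin, I would use the local exponential stability from Assumption~\ref{ass:hurwitz} to show that a small ball is in both regions of attraction.

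\textbf{Choosing the constants.} Finally, I would choose $\rho$ so that $\lambda_n(N_\rho),\lambda_n(\widehat N_\rho)<\varepsilon$, and then choose $\eta$ smaller than both transversality margins.
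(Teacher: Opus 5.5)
Your proof is correct. Its dynamical core is the same as the paper's. You show $\{V\le 1-\rho\}\subset\roaapprox$ by making $V$ a strict Lyapunov function for $\fhat$ on the compact annulus $\{V\le 1-\rho\}\setminus B_r$, once $\|f-\fhat\|$ falls below a margin set by $\omega(1-V^2)$ there. You then show $\{\Vapprox\le 1-\rho\}\subset\roa$ by the mirror argument with $\Vapprox$ along $f$. This is exactly the paper's pair of inclusions, including the fact that $\eta$ depends on the model, as in the paper's $\eta(\varepsilon)=m_{1-\varepsilon}(1-(1-\varepsilon)^2)$.

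You genuinely differ in the measure step.
\begin{itemize}
\item The paper bounds $\lambda_n(\{1-\varepsilon<V<1\})$ with the coarea formula. This needs the boundary gradient bounds \eqref{eq:gV-lb}--\eqref{eq:gVapprox-lb} and an asserted, unproved uniform bound on $\lambda_{n-1}$ of the level sets. In return it gives a linear rate in $\varepsilon$.
\item Your continuity-from-above argument uses only boundedness. The sets $\{x\in\roa : V(x)>1-\rho\}$ and $\{1-\rho<\Vapprox<1\}$ each decrease to $\emptyset$, so their measures tend to zero. You get no rate, but you show that \eqref{eq:gV-lb}--\eqref{eq:gVapprox-lb} are not needed at all.
\end{itemize}
To make this fully hypothesis-free, define your collars with the strict inequality $<1$ instead of intersecting with $S_1(\cdot)$. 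Also note that $S_1(\Vapprox)^\circ=\{\Vapprox<1\}$: an interior point with $\Vapprox=1$ would be a local maximum, hence a nonzero critical point, which contradicts the class-$\mathcal{V}$ condition. With these two changes, your third point about null boundaries and the implicit function theorem becomes unnecessary.

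Some parts of the write-up should be cleaned up.
\begin{itemize}
\item Your opening pairing is backwards. $S_1(\Vapprox)^\circ\setminus S_1(V)^\circ$ lies outside $\roa$, so it cannot be covered by $N_\rho\subset S_1(V)$. It is covered by $\widehat N_\rho$, which is what your reverse-inclusion paragraph actually proves.
\item The tangency/intrusion discussion and the Gronwall remark are a detour. The annulus decrease argument alone suffices.
\item The small ball near the origin needs no linearization. Both regions of attraction are open neighborhoods of $0$ (Proposition~\ref{prop:fhatroa} gives this for $\roaapprox$). In any case, Assumption~\ref{ass:hurwitz} concerns $f$ and could not give a ball inside $\roaapprox$.
\item The positivity of $\|\gVapprox\|$ on $\{\Vapprox\le 1-\rho\}\setminus B_r$ comes from the class-$\mathcal{V}$ condition plus compactness, not from \eqref{eq:gVapprox-lb}.
\end{itemize}
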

\begin{proofsketch}
    For the property $S_{1-\varepsilon}(\Vapprox)\subset S_1(V)^\circ=\roa$, we show that trajectories starting in $S_{1-\varepsilon}(\Vapprox)$ converge to a small ball within $\roa$ in finite time whenever $\fhat$ is a sufficiently close approximation of $f$. By symmetry, this holds when the roles of the true system and model system are interchanged.
    This allows us to upper bound the top target volume $\lambda_{n} (S_1(\Vapprox)^\circ\backslash S_1(V)^\circ)$ by $\lambda_{n}(A(\varepsilon))$ where $A(\varepsilon)\triangleq \{x\mid 1-\varepsilon<V(x)< 1\}$. We then apply the coarea formula to show that $\lambda_{n}(A(\varepsilon))\to 0$ as $\varepsilon\to 0$ provided that \eqref{eq:gV-lb} holds. The bottom target volume can be bounded by identical arguments and using \eqref{eq:gVapprox-lb}. For details, see Appendix~\ref{app:pf-roaapprox}.
\end{proofsketch}

\section{Practical implementation}\label{sec:implementation}
In this section, we introduce practical architectures for the design of the generating functions $(\fnomapprox, \Vapprox, \npdapprox)\in (\mathcal{F}, \mathcal{V}, \mathcal{P})_D$. We further propose a bias-free learning objective jointly targeting precise approximations $\Sigma_{\fhat,D}\approx\Sigma_{f,D}$ and $\roaapprox\approx\roa$.
\subsection{Model architecture}\label{sec:model-architecture}
Let $D\subset\Rn$ be a domain with $0\in D$. We model $\tilde{f}\in \mathcal{F}_D$ by a neural network with activation function $\mathrm{SiLU}\in\mathcal{C}^\infty$ translated to vanish at the origin. 
We further model $\Vapprox\in \mathcal{V}_D$ as 
\begin{equation}\label{eq:Vnet}
\Vapprox:x\mapsto I(T(x))
\end{equation}
where $I\in \cd{\Rm,\R}$ with $m\geq 2n+1$ is a positive definite input-convex neural network (ICNN) \cite{manek_learning_2020} \cite{amos2017input} with activation function $\mathrm{ReHU}\in \mathcal{C}^1$ defined by
\begin{equation}\label{eq:rehu}
    \mathrm{ReHU}:x\mapsto 
    \begin{cases}
        0 & \text{if }x\leq 0,\\
        \frac{x^2}{2d} & \text{else if }0<x<d,    \\
        x-\frac{d}{2} & \text{else}      
    \end{cases}
\end{equation} for some $d>0$ , and $T\in\mathcal{C}^\infty({D,\Rm})$ is defined by
\begin{equation}
    T:x\mapsto [x; S(x)]
\end{equation}
where $S\in \mathcal{C}^\infty(D, \R^{m-n})$ is a neural network with $S(0)=0$. Therefore, $T$ is injective and lifts from $D\subset\Rn$ to $\Rm$, and satisfies $T(0)=0$. The design implies that
\begin{equation}
    \begin{split}
        \forall x\in D,\quad D\Vapprox(x)&=DI(T(x))DT(x)\\
        &=D I(T(x))[I_n; DS(x)] ,\\
    \end{split}
\end{equation} which satisfies the class condition \eqref{eq:zgvtilde} 
of $\mathcal{V}_D$.
We finally model $\npdapprox\in\mathcal{P}_D$ by
\begin{equation}\label{eq:nnet}
    \npdapprox:x\mapsto N(x)
\end{equation} where $N\in \cn{D, \R}$ is a positive definite neural network with $\mathrm{ReHU}$ activations. In practice, we add small quadratic terms, $c\|x\|^2$ where $0<c<<1$, to both \eqref{eq:Vnet} and \eqref{eq:nnet}.
\subsection{Optimization problem}\label{sec:opt-problem}
Consider the parametrized models $(\tilde{f}_{\theta},\Vapprox_{\theta},\npdapprox_{\theta})\in(\mathcal{F}, \mathcal{V}, \mathcal{P})_D$ and let $\fhat_\theta\triangleq\mathcal{G}_D(\fnomapprox_{\theta},\Vapprox_{\theta},\npdapprox_{\theta})$. Suppose we have a dataset $\mathcal{D}$ consisting of sampled trajectories of $\pi$, cf. \eqref{eq:data}. The function approximations can be optimized using the trajectory reconstruction loss
\begin{equation}\label{eq:LdI}
    \mathcal{L}_{d}({\theta})=\int_D\int_0^T\|\pi(t, x)-\hat{\pi}(t, x;\theta)\|I_{d}(x; \Vapprox_{\theta})dtdx,
\end{equation} where $\hat{\pi}(\cdots;\theta)$ is the flow of $\Sigma_{\fhat_\theta, D}$ and
\begin{equation}\label{eq:Id}
    I_{d}(x; \Vapprox_{\theta})=\begin{dcases}
        1 & \text{if } \Vapprox_{\theta}(x)\leq 1,\\
        0 & \text{else}.
    \end{dcases}
\end{equation}

To further enhance training, we introduce two bias-free regularizers that target the ability of $\roaapprox_\theta=S_1(\Vapprox_{\theta})^\circ$ to correctly identify the ROA. The escape loss $\mathcal{L}_{es}$ identifies the trajectories starting within $\roaapprox_\theta$ but ending in $\roaapprox^c_\theta$ and induce positive pressure on $\Vapprox_{\theta}$ in the corresponding starting points: specifically,  
\begin{equation}\label{eq:Les}
    \mathcal{L}_{es}(\theta_{\Vapprox})=\int_D(1-\Vapprox_{\theta}(x))I_{es}(x; \Vapprox_{\theta}, \pi, T)dx
\end{equation}
where 
\begin{equation}
    I_{es}(x; \Vapprox_{\theta}, \pi, T)=\begin{dcases}
        1 & \text{if } \Vapprox_{\theta}(x)< 1 \text{ and } \\
          & \quad \exists t\in [0, T]:\Vapprox_{\theta}(\pi(t,x))\geq 1,\\
        0 & \text{else}.
    \end{dcases}
\end{equation} Conversely, the escape loss identifies the trajectories that start in $\roaapprox^c_\theta$ but end in $\roaapprox_\theta$ and induce negative pressure on $\Vapprox_{\theta}$ in the corresponding starting points: specifically,
\begin{equation}\label{eq:Len}
    \mathcal{L}_{en}(\theta_{\Vapprox})=\int_D(\Vapprox_{\theta}(x)-1)I_{en}(x; \Vapprox_{\theta}, \pi, T)dx
\end{equation}
where 
\begin{equation}
    I_{en}(x; \Vapprox_{\theta}, \pi, T)=\begin{dcases}
        1 & \text{if } \Vapprox_{\theta}(x)\geq 1 \text{ and } \\
          & \quad \exists t\in [0, T]:\Vapprox_{\theta}(\pi(t,x))<1,\\
        0 & \text{else}.
    \end{dcases}
\end{equation}
\begin{remark}
Our regularization relates to the classification loss of \cite{richards_lyapunov_2018}, where sampled states are labeled by ROA inclusion and the Lyapunov function is penalized for misclassification. When labels are unavailable, the approximate procedure infers them from trajectories crossing into the current sublevel set, similar to $I_{en}$. Our formulation differs in two aspects. First, it is inherently label-free: rather than reconstructing labels, we use only the model boundary $\{\Vapprox = 1\}$ and observe whether trajectories cross it. Second, it is two-sided: alongside the entry term $I_{en}$, a reverse term $I_{es}$ penalizes trajectories escaping the sublevel set. The two apply opposing pressure, correcting estimates that are too conservative and too permissive, respectively, with both vanishing once the boundary correctly identifies the ROA.
\end{remark}

We use a two-stage learning procedure, in which we first learn a nominal model $\tilde{f}_{\varphi}$ by optimizing the data loss \begin{equation}\label{eq:obj-nom}
    \begin{split}
\varphi^\ast = \argmin\limits_{{\varphi}}\mathcal{L}_{d}^{n}({\varphi})
    \end{split}
\end{equation}
where 
\begin{equation}\label{eq:Ldnom}
    \mathcal{L}_{d}^n({\varphi})=\int_D\int_0^T\|\pi(t, x)-\hat{\pi}(t, x;{\varphi})\|dtdx
\end{equation} with $\hat{\pi}(\cdots;{\varphi})$ being the flow of $\Sigma_{\fnomapprox_{\varphi}, D}$. The optimization of the parameters $\theta$ defining the model $\fhat_\theta$ is warm-started using $\theta_{\fnomapprox}=\varphi^\ast$ before learning with the full objective
\begin{equation}\label{eq:obj-comp}
    \begin{split}
        \argmin\limits_{\theta}\mathcal{L}_{d}( \theta) + D_{es} \mathcal{L}_{es}(\theta_{\Vapprox}) + D_{en} \mathcal{L}_{en}(\theta_{\Vapprox}),
    \end{split}
\end{equation}where $D_{es}>0$ and $D_{en}>0$ are tunable hyperparameters. 

\section{Numerical simulations}\label{sec:simulations}
In this section, we validate our approach on a set of nonlinear benchmark systems.  We are interested in whether the learned model $\Sigma_{\fhat,\roaapprox}$ approximates the true system $\Sigma_{f,\roa}$, and assess both the accuracy of the characterized ROA $\roaapprox\approx \roa$ and the quality of the learned system in this region $\Sigma_{\fhat, \roaapprox}\approx\Sigma_{f, \roaapprox}$. We begin by analysing how well the 1-sublevel set $S_1(\Vapprox)^\circ$ of the jointly optimized Lyapunov function $\Vapprox$, which coincides with $\roaapprox$ by Proposition~\ref{prop:fhatroa}, recovers the true ROA $\roa$. We then analyse the reconstruction error of the learned dynamics $\fhat\approx f$ and its flow $\hat{\pi}\approx \pi$ within $\roaapprox$. We compare against the globally stable model \cite{manek_learning_2020} throughout, which serves to highlight the benefits of the universal approximation capabilities of our model design compared to an overconstrained class.
\subsection{Experimental details}
Our method is evaluated on three two-dimensional nonlinear systems with nonconvex ROA, and compared with \cite{manek_learning_2020}. 
The datasets are generated by sampling a set of starting points uniformly in a region containing the true ROA and integrating the dynamics to obtain sample trajectories of the form \eqref{eq:data}. Diverging trajectories are truncated. We model \eqref{eq:fhat} in Section~\ref{sec:contribution} using the architecture described in Section~\ref{sec:model-architecture} and optimize parameters with objectives \eqref{eq:obj-nom} and \eqref{eq:obj-comp} using the two-stage learning procedure as described in Section~\ref{sec:opt-problem}. We evaluate the ROA estimate using classification metrics and form the labels by observing converging or diverging behavior of corresponding trajectories under the true model. We evaluate the reconstruction error by illustrating the pointwise vector field error and reporting the error of the corresponding flow trajectories. 
The full experimental details are outlined in Appendix~\ref{sec:app:exp-details}. The code will be made available upon publication in \href{https://github.com/aharting?tab=repositories}{https://github.com/aharting}.
\subsection{Results}
\begin{figure*}[tp]
  \centering
  \begin{subfigure}[b]{0.33\linewidth}
    \includegraphics[width=\linewidth]{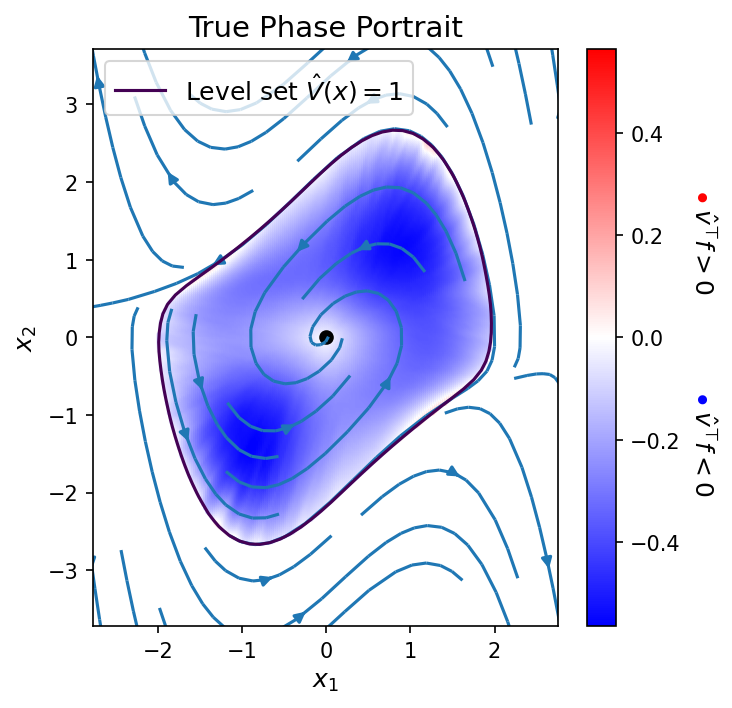}
    \caption{True flow and learned ROA}
    \label{fig:phase-portrait-VdP1-true}
  \end{subfigure}\hfill
  \begin{subfigure}[b]{0.33\linewidth}
    \includegraphics[width=\linewidth]{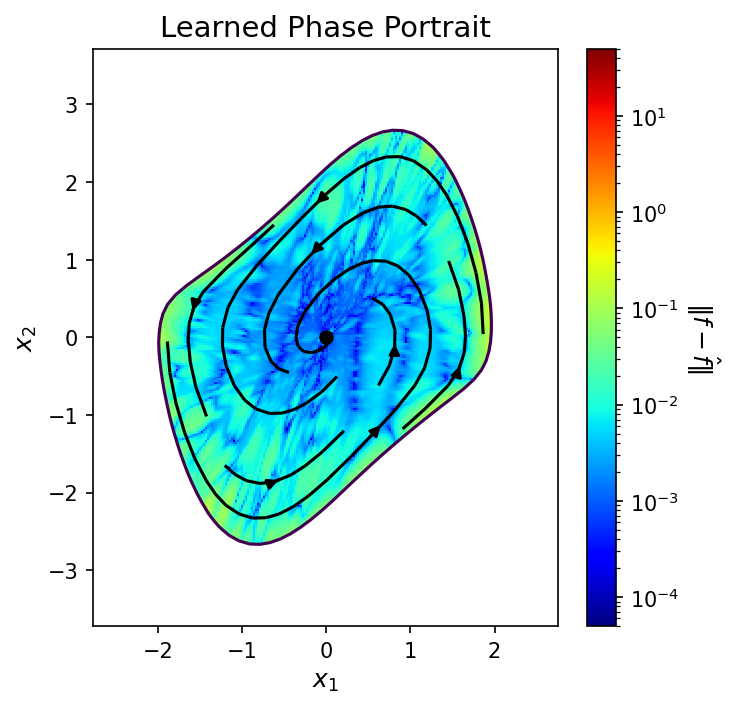}
    \caption{Learned flow with our method}
    \label{fig:phase-portrait-VdP1-learned}
  \end{subfigure}\hfill
  \begin{subfigure}[b]{0.33\linewidth}
    \includegraphics[width=\linewidth]{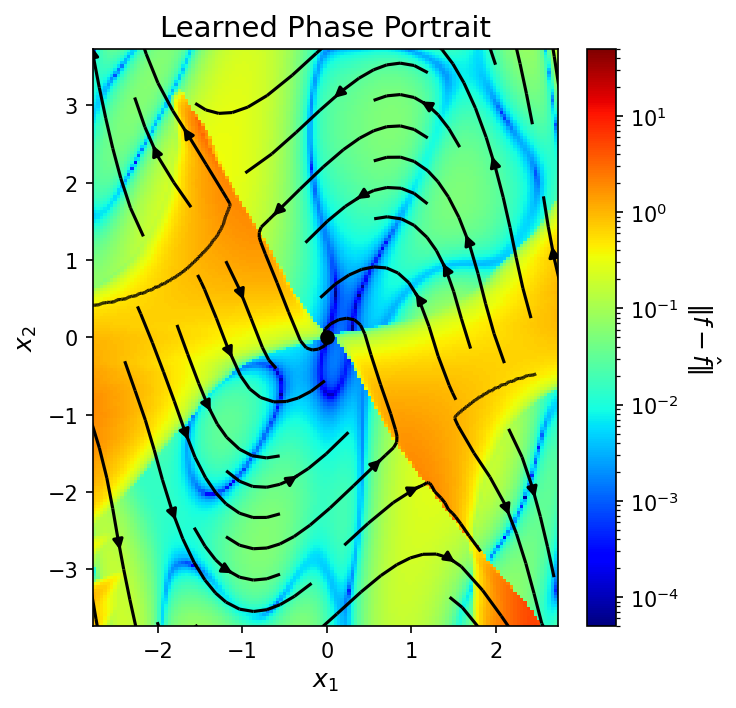}
    \caption{Learned flow with globally stable model \cite{manek_learning_2020}}
    \label{fig:phase-portrait-VdP1-MK}
  \end{subfigure}

  \vspace{0.5em}
  {\small\itshape VdP system \eqref{eq:vdp} with $\mu=1$}
  \vspace{1em}

  \begin{subfigure}[b]{0.33\linewidth}
    \includegraphics[width=\linewidth]{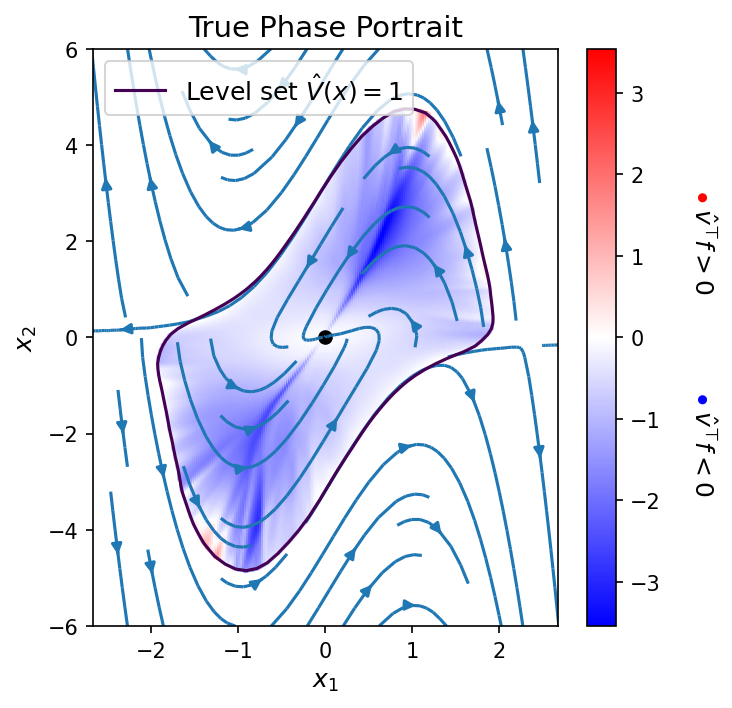}
    \caption{True flow and learned ROA}
    \label{fig:phase-portrait-VdP3-true}
  \end{subfigure}\hfill
  \begin{subfigure}[b]{0.33\linewidth}
    \includegraphics[width=\linewidth]{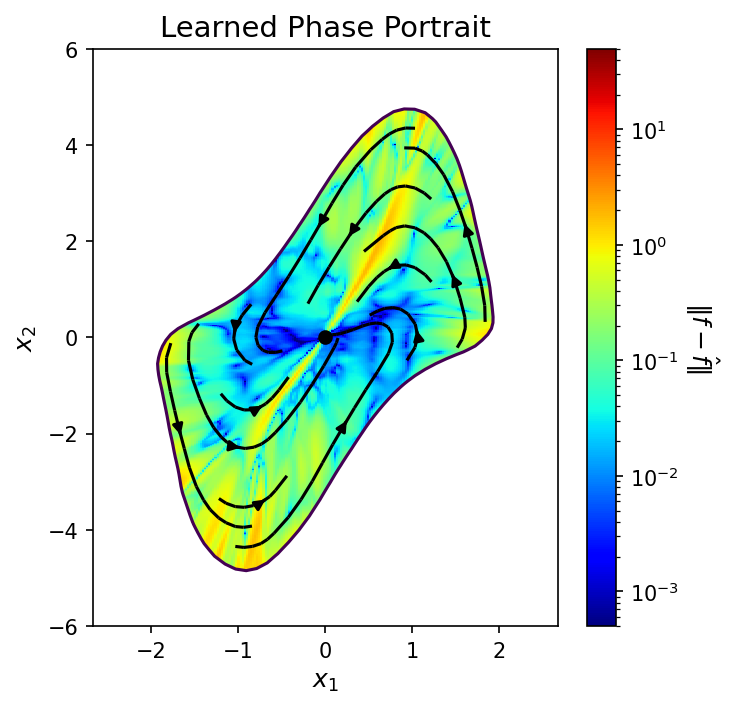}
    \caption{Learned flow with our method}
    \label{fig:phase-portrait-VdP3-learn}
  \end{subfigure}\hfill
  \begin{subfigure}[b]{0.33\linewidth}
    \includegraphics[width=\linewidth]{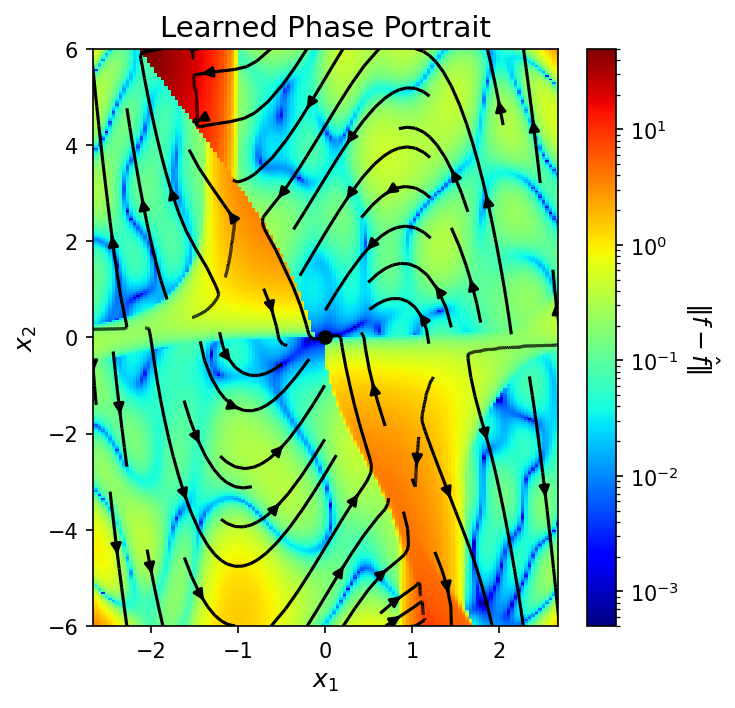}
    \caption{Learned flow with globally stable model \cite{manek_learning_2020}}
    \label{fig:phase-portrait-VdP3-MK}
  \end{subfigure}

  \vspace{0.5em}
  {\small\itshape VdP system \eqref{eq:vdp} with $\mu=3$}
  \vspace{1em}

  \begin{subfigure}[b]{0.33\linewidth}
    \includegraphics[width=\linewidth]{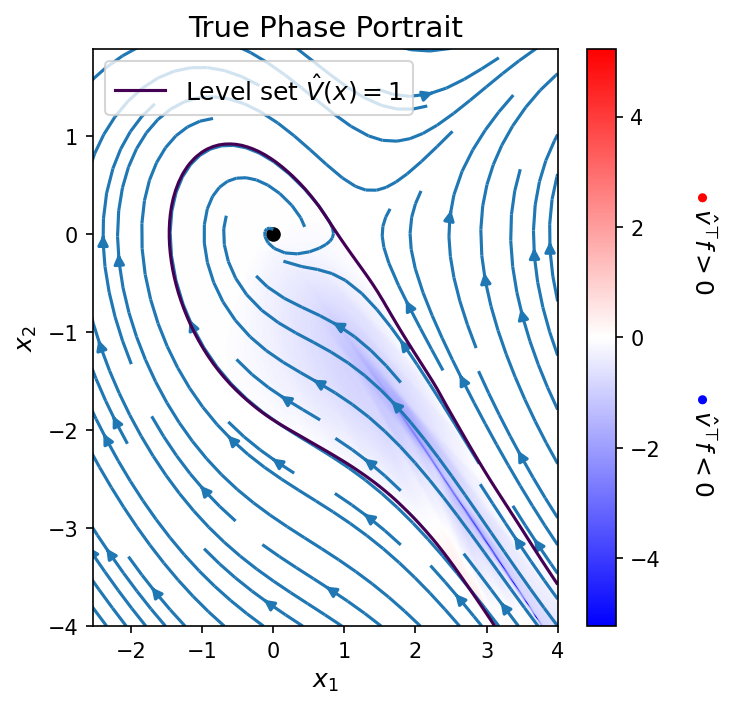}
    \caption{True flow and learned ROA}
    \label{fig:phase-portrait-TMPS-true}
  \end{subfigure}\hfill
  \begin{subfigure}[b]{0.33\linewidth}
    \includegraphics[width=\linewidth]{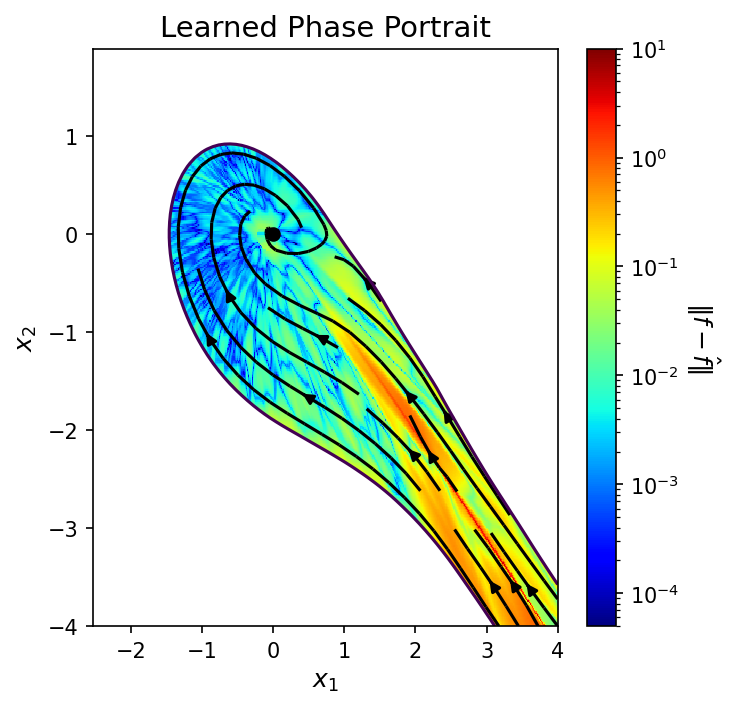}
    \caption{Learned flow with our method}
    \label{fig:phase-portrait-TMPS-learned}
  \end{subfigure}\hfill
  \begin{subfigure}[b]{0.33\linewidth}
    \includegraphics[width=\linewidth]{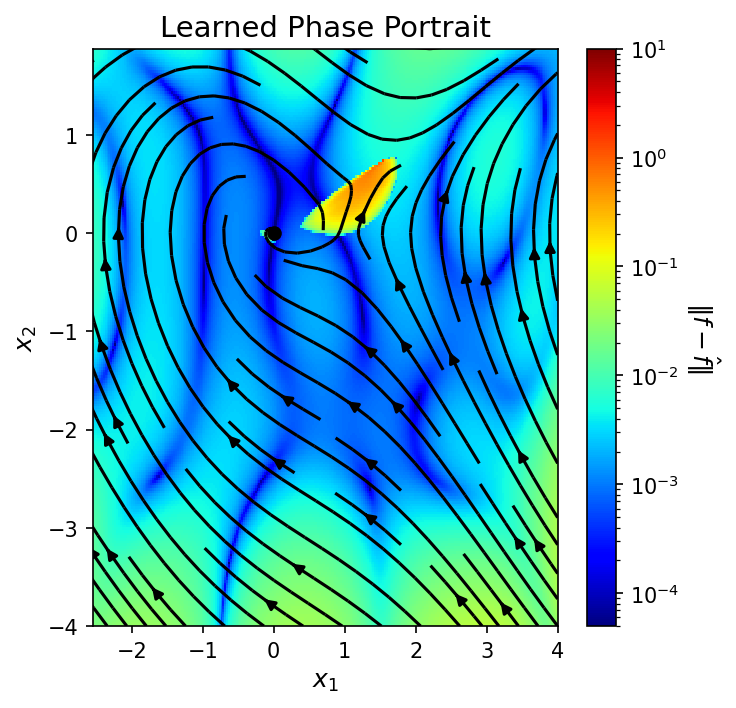}
    \caption{Learned flow with globally stable model \cite{manek_learning_2020}}
    \label{fig:phase-portrait-TMPS-MK}
  \end{subfigure}

  \vspace{0.5em}
  {\small\itshape TMP system \eqref{eq:tmps}}

  \caption{Phase portraits for the two-dimensional benchmark systems. In each row, the \textbf{left} panel shows the true flow streamlines, with the contour marking the 1-level set of the learned Lyapunov function which characterizes the learned ROA; the background color indicates the sign of the Lyapunov function's derivative along the true flow. The \textbf{middle} panel shows the learned flow streamlines obtained with our method, with the background encoding the pointwise absolute error of the learned vector field. The \textbf{right} panel shows the learned flow streamlines and pointwise absolute error obtained using a globally stable model \cite{manek_learning_2020}.}
  \label{fig:phase-portrait}
\end{figure*}
Consider the Van der Pol (VdP) system
\begin{equation}\label{eq:vdp}
    \dot{x}_1 = -x_2, \qquad \dot{x}_2 = x_1 - \mu(1 - x_1^2)x_2,
\end{equation}
with $\mu \in \{1, 3\}$, shown in the first two rows of Fig.~\ref{fig:phase-portrait}. Following \cite{liu_physics-informed_2025}, we further consider the two-machine power (TMP) system 
\begin{equation}\label{eq:tmps}
    \dot{x}_1 = x_2, \qquad \dot{x}_2 = -0.5x_2 - (\sin(x_1 + \delta) - \sin(\delta))
\end{equation}
with $\delta = \frac{\pi}{3}$, shown in the third row of Fig.~\ref{fig:phase-portrait}. In each row, the left panel shows the true flow together with the learned ROA, the middle panel shows the flow learned with our method, and the right panel shows the flow learned with the benchmark \cite{manek_learning_2020}.
\begin{table}
  \centering
  \caption{Validation metrics. Benchmark: \cite{manek_learning_2020}.}
  \label{tab:validation_metrics}
  \small\setlength{\tabcolsep}{4pt}
  \begin{tabular}{lcccccc}
    \toprule
    & \multicolumn{3}{c}{\textbf{ROA Classification}}
    & \multicolumn{2}{c}{\textbf{Traj.\ error (ours / bench.)}} \\
    \cmidrule(lr){2-4}\cmidrule(lr){5-6}
    & \textbf{Acc.} & \textbf{F1}$^*$ & \textbf{Prec.}
    & \textbf{Mean} & \textbf{Med.}\\
    \midrule
    VdP ($\mu=1$) & 0.996 & 0.994 & 1.000 & 0.053 / 0.392 & 0.004 / 0.189 \\
    VdP ($\mu=3$) & 0.979 & 0.965 & 0.993 & 0.096 / 0.266 & 0.015 / 0.121 \\
    TMPS & 0.927 & 0.898 & 1.000 & 0.083 / 0.076 & 0.035 / 0.002\\
    \bottomrule
  \end{tabular}
  \par\smallskip
  \raggedright\footnotesize $^*$F1 = harmonic mean of precision and recall:
  $\text{F1} = 2 \cdot \frac{\text{Precision} \times \text{Recall}}{\text{Precision} + \text{Recall}}$.
\end{table}
The left panels (Figs.~\ref{fig:phase-portrait-VdP1-true},~\ref{fig:phase-portrait-VdP3-true},~\ref{fig:phase-portrait-TMPS-true}) show that the 1-sublevel set $S_1(\Vapprox)^\circ$ of the learned Lyapunov function $\Vapprox$ aligns with the true ROA of each system, and that $\Vapprox$ decreases along the true flow trajectories starting within this set. The proposed model architecture admits a nonconvex ROA, as is pronounced in the VdP system with $\mu = 3$ (Fig.~\ref{fig:phase-portrait-VdP3-true}), as well as a ROA with a nonclosed boundary, as in the TMP system (Fig.~\ref{fig:phase-portrait-TMPS-true}); the latter can be compared to Fig.~3 in \cite{liu_physics-informed_2025} where the learned boundary is closed. Thus, $\Vapprox$ approximates a maximal Lyapunov function for the true system, and $\roaapprox=S_1(\Vapprox)^\circ$ approximates the true ROA. This is reflected in Table~\ref{tab:validation_metrics}, where the ROA classification F1-scores ranges from $0.898$ to $0.994$. The precision attains $1.000$ on the VdP system ($\mu = 1$) and the TMP system, which means that no sampled point in $\roaapprox$ does not belong to $\roa$.

The middle panels (Figs.~\ref{fig:phase-portrait-VdP1-learned},~\ref{fig:phase-portrait-VdP3-learn},~\ref{fig:phase-portrait-TMPS-learned}) and right panels (Figs.~\ref{fig:phase-portrait-VdP1-MK},~\ref{fig:phase-portrait-VdP3-MK},~\ref{fig:phase-portrait-TMPS-MK}) contrast the learned phase portraits of our method and the benchmark. Our method approximates the true vector field well within $\roaapprox$, producing smooth flows reflective of the true system. The error correlates with the data density: it is smaller near the origin, where converging trajectories accumulate, and higher in areas that are rarely visited or only visited for a short time. The benchmark, in contrast, exhibits a high error in parts of the state space that the data density does not explain. Rather, its phase portrait suggests an incorrect inflow across the true ROA boundary in these regions. This is explained by the globally stable model architecture, which constrains all trajectories to converge to the origin and is therefore not a universal approximator of the locally stable system at hand, as opposed to our model architecture. 

The contrast is reflected in the trajectory reconstruction error in Table~\ref{tab:validation_metrics}, which is lower for our method than for the benchmark on the VdP systems in both mean and median. For the two-machine power system, the error is comparable in the mean and higher in the median. As seen in Fig.~\ref{fig:phase-portrait-TMPS-MK}, this system can be rendered globally asymptotically stable by only a small adjustment of the vector field, so the benchmark forgoes little accuracy by enforcing global stability while solving the easier problem of fitting the dynamics without characterizing the ROA.

\section{Conclusion and perspectives}\label{sec:conclusion}
We proposed a class of neural ODEs that universally approximates locally exponentially stable dynamics together with the ROA from trajectory data. We first derived sufficient conditions under which the proposed model design -- including a jointly optimized Lyapunov function -- universally approximates continuous dynamics, and then establish the existence of a target Lyapunov function with the properties required for these conditions to be satisfied. The resulting model system is provably stable with a ROA exactly characterized by the 1-sublevel set of the jointly optimized Lyapunov function, and we characterized conditions under which this learned ROA approximates the true one by measure of the set-difference. On the practical side, we proposed admissible model architectures and a learning objective based on system trajectory data, equipped with a bias-free regularizer that promotes identification of the ROA. The approach was validated on two-dimensional nonlinear systems with nonconvex ROA geometries.

There are multiple interesting open directions. The model classes $(\mathcal{F}, \mathcal{V}, \mathcal{P})_D$ were assumed to be jointly universally approximating; while this holds for $\mathcal{F}_D$ and $\mathcal{P}_D$, constructing an architecture for $\mathcal{V}_D$ that is universally approximating while satisfying the class condition \eqref{eq:zgvtilde} remains, to the best of our knowledge, unresolved. A second direction is to learn the equilibrium point jointly with the dynamics, and to extend the framework to systems with multiple equilibria. More broadly, the same design could be generalized from equilibrium points to systems with attractors. Finally, the exactly characterized ROA lends itself to defining a certified operating region for the learned model; this can potentially be utilized in other applications where it is critical to define the region supported by the model beyond the support of the data.

\section*{References}
\bibliographystyle{IEEEtran} 
\bibliography{biblio}
\section*{Appendix}
\subsection{Proof of Theorem~\ref{thm:universal}}\label{app:pf-universal}
In this section, we give the full proof of Theorem~\ref{thm:universal}. We begin by proving continuity as a separate Lemma, as this result will be recycled.
\subsubsection{Continuity of model dynamics} 
\begin{lemma}\label{lem:fhat-cont}
    Let $D$ and $E$ be domains in $\Rn$ with $0\in D$. Suppose there exist functions $\fnomapprox \in \cn{D,\mathbb{R}^n}$, $\Vapprox \in \cn{D,E}$, $\Wapprox \in \cn{D,\mathbb{R}^n}$, and $\Psi \in \cn{D \times E, \mathbb{R}}$ such that $\tilde{f}(0)=0$,
\begin{equation}
    \forall x\in D,\quad \Wapprox(x)=0\iff x=0,\quad\text{and}
\end{equation}
\begin{equation}\label{eq:psi-zero-lem}
        \forall z\in E,\quad \Psi(0,z)=0.
    \end{equation}
For $\Wnapprox:D\to\Rn$ defined by
\begin{equation}\label{eq:vhat-lem}
    \Wnapprox:x\mapsto
    \begin{cases}
        \Wapprox(x)/\|\Wapprox(x) \|& \text{if } \Wapprox(x)\neq 0,\\
        0&\text{else},
    \end{cases}
\end{equation} and $\Psi_{\Vapprox}:D\to\R$ defined by
\begin{equation}
    \Psi_{\Vapprox}:x\mapsto \Psi(x,\Vapprox(x))   
\end{equation} then $\fhat:D\to\Rn$ defined by
\begin{equation}\label{eq:fhat-lem}
    \fhat \triangleq \fnomapprox-\Wnapprox\Wnapprox^\top \fnomapprox+\Wnapprox\Psi_{\Vapprox}
\end{equation}
is in $\cn{D, \Rn}$.
\end{lemma}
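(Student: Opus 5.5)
We split the domain into the punctured domain $D\setminus\{0\}$ and the origin. On $D\setminus\{0\}$ the hypothesis $\Wapprox(x)=0\iff x=0$ gives $\Wapprox(x)\neq 0$. There $\Wnapprox=\Wapprox/\|\Wapprox\|$ is a quotient of continuous functions with nonvanishing denominator, so it is continuous. The map $x\mapsto(x,\Vapprox(x))$ is continuous from $D$ into $D\times E$, and $\Psi$ is continuous on $D\times E$. Hence $\Psi_{\Vapprox}$ is continuous on all of $D$ as a composition. Consequently $\fhat=\fnomapprox-\Wnapprox\Wnapprox^\top\fnomapprox+\Wnapprox\Psi_{\Vapprox}$ is a sum of products of continuous functions on the open set $D\setminus\{0\}$. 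Since $D$ is a domain, hence open, $D\setminus\{0\}$ is open, and continuity there is continuity of $\fhat$ at every point $x\neq 0$.

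It remains to check continuity at $x=0$, which is the only step needing an argument because $\Wnapprox$ generally has no limit at the origin. First we evaluate the value there. By definition $\Wnapprox(0)=0$, so $\fhat(0)=\fnomapprox(0)=0$.

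Next we bound $\fhat$ near the origin, using that $\Wnapprox$ is bounded: $\|\Wnapprox(x)\|\le 1$ for all $x$. Then $\|\Wnapprox\Wnapprox^\top\|\le 1$, and
\begin{equation*}
\|\fhat(x)-\fhat(0)\|\le \|\fnomapprox(x)\|+\|\Wnapprox(x)\|^2\|\fnomapprox(x)\|+\|\Wnapprox(x)\|\,|\Psi_{\Vapprox}(x)|\le 2\|\fnomapprox(x)\|+|\Psi(x,\Vapprox(x))|.
\end{equation*}
As $x\to 0$, $\|\fnomapprox(x)\|\to\|\fnomapprox(0)\|=0$ by continuity of $\fnomapprox$. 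Also $\Psi(x,\Vapprox(x))\to\Psi(0,\Vapprox(0))$ by continuity of $\Psi_{\Vapprox}$ shown above, and $\Psi(0,\Vapprox(0))=0$ by \eqref{eq:psi-zero-lem}, since $\Vapprox(0)\in E$. Hence $\fhat(x)\to 0=\fhat(0)$, and $\fhat\in\cn{D,\Rn}$.

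The main (mild) obstacle is exactly this origin step: the discontinuity of $\Wnapprox$ at $0$ must be absorbed by the vanishing factors $\fnomapprox$ and $\Psi_{\Vapprox}$. The key is to use only the uniform bound $\|\Wnapprox\|\le 1$ rather than any limit of $\Wnapprox$. One must also note that $\Psi_{\Vapprox}(0)=0$ requires $\Vapprox(0)\in E$, which is guaranteed by $\Vapprox\in\cn{D,E}$.
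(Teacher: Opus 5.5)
Your proof is correct and follows essentially the same route as the paper. Continuity on $D\setminus\{0\}$ comes from $\Wapprox\neq 0$ there, and at the origin you use only the bound $\|\Wnapprox\|\le 1$ to get $\|\fhat(x)-\fhat(0)\|\le 2\|\fnomapprox(x)\|+|\Psi(x,\Vapprox(x))|\to 0$; you are in fact slightly more explicit than the paper about continuity of $\Psi_{\Vapprox}$ and about $\Psi(0,\Vapprox(0))=0$ requiring $\Vapprox(0)\in E$.
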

\begin{proof}
    Since \eqref{eq:vhat-lem} is continuous except at $x=0$, let us analyse behavior here. For every $x\in D\backslash\{0\}:$
\begin{equation}
    \begin{split}
        &\|\fhat(x)-\fhat(0)\|\leq  \|\fnomapprox(x)-\fnomapprox(0)\|\\
        &\quad+\|\Wnapprox(x)\Wnapprox(x)^\top \fnomapprox(x)-\Wnapprox(0)\Wnapprox(0)^\top \fnomapprox(0)\|\\
        &\qquad +\|\Wnapprox(x)\Psi(x, \Vapprox(x))-\Wnapprox(0)\Psi(0, \Vapprox(0))\|\\
        &\leq \|\fnomapprox(x)\|+\|\Wnapprox(x)\|^2\|\fnomapprox(x)\|+\|\Wnapprox(x)\||\Psi(x, \Vapprox(x))|\\
        &=\|\fnomapprox(x)\|+\|\fnomapprox(x)\|+|\Psi(x, \Vapprox(x))|\to 0\,\text{as}\, x\to 0,
    \end{split}
\end{equation} where we used the conditions $\fnomapprox(0)=0$ and \eqref{eq:psi-zero-lem}. Thus, $\fhat$ is $\mathcal{C}$ at $x=0$, and it is a composition of $\mathcal{C}$ functions in $D\backslash\{0\}$, hence $\fhat\in \cn{D,\Rn}$.
\end{proof}
\subsubsection{Proof of Theorem~\ref{thm:universal}}
In this section, we prove the claims of Theorem~\ref{thm:universal} in detail.

\begin{proofof}{Theorem~\ref{thm:universal}}
 We can apply Lemma~\ref{lem:fhat-cont} to conclude that $\fhat\in\mathcal{C}(\Omega,\Rn)$.  To prove universal approximation, let $v:\Omega\to\Rn$ be defined by \begin{equation}\label{eq:v}
    \Wn:x\mapsto 
    \begin{cases}
        W(x)/\|W(x) \|& \text{if } W(x)\neq 0,\\
        0&\text{else},
    \end{cases}
\end{equation} analogously to \eqref{eq:vhat}. Then, for every $x\in\Omega$:
\begin{equation}\label{eq:universal:decomp}
    \begin{split}
        \|\fnom(x)-\fhat(x)\| = \ \|\fnom(x) - \fnomapprox(x) + \fnomapprox(x) -\fhat(x)\| \\
        \leq \ \|\fnom(x)-
\fnomapprox(x)\| +\|\Wnapprox(x)\|| \Wnapprox(x)^\top (f(x)-\fnomapprox(x))|\\ +\|\Wnapprox(x)\||\Wnapprox(x)^\top \fnom(x)-\Psi(x, \Vapprox(x))|\\
\leq \ (1+\|\Wnapprox(x)\|^2)\|\fnom(x)-\fnomapprox(x)\|\\+ \|\Wnapprox(x)\||(\Wnapprox(x)-\Wn(x))^\top \fnom(x)|\\ +\|\Wnapprox(x)\||\Wn(x)^\top\fnom(x)-\Psi(x, \Vapprox(x))|\\
\leq \ 2\|\fnom(x)-\fnomapprox(x)\|+\|\fnom(x)\|\|\Wnapprox(x)-\Wn(x)\|\\ +|\Psi(x, V(x))-\Psi(x, \Vapprox(x))|,
\end{split}
\end{equation} where we used \eqref{eq:psi-zero} and \eqref{eq:vtf} to deduce 
\begin{equation}
    \forall x\in\Omega,\quad v(x)^\top f(x)=\Psi(x,V(x)).
\end{equation}

Consider the first term in the upper bound of \eqref{eq:universal:decomp}. By assumption, there exists $\fnomapprox$ such that
\begin{equation}\label{eq:universal:p1}
    \forall x\in \Omega, \quad 2\|\fnom(x)-\tilde{f}(x)\|\leq \frac{\varepsilon}{3}.
\end{equation}

Next, we are interested in bounding the second term $\|f(x)\|\|\Wnapprox(x)-\Wn(x)\|$. By assumption, it vanishes at $x=0$. Further, note that for every $x\in\Omega\backslash \{0\}$:
\begin{equation}\label{eq:wnlpz}
\begin{split}    
    \|\Wnapprox(x)-\Wn(x)\|&=\Big\|\frac{\Wapprox(x)}{\|\Wapprox(x)\|}-\frac{W(x)}{\|W(x)\|}\Big\|\\
    &\leq \frac{\Big\|\Wapprox(x)\|W(x)\|-\Wapprox(x)\|\Wapprox(x)\|\Big\|}{\|\Wapprox(x)\|\|W(x)\|}\\&\quad +\frac{\Big\|\Wapprox(x)\|\Wapprox(x)\| -W(x)\|\Wapprox(x)\|\Big\|}{\|\Wapprox(x)\|\|W(x)\|}\\
    &\leq\frac{|\|W(x)\|-\|\Wapprox(x)\||}{\|W(x)\|}+\frac{\|\Wapprox(x)-W(x)\|}{\|W(x)\|}\\
    &\leq \frac{2}{\|W(x)\|}\|\Wapprox(x)-W(x)\|.
\end{split}
\end{equation} 
First, consider what happens close to the singularity $x=0$. By \eqref{eq:fW-limit}, there exists constants $\delta>0$ and $M>0$ such that
\begin{equation*}    
    \forall x\in B_{\delta}\backslash\{0\}, \quad \frac{\|f(x)\|}{\|W(x)\|}\leq \frac{M}{2}, 
\end{equation*} 
which implies that for every $x\in B_{\delta}\backslash\{0\}$:
\begin{equation*}
    \begin{split}
            \|f(x)\|\|\Wnapprox(x)-\Wn(x)\|&\leq 2\frac{\|f(x)\|}{\|W(x)\|}\|\Wapprox(x)-W(x)\|\\
            &\leq M\|\Wapprox(x)-W(x)\|.
    \end{split}
\end{equation*}
Next consider the compact set $K\triangleq\Omega\backslash B_{\delta}$. By continuity and \eqref{eq:w-zero}, $f$ is bounded and $\|W\|$ is bounded away from zero on $K$. Therefore, for every $x \in K$:
\begin{multline*}
    \|f(x)\|\|\Wnapprox(x)-\Wn(x)\|\leq R_1\|\Wnapprox(x)-\Wn(x)\|\\
    \leq R_1R_2\|\Wapprox(x)-W(x)\|,
\end{multline*} where $R_1=\max\limits_{x\in K} \|f(x)\|$ and $R_2=\max\limits_{x\in K}\frac{2}{\|W(x)\|}$.
Therefore, choosing $\Wapprox$ such that 
\begin{equation}
    \forall x\in\Omega, \quad \|\Wapprox(x)-W(x)\|\leq\frac{\varepsilon}{3C}
\end{equation}
where $C=\max\{R_1 R_2,2M \}$ gives
\begin{equation}\label{eq:universal:p2}
    \forall x\in \Omega, \quad \|f(x)\|\|\Wnapprox(x)-\Wn(x)\|\leq\frac{\varepsilon}{3}.
\end{equation}
Finally, with regard to the third term, note that for every $x\in \Omega$:
\begin{equation}
    \forall z,z'\in \Lambda, \quad |\Psi(x, z)-\Psi(x, z')|\leq L|z-z'|.
\end{equation}Take $\Vapprox\in \mathcal{C}(\Omega,\Lambda)$ such that
\begin{equation}
    \forall x\in \Omega, \quad |V(x)-\Vapprox(x)|< \frac{\varepsilon}{3L}.
\end{equation}
Then, for every $x\in \Omega$:
\begin{equation}\label{eq:universal:p3}
    |\Psi(x, V(x))-\Psi(x, \Vapprox(x))|\leq  L|V(x)-\Vapprox(x)|\leq \frac{\varepsilon}{3}.
\end{equation}

Gathering \eqref{eq:universal:p1}, \eqref{eq:universal:p2}, and \eqref{eq:universal:p3} in \eqref{eq:universal:decomp} we have
\begin{equation}
    \forall x\in \Omega, \quad \|f(x)-\fhat(x)\|\leq \frac{\varepsilon}{3}+\frac{\varepsilon}{3}+\frac{\varepsilon}{3}=\varepsilon.
\end{equation}
\end{proofof}
\subsection{Proof of Lemma~\ref{lem:V1}}\label{app:pf-v1}
In this section, we prove Lemma~\ref{lem:V1}.

\begin{proofof}{Lemma~\ref{lem:V1}}
Suppose $U$ satisfies the sufficient conditions of Theorem~\ref{thm:maximal}. Since $\tanh\in \cd{\R_{\geq 0}, \R}$ is positive definite, we get that $V$ is in $\cd{A,\R}$ and is positive definite.
Moreover, by the chain rule and \eqref{eq:mlpv-dis}, we get:
\begin{equation}
    \begin{split}
        \forall x\in A,\quad \nabla V(x)&=\nabla \tanh(U(x))\\
        &=(1-\tanh(U(x))^2)\nabla U(x)\\&=(1-V(x)^2)\nabla U(x)\\
        \implies\gV(x)^\top f(x)&=(1-V(x)^2)\nabla U(x)^\top f(x)\\
        &=-(1-V(x)^2)\phi(x).
    \end{split}
\end{equation}
Finally, by \eqref{eq:mlpv-da} and the property
\begin{equation}
    \tanh(z)\to1\quad \text{as}\,z\to\infty,
\end{equation}
we get:
\begin{equation}V(x)=\tanh(U(x))\to 1\quad \text{ as } x\to\partial A\text{ and/or }\|x\|\to\infty.\end{equation} 

The proof of the converse statement follows the same lines, noting that $U: x \mapsto \tanh^{-1}(V(x))$ where $\tanh^{-1}\in\cd{[0,1), \R}$ is a positive definite function which satisfies
\begin{equation}\forall z \in [0, 1), \quad \frac{d}{dz}\tanh^{-1}(z)=\frac{1}{1-z^2},\quad\text{and}\end{equation}
\begin{equation}\tanh^{-1}(z)\to\infty\quad\text{as}\,z\to 1.\end{equation}
\end{proofof}
\subsection{Proof of Theorem~\ref{thm:pdesol}}\label{app:pf-pdesol}
In this section, we derive the full proof of Theorem~\ref{thm:pdesol}. Before introducing the main proof, we derive two helpful Lemmas. In particular, we need to show that the integral in \eqref{eq:U} is convergent, and that we may differentiate under the integral sign. Since trajectories of $\pi$ starting in the ROA $\roa$ converge to the equilibrium point $x=0$ as $t\to\infty$, it suffices to certify convergence rates of $\pi$ and derivatives $D_x\pi$ for small $x$. In Lemma~\ref{lem:flow}, we show that they converge exponentially and uniformly under Assumption~\ref{ass:roa} and \ref{ass:hurwitz} and $f\in \mathcal{C}^2$ close to the origin (Assumption~\ref{ass:cn}). It is further instrumental that $D_x\pi(t,x)$ remains bounded as $t\to\infty$ for $x\in \roa$. In Lemma~\ref{lem:dx-roa}, we show that this holds under Assumption~\ref{ass:roa}--\ref{ass:hurwitz}. It is finally instrumental that $D_x^2\pi(t,x)$ remains bounded as $t\to\infty$ for sufficiently small $x$. In Lemma~\ref{lem:dx2-bound}, we show that this holds under Assumption~\ref{ass:roa}--\ref{ass:hurwitz}.

\subsubsection{Regularity of the flow around the equilibrium}
In the following Lemma, we show that the flow and its derivatives of a system satisfying Assumption~\ref{ass:roa} converge exponentially and uniformly close to the equilibrium point whenever it is exponentially stable (Assumption~\ref{ass:hurwitz}) and sufficiently regular (Assumption~\ref{ass:cn}).
\begin{lemma}\label{lem:flow}
    Consider the system $\Sigma_{f,D}$ with flow $\pi$. Suppose that Assumption~\ref{ass:roa} and \ref{ass:hurwitz} are satisfied, and that $f\in \mathcal{C}^2$ close to the origin (Assumption~\ref{ass:cn}). Then there exists a neighborhood $\mathcal{U}$ of the origin such that 
    \begin{align}\label{eq:flow-decay}
    \forall t\geq 0,\quad \forall x\in \mathcal{U},&\quad \|\pi(t,x)\|\lesssim \exp{(-\alpha t)}
\end{align} where $\alpha\triangleq\min_{\lambda\in \sigma(A)}|\Re\lambda|$. Moreover,
\begin{equation}\label{eq:dx-at-0}
    \forall t\geq 0,\quad D_x\pi(t,0)=\exp(\tilde{A}t),
\end{equation} where $\tilde{A}\in \R^{n\times n}$ is similar to $A$, and
\begin{equation}\label{eq:dx-flow-decay}
    \forall t\geq 0,\quad\forall x\in \mathcal{U},\quad \|D_x\pi(t,x)\|\lesssim\exp(-\alpha t).
\end{equation} 
\end{lemma}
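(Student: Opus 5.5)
The plan is to work in a small ball around the origin where the linearization $A=Df(0)$ dominates, establish exponential decay of $\pi$ first, and then feed that decay into the variational equation for $D_x\pi$.

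\emph{Step 1 (decay of the flow).} Since $A$ is Hurwitz (Assumption~\ref{ass:hurwitz}), for any $\epsilon\in(0,\alpha)$ there is a symmetric $P\succ 0$ satisfying
\begin{equation*}
A^\top P+PA\preceq -2(\alpha-\epsilon)P.
\end{equation*}
To get such a $P$, I would pick a basis in which $A$ is block-diagonal with Jordan blocks whose off-diagonal entries are scaled down to size $\epsilon$; this change of basis is where a matrix $\tilde A$ similar to $A$ naturally appears. The Taylor expansion \eqref{eq:taylor}, with remainder $O(\|x\|^2)$ because $f\in\mathcal{C}^2$ near $0$, gives
\begin{equation*}
\tfrac{d}{dt}\,x^\top P x\leq -2(\alpha-2\epsilon)\,x^\top P x
\end{equation*}
on a small sublevel set $\mathcal{U}=\{x^\top Px<c\}$. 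This set is positively invariant, so $\|\pi(t,x)\|\lesssim e^{-(\alpha-2\epsilon)t}$ uniformly on $\mathcal{U}$.

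\emph{Obstacle.} Reaching exactly the rate $\alpha$ in \eqref{eq:flow-decay}, rather than $\alpha-2\epsilon$, is the delicate point. Nontrivial Jordan blocks for eigenvalues with $|\Re\lambda|=\alpha$ produce a polynomial factor, so the sharp statement can fail in that case. I would either interpret $\alpha$ as any rate strictly below $\min|\Re\lambda|$, which is all the later integrability arguments need, or assume diagonalizability on the critical eigenvalues and then take $\epsilon=0$ in the linear part. The nonlinear remainder is then controlled by the integrable perturbation argument of Step 3.

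\emph{Step 2 (derivative at the equilibrium).} The flow is $\mathcal{C}^1$ in $x$ \cite{Sideris2013}, so $M(t)\triangleq D_x\pi(t,x)$ solves the variational equation
\begin{equation*}
\dot M=Df(\pi(t,x))\,M,\qquad M(0)=I.
\end{equation*}
At $x=0$ we have $\pi(t,0)\equiv 0$, so this reduces to $\dot M=AM$ and hence $M(t)=\exp(At)$. This gives \eqref{eq:dx-at-0} with $\tilde A=A$, or with a similar matrix if one works in the coordinates of Step 1.

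\emph{Step 3 (decay of the derivative).} For $x\in\mathcal{U}$, I would write the variational equation in variation-of-constants form:
\begin{equation*}
M(t)=e^{At}+\int_0^t e^{A(t-s)}\big(Df(\pi(s,x))-A\big)M(s)\,ds.
\end{equation*}
Because $f\in\mathcal{C}^2$, $Df$ is Lipschitz near $0$, so $\|Df(\pi(s,x))-A\|\lesssim\|\pi(s,x)\|\lesssim e^{-\beta s}$ by Step 1. Set $\beta$ equal to the decay rate obtained in Step 1, and set $m(t)\triangleq e^{\beta t}\|M(t)\|$. Using $\|e^{At}\|\lesssim e^{-\beta t}$, the integral equation yields
\begin{equation*}
m(t)\lesssim 1+\int_0^t e^{-\beta s}m(s)\,ds.
\end{equation*}
Grönwall's inequality then gives
\begin{equation*}
m(t)\lesssim \exp\Big(C\int_0^\infty e^{-\beta s}\,ds\Big)<\infty,
\end{equation*}
uniformly for $x\in\mathcal{U}$. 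This is \eqref{eq:dx-flow-decay}, with the same caveat on the exact rate as in Step 1.
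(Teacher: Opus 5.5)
Your argument is correct, and it takes a genuinely different route from the paper.

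\textbf{The paper's route.} The paper invokes Hartman's $\mathcal{C}^1$ linearization theorem to get a diffeomorphism $H$ with $H(\pi(t,x))=\exp(At)H(x)$. It then reads off both $\|\pi(t,x)\|$ and $D_x\pi(t,x)=DH^{-1}(H(\pi(t,x)))\exp(At)DH(x)$ as bounded by a constant times $\|\exp(At)\|$. The value at the origin comes out as the conjugate $[DH(0)]^{-1}\exp(At)DH(0)$.

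\textbf{Your route.} You use an elementary quadratic Lyapunov function to obtain a uniform exponential rate, slightly below $\alpha$. You then feed that rate into the variation-of-constants form of the variational equation and close with Gr\"onwall, using only that $Df$ is Lipschitz near $0$.

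\textbf{Comparison.} The paper's approach gains brevity: a single conjugacy formula yields both estimates at once. Yours gains several things:
\begin{itemize}
\item it is self-contained, with no appeal to Hartman's theorem, and really uses only $\mathcal{C}^{1,1}$ regularity near the origin;
\item it gives explicit uniform constants;
\item it identifies $\tilde A=A$ directly from $\pi(t,0)\equiv 0$.
\end{itemize}
Your semisimple variant also goes through. Since $\|f(x)-Ax\|\lesssim\|x\|^2$, the weighted remainder $e^{\alpha s}\|f(\pi(s,x))-A\pi(s,x)\|$ is bounded by $\|\pi(s,x)\|\,e^{\alpha s}\|\pi(s,x)\|$. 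This is an integrable kernel times the Gr\"onwall variable, so any positive rough rate from Step 1 suffices.

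\textbf{Your caveat is justified.} The paper's proof passes from $\|\exp(At)\|$ to $\exp(-\alpha t)$ without comment. That step fails when an eigenvalue with $\Re\lambda=-\alpha$ has a nontrivial Jordan block. For example, $f(x)=Ax$ with $A$ the $2\times 2$ Jordan block at $-1$ gives $\|\pi(t,x)\|\sim t e^{-t}$ for $x=(0,\delta)$. So \eqref{eq:flow-decay} and \eqref{eq:dx-flow-decay} hold as stated only when the critical eigenvalues are semisimple; in general they hold with any rate $\beta<\alpha$. As you note, this weaker form is all that the integrability arguments in the proof of Theorem~\ref{thm:pdesol} require.

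\textbf{Minor point.} In Step 1 you need $\epsilon<\alpha/2$, not merely $\epsilon<\alpha$, for the rate $\alpha-2\epsilon$ to be positive.
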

\begin{proof}
    Under Assumption~\ref{ass:roa} and \ref{ass:hurwitz} and the local $\mathcal{C}^2$-condition on $f$ (Assumption~\ref{ass:cn}), 
    the Hartman theorem \cite{Perko2001} certifies that
    there exist a neighborhood $\mathcal{U}$ of the origin and a $\mathcal{C}^1$-diffeomorphism $H:\mathcal{U}\to H(\mathcal{U})$ linearizing the flow,
\begin{equation}
    \begin{split}
        \forall t\geq 0,\quad \forall x\in \mathcal{U}, \quad H(\pi(t,x))=\exp{(At)}H(x),\\
    \end{split}
\end{equation} while keeping the origin fixed and having non-vanishing Jacobian there. Since $H$, $H^{-1}$, and $DH^{-1}$ are bounded by continuity, and $H(0)=0$, 
\begin{equation}
    \begin{split}
        \forall t\geq 0,\quad \forall x\in \mathcal{U},\quad \|\pi(t,x)\|&=\|H^{-1}\left(\exp{(At)}H(x)\right)\|\\
        &\lesssim \|\exp(At)\|,
    \end{split}
\end{equation} which implies \eqref{eq:flow-decay}.
Moreover, by the chain rule, we get
    \begin{equation}\label{eq:dx-flow}
    \begin{split}
        \forall t\geq 0,\quad\forall x\in \mathcal{U},\quad  
        D_x(H\circ\pi)(t, x) = D_x\exp(At)H(x) \\
        \implies DH(\pi(t,x))D_x\pi(t,x) = \exp(At) DH(x)\\
        \implies D_x\pi(t,x) = [DH(\pi(t,x))]^{-1}\exp(At)DH(x).
    \end{split}
    \end{equation} At $x=0$, we get
\begin{equation}
    \begin{split}
            \forall t\geq 0,\quad D_x\pi(t,0)=[DH(\pi(t,0))]^{-1}\exp(At)DH(0)\\=[DH(0)]^{-1}\exp(At)DH(0)=\exp(\tilde{A}t),
    \end{split}
\end{equation} where $\tilde{A}\in \R^{n\times n}$ is similar to $A$, which gives \eqref{eq:dx-at-0}.
   Finally, we can simplify $x\mapsto [DH(x)]^{-1}$ as follows:
    \begin{equation}
        \begin{split}
             \forall x\in \mathcal{U},\quad &D(H^{-1}\circ H)(x)=Dx\\
            \implies &DH^{-1}(H(x)) DH(x)=I\\
            \implies & [DH(x)]^{-1} = DH^{-1}(H(x)). 
        \end{split}
    \end{equation} Thus, for all $t\geq 0$ and $x\in \mathcal{U},$
    \begin{equation}
        \begin{split}
                D_x\pi(t,x)=DH^{-1}(H(\pi(t,x)))\exp(At)DH(x).
        \end{split}
    \end{equation} 
Since $H, \,DH,\, \text{and}\,DH^{-1}$ are bounded by continuity, we get
\begin{equation}
    \begin{split}
        \forall t\geq 0,\quad \forall x\in \mathcal{U},\\ \|D_x\pi(t,x)\|&=\|DH^{-1}(H(\pi(t,x)))\exp(At)DH(x)\|\\\lesssim \|\exp(At)\|,
    \end{split}
\end{equation} which implies \eqref{eq:dx-flow-decay}. 
\end{proof}
\subsubsection{Convergence of the flow derivative with time in the ROA}
In the following Lemma, we show that the first derivative of the flow with respect to the initial point, $D_x\pi(t,x)$, vanishes as $t\to\infty$ for any $x\in\roa$.
\begin{lemma}\label{lem:dx-roa}
Consider the system $\Sigma_{f,D}$ with flow $\pi$, and suppose Assumption~\ref{ass:roa}--\ref{ass:hurwitz} hold. Then,
    \begin{equation}
        \forall x\in\roa,\quad D_x\pi(t,x)\to 0\,\quad \text{as}\, t\to\infty.
    \end{equation}
\end{lemma}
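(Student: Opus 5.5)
The plan is to reduce the statement to the local estimate \eqref{eq:dx-flow-decay} of Lemma~\ref{lem:flow}, using the semigroup property of the flow together with the chain rule.

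Fix $x\in\roa$ and let $\mathcal{U}$ be the neighborhood from Lemma~\ref{lem:flow}. Shrink $\mathcal{U}$ if necessary so that it is an open set containing the origin with $\mathcal{U}\subset\mathcal{N}\subset\roa$. Since $\pi(t,x)\to 0$, there is a finite time $t_0\geq 0$ with $\pi(t_0,x)\in\mathcal{U}$. Because $\mathcal{U}$ is open and $\pi(t_0,\cdot)$ is continuous (indeed $\mathcal{C}^1$ on $\roa$ by Assumption~\ref{ass:cn}), there is a neighborhood $\mathcal{O}$ of $x$ with $\pi(t_0,\mathcal{O})\subset\mathcal{U}$.

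For $t\geq t_0$ and $y\in\mathcal{O}$, the group property gives $\pi(t,y)=\pi(t-t_0,\pi(t_0,y))$. Differentiating in $y$ at $y=x$ yields
\begin{equation*}
D_x\pi(t,x)=D_x\pi\big(t-t_0,\pi(t_0,x)\big)\,D_x\pi(t_0,x).
\end{equation*}
The second factor is a fixed finite matrix, since $\pi\in\mathcal{C}^1(\R_{\geq 0}\times\roa)$. The first factor is evaluated at the point $\pi(t_0,x)\in\mathcal{U}$, so \eqref{eq:dx-flow-decay} bounds it by $C\exp(-\alpha(t-t_0))$, where $\alpha>0$ by Assumption~\ref{ass:hurwitz}. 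Hence
\begin{equation*}
\|D_x\pi(t,x)\|\lesssim \exp(-\alpha(t-t_0))\,\|D_x\pi(t_0,x)\|\to 0 \quad\text{as } t\to\infty,
\end{equation*}
which proves the claim. As a bonus, the convergence is exponential, at a rate that is uniform in $x$ but with a prefactor depending on $x$.

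The main point requiring care is justifying the chain rule through the semigroup identity. Two things must be checked. First, the flow is defined and $\mathcal{C}^1$ on a neighborhood of $(t_0,x)$, which holds because $\roa$ is open and invariant. Second, \eqref{eq:dx-flow-decay} holds with a constant that is uniform on $\mathcal{U}$, which Lemma~\ref{lem:flow} provides via boundedness of $DH$ and $DH^{-1}$; if necessary, shrink $\mathcal{U}$ to have compact closure inside the Hartman chart.
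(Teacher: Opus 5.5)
Your argument is correct, but it takes a different route from the paper.

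The paper does not use Lemma~\ref{lem:flow} here. It writes the variational equation $\partial_t D_x\pi=Df(\pi)D_x\pi$ as the Hurwitz system $\dot z=Az$ perturbed by $B_\xi(t)z$, where $B_\xi(t)=Df(\pi(t,\xi))-Df(0)\to 0$ because $\pi(t,\xi)\to 0$ and $Df$ is continuous. It then invokes the vanishing-perturbation result of \cite{khalil2002nonlinear} (Corollary~9.1, with the Lyapunov function $x^\top Px$ of $A$) to get global exponential stability of $z=0$ for $\dot z=(A+B_\xi(t))z$.

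You instead factor $D_x\pi(t,x)=D_x\pi(t-t_0,\pi(t_0,x))\,D_x\pi(t_0,x)$ via the semigroup property and transport the local estimate \eqref{eq:dx-flow-decay} along the orbit. Given Lemma~\ref{lem:flow}, this is shorter and complete. The neighborhood $\mathcal{O}$ is not even needed, since the chain rule only uses differentiability of $\pi(t_0,\cdot)$ at $x$ and of $\pi(t-t_0,\cdot)$ at $\pi(t_0,x)\in\roa$.

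The price is that the real work is delegated to Lemma~\ref{lem:flow}. That means Hartman's $\mathcal{C}^1$ linearization with its local $\mathcal{C}^2$ hypothesis, plus the requirement that the conjugacy and the bounds on $DH$, $DH^{-1}$ hold along entire forward orbits in $\mathcal{U}$. The paper's argument only needs $f\in\mathcal{C}^1(\roa)$ and $A$ Hurwitz.

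The paper's route also yields exponential bounds uniform in the initial time for the time-varying system, which the converse-Lyapunov step in Lemma~\ref{lem:dx2-bound} reuses. Your factorization gives these too once you note that the transition matrix from time $s$ to time $t$ is $D_x\pi(t-s,\pi(s,x))$. As written, though, you only control it from $s=0$.

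Finally, your bonus rate $\alpha=\min_{\lambda\in\sigma(A)}|\Re\lambda|$ is inherited from Lemma~\ref{lem:flow}. It is exact only when the slowest eigenvalues have trivial Jordan blocks; otherwise any smaller positive rate works. This is irrelevant here, since the lemma only needs convergence to zero.
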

\begin{proof}
     By differentiating \eqref{eq:flow} with respect to $x$ and permuting the order of the partial derivatives, we get that $D_x\pi$ 
    satisfies the linear variational equation \cite{Sideris2013}:
    \begin{equation}\label{eq:linear-variational}
        \begin{split}
            \forall t\geq 0,\quad \forall x\in\roa,\\
            \partial_t D_x\pi(t,x)=Df(\pi(t,x))D_x\pi(t,x),\quad D_x\pi(0,x)=I.
        \end{split}
    \end{equation}
    We can view \eqref{eq:linear-variational} as a perturbed ODE system: let $z_\xi:t\mapsto  D_x\pi(t,\xi)$ and rewrite \eqref{eq:linear-variational} as
    \begin{equation}\label{eq:perturbed}
        \begin{split}
                   \forall \xi\in\roa,\quad\forall t\geq 0,\\
                   \dot{z}_\xi(t)=Az_\xi(t)+B_\xi(t)z_\xi(t),\quad z_\xi(0)=I\in\R^{n\times n},
        \end{split}
    \end{equation} where $A\triangleq Df(0)$ and $B_\xi:t\mapsto Df(\pi(t,\xi))-Df(0)$.
The nominal system is
\begin{equation}
    \forall\xi\in\roa,\quad \dot{z}_\xi(t)=Az_\xi(t),\quad z_\xi(0)\in\R^{n\times n},
\end{equation} where the equilibrium point $z_\xi=0$ is a globally exponentially stable since $A$ is Hurwitz by Assumption~\ref{ass:hurwitz}.
Furthermore, the perturbation $g_\xi:(t, z_\xi)\mapsto B_\xi(t)z_\xi$ vanishes at $z_\xi=0$:
\begin{equation}
    \forall\xi\in\roa,\quad \forall t\geq 0,\quad g_\xi(t,0)=0.
\end{equation}
Moreover, by property of the ROA, we get
\begin{equation}
    \forall \xi\in\roa,\quad \pi(t, \xi)\to 0\quad \text{as}\,t\to\infty.
\end{equation}
Since $Df\in \mathcal{C}(\roa)$ by Assumption~\ref{ass:roa}, this implies that
\begin{equation}
     \forall \xi\in\roa,\quad B_\xi(t)\to 0\quad \text{as}\,t\to\infty.
\end{equation}
Thus, with $\gamma_\xi:t\mapsto \|B_\xi(t)\|$ we have that for every $\xi \in \mathcal{R}$,
\begin{equation}
    \forall z_\xi \in \mathbb{R}^{n \times n}, \quad \|g(t,z_\xi)\| \leq \gamma_\xi(t) \|z_\xi\|
\end{equation} and for every $\varepsilon>0$  there exists $\eta_\xi>0$ such that $\int_{0}^t\gamma_\xi(\tau)d\tau\leq \varepsilon t+\eta_\xi$. Thus, by Corollary 9.1 in \cite{khalil2002nonlinear}, we can conclude\footnote{Using the Lyapunov function $V=x^\top P x$ where $P\succ 0$ solves $A^\top P+PA=-I$} that for all $\xi\in \roa$ the equilibrium point $z_\xi=0$ of the system \eqref{eq:perturbed} is globally exponentially stable.
\end{proof}
We shall further be interested in ensuring that $D^2_x\pi(t,x)$ remains bounded for $x$ close to the origin as $t\to\infty$.
\begin{lemma}\label{lem:dx2-bound}
    Consider the system $\Sigma_{f,D}$ with flow $\pi$, and suppose Assumption~\ref{ass:roa}--\ref{ass:hurwitz} hold. Then there exists a neighborhood $\mathcal{U}$ of the origin and a constant $c>0$ such that
    \begin{equation}
        \forall t\geq 0,\quad \forall x\in \mathcal{U},\quad \|D_x^2\pi(t,x)\|\leq c.
    \end{equation}
\end{lemma}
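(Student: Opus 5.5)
The plan is to differentiate the linear variational equation \eqref{eq:linear-variational} once more with respect to $x$. This gives a linear inhomogeneous equation for $Z(t,x)\triangleq D_x^2\pi(t,x)$:
\begin{equation*}
    \partial_t Z(t,x)=Df(\pi(t,x))Z(t,x)+D^2f(\pi(t,x))[D_x\pi(t,x),D_x\pi(t,x)],\quad Z(0,x)=0.
\end{equation*}
This step is legitimate on a neighborhood $\mathcal{N}$ of the origin. There $f\in\mathcal{C}^3$ by Assumption~\ref{ass:cn}, so $\pi\in\mathcal{C}^3(\R_{\geq0}\times\mathcal{N})$ and the order of derivatives may be permuted \cite{Sideris2013}.

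We then shrink the neighborhood $\mathcal{U}$ of Lemma~\ref{lem:flow} in two ways. First, by stability of the origin, trajectories starting in $\mathcal{U}$ must remain in a compact ball $\bar B_r\subset\mathcal{N}$ for all $t\geq0$. On that ball $\|D^2f\|\leq M$ by continuity. Second, $r$ must be small enough that $\|Df(y)-A\|\leq\mu$ for $y\in\bar B_r$. Here $\mu$ is chosen so that $A+B$ remains uniformly exponentially stable for every perturbation with $\|B\|\leq\mu$. We take $P\succ0$ solving $A^\top P+PA=-I$ and require $\mu<1/(2\|P\|)$. Then $V(z)=z^\top Pz$ gives $\dot V\leq-(1-2\mu\|P\|)\|z\|^2$ along $\dot z=Df(\pi(t,x))z$, uniformly in $x\in\mathcal{U}$. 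Consequently the state transition matrix $\Phi_x(t,s)$ of the homogeneous part satisfies $\|\Phi_x(t,s)\|\leq k e^{-\beta(t-s)}$, with constants $k,\beta>0$ independent of $x\in\mathcal{U}$.

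We then apply the variation of constants formula and the bound \eqref{eq:dx-flow-decay} from Lemma~\ref{lem:flow}:
\begin{equation*}
    \|Z(t,x)\|\leq\int_0^t k e^{-\beta(t-s)}M\|D_x\pi(s,x)\|^2ds\lesssim\int_0^t e^{-\beta(t-s)}e^{-2\alpha s}ds.
\end{equation*}
The right-hand side is bounded uniformly in $t\geq0$ and $x\in\mathcal{U}$. In fact it is at most $1/\beta$, and it even decays exponentially. This yields the constant $c$.

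The main obstacle is obtaining uniformity in $x$. Lemma~\ref{lem:dx-roa} only gives pointwise convergence of $D_x\pi$, so it cannot supply a uniform bound. This is why the argument uses the uniform estimate \eqref{eq:dx-flow-decay} together with the uniform closeness of $Df$ to $A$ on a small invariant ball. The delicate point is to choose $\mathcal{U}$ inside the Hartman neighborhood and also inside the $\delta$-ball given by stability. This guarantees that trajectories starting in $\mathcal{U}$ never leave the region where both estimates hold.
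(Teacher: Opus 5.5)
Your proposal is correct. It uses the same second variational equation as the paper, but it obtains the key uniformity in the initial point by a different mechanism.

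The paper treats the homogeneous part $\dot w_\xi=(A+B_\xi(t))w_\xi$ of \eqref{eq:w-perturb} pointwise in $\xi$, in four steps:
\begin{enumerate}
\item It imports global exponential stability from the proof of Lemma~\ref{lem:dx-roa}, whose constants depend on $\xi$.
\item It invokes a converse Lyapunov theorem to obtain a $\xi$-dependent function $V_\xi$ with constants $c_i^\xi$.
\item It applies an ultimate-boundedness lemma with the forcing bound $\|h_\xi\|\le\delta$ to get $\|w_\xi(t)\|\le c_\xi\delta$.
\item It asserts, rather than proves, that the $c_i^\xi$ can be chosen to vary continuously in $\xi$, so that a single constant $c$ works on a small neighborhood.
\end{enumerate}

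You instead use stability of the origin to confine every trajectory from $\mathcal{U}$ to one compact ball on which $\|Df-A\|\le\mu<1/(2\|P\|)$. Then the single quadratic function $z^\top Pz$ certifies uniform exponential decay of the transition matrix $\Phi_x(t,s)$, and variation of constants with $Z(0,x)=0$ finishes the argument.

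What your route buys:
\begin{itemize}
\item Uniformity is proved with explicit constants, avoiding both the converse Lyapunov theorem and the unproved continuity claim.
\item It explicitly keeps trajectories inside a compact set where $D^2f$ exists and is bounded. The paper takes this for granted when it says $D^2f$ is bounded by continuity on $\mathcal{N}$.
\item It makes the Hartman-based estimate unnecessary. Since $D_x\pi(t,x)=\Phi_x(t,0)$, your own Lyapunov bound already gives $\|D_x\pi(t,x)\|\le ke^{-\beta t}$ uniformly on $\mathcal{U}$, so you could drop the appeal to \eqref{eq:dx-flow-decay}.
\end{itemize}

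The paper's route reuses the machinery of Lemma~\ref{lem:dx-roa}. In principle that machinery gives a bound along any individual trajectory in $\roa$ where $D^2f$ is available, without requiring $Df$ to stay close to $A$. That extra reach is not needed for a statement about a neighborhood of the origin, and it is exactly what forces the paper into its hand-waved uniformity step.
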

\begin{proof}
    By Assumption~\ref{ass:cn} and Theorem 9.41 in \cite{Rudin1976pom}, the equation \eqref{eq:flow} can be differentiated twice with respect to $x$ and the order of partial derivatives permuted to give the variational equation:
    \begin{equation}\label{eq:dx2vari}
        \begin{split}
            \forall t\geq 0,\quad\forall x\in\roa,
            \\\partial_t D_x^2\pi(t,x)=Df(\pi(t,x))D_x^2\pi(t,x) + h(t,x)
        \end{split}
    \end{equation} where $h:(t,x)\mapsto D^2f(\pi(t,x))[D_x\pi(t,x),D_x\pi(t,x)]$.
    Similar to the proof of Lemma~\ref{lem:dx-roa}, we can view this as a perturbed system: let $w_\xi:t\mapsto D_x^2\pi(t,\xi)$ and rewrite \eqref{eq:dx2vari} as
\begin{equation}\label{eq:w-perturb}
    \begin{split}
            \forall\xi\in\roa,\quad\forall t\geq 0,\\
    \dot{w_\xi}(t)=Aw_\xi(t)+B_\xi(t)w_\xi(t)+h_\xi(t),\quad w_\xi(0)=D_x^2(0,\xi)
    \end{split}
\end{equation} where $A\triangleq Df(0)$, $B_\xi:t\mapsto Df(\pi(t,\xi))-Df(0)$, and $h_\xi:t\mapsto h(t,\xi)$. In this case, consider the nominal system:
\begin{equation}\label{eq:wnom}
    \begin{split}
            \forall\xi\in\roa,\quad\forall t\geq 0,\\
    \dot{w_\xi}(t)=(A+B_\xi(t))w_\xi(t),\quad w_\xi(0)\in \R^{n\times n\times n}.
    \end{split}
\end{equation} In the proof of Lemma~\ref{lem:dx-roa}, we showed that for every $\xi\in\roa$ the ODE \eqref{eq:wnom} has a globally exponentially stable equilibrium point at $w_\xi=0$. We claim that for sufficiently small $\xi$, the perturbed system \eqref{eq:w-perturb} satisfies the conditions of Lemma 9.2 in \cite{khalil2002nonlinear}, which certifies global ultimate boundedness of $w_\xi$. In particular, Theorem 4.14 in \cite{khalil2002nonlinear} certifies that there exists a Lyapunov function $V_\xi$ for \eqref{eq:wnom} which satisfies the conditions 
\begin{align}
    c_1^\xi\|x\|^2\leq V_\xi\leq c^\xi_2\|x\|^2\\
    \partial_tV_\xi+\nabla_xV_\xi^\top f\leq -c^\xi_3\|x\|^2\\
    \|\nabla_x V_\xi\|\leq c^\xi_4\|x\|^2
\end{align} for all $(t,x)\in \R_{\geq 0}\times \Rn$ for some positive constants $\{c^i_\xi\}_{i=1}^4$. Moreover,  $D^2f$ is bounded by continuity in $\mathcal{N}$ (Assumption~\ref{ass:cn}), and $t\mapsto D_x\pi(t, \xi)$ is uniformly bounded in $\xi$ in some neighborhood $\mathcal{U}$ of the origin with $\mathcal{U}\subset \mathcal{N}$ (Lemma~\ref{lem:flow}), so there exists a constant $\delta>0$ such that
\begin{equation}
    \forall \xi\in\mathcal{U},\quad \forall t\geq 0,\quad \|h_\xi(t)\|\leq \delta.
\end{equation} The conditions of Lemma 9.2 in \cite{khalil2002nonlinear}, are satisfied and we can conclude that for every $\xi\in\mathcal{U}$, the solution $w_\xi$ of \eqref{eq:perturbed} is globally ultimately bounded, that is,
 \begin{equation}
    \forall\xi\in\mathcal{U}\exists b_\xi>0:\quad \forall t\geq 0,\quad \|w_\xi(t)\|\leq b_\xi.
\end{equation} Specifically, $b_\xi=c_\xi\delta$ where $c_\xi= \frac{c^\xi_4}{c^\xi_3}\sqrt\frac{c^\xi_2}{c^\xi_1}$. Since $B_\xi$ varies continuously with $\xi$, the Lyapunov function $V_\xi$ can be designed such that the constants $\{c^i_\xi\}_{i=1}^4$ vary continuously with $\xi$ in a sufficiently small neighborhood $\mathcal{U}$\footnote{See the proof of Theorem 4.14.}. Therefore, without loss of generality, we can conclude that $\mathcal{U}$ is such that there exists a uniform bound $c>0$ that satisfies:
\begin{equation}
    \forall\xi\in\mathcal{U},\quad \forall t\geq 0,\quad \|w_\xi(t)\|\leq c.
\end{equation} 
\end{proof} 

\subsubsection{Proof of Theorem~\ref{thm:pdesol}}
Equipped with convergence results on the flow and its derivatives, we are now ready to prove Theorem~\ref{thm:pdesol}. 

\begin{proofof}{Theorem~\ref{thm:pdesol}}
We revisit the proof of Theorem 2 in \cite{vannelli_maximal_1985} to show that $U:\roa\to\R$ defined by
\begin{equation}
    U:x\mapsto \int_{0}^\infty \gamma(\|\pi(t,x)\|) dt
\end{equation} with $\gamma:r\mapsto r^2$ is a maximal Lyapunov function for $\Sigma_{f,D}$. Firstly, note that $\gamma$ satisfies $\gamma,\gamma'\in \cdn(\R_{\geq 0})$ where both functions are positive definite and monotonically increasing. To inherit the conclusions of \cite{vannelli_maximal_1985}, we must prove that the convergence conditions in equation 19 and 26 in \cite{vannelli_maximal_1985} are satisfied with this choice of $\gamma$ under the assumptions of this theorem. 
By Assumption~\ref{ass:roa} and \ref{ass:hurwitz} and that $f\in \mathcal{C}^2$ close to the origin (Assumption~\ref{ass:cn}), Lemma~\ref{lem:flow} certifies that there exists a neighborhood $\mathcal{U}$ of the origin such that for all $x\in \mathcal{U}$, the map $t\mapsto \|\pi(t, x)\|$ is dominated (up to a constant scaling factor) by $t\mapsto \exp(-\alpha t)$ where $\alpha\triangleq\min_{\lambda\in\sigma(A)}|\Re\lambda|$. Moreover, by definition of asymptotic stability, 
\begin{equation}
    \forall x\in \roa\exists T(x)>0:\quad \forall t>T(x),\quad \pi(t,x)\in \mathcal{U}.
\end{equation}
This is sufficient to ensure that the condition in equation 19 of \cite{vannelli_maximal_1985} is satisfied:
\begin{equation}
    \begin{split}
            \forall x\in\roa,\quad\int_0^\infty \gamma(\|\pi(t,x)\|)dt=\int_0^{T(x)} +\int_{T(x)}^\infty\\\lesssim \int_0^{T(x)}\gamma(\|\pi(t,x)\|)dt +\int_{T(x)}^\infty \exp(-2\alpha t)dt<\infty.
    \end{split}
\end{equation} Additionally, by Assumption~\ref{ass:roa}--\ref{ass:hurwitz} and Lemma~\ref{lem:dx-roa}, we can certify the condition in equation 26 in \cite{vannelli_maximal_1985}:
\begin{equation}\label{eq:gV-decay}
    \begin{split}
            \forall x\in\roa,\quad \|D_xU(x)\|\leq \int_0^\infty \|D_x(\gamma(\|\pi\|))(t,x)\|dt\\=\int_0^\infty \gamma'(\|\pi(t,x)\|)\|D_x\pi(t,x)\|dt\\\lesssim\int_0^{T(x)} \gamma'(\|\pi(t,x)\|)\|D_x\pi(t,x)\|dt\\+\int_{T(x)}^\infty \exp(-\alpha t)\|D_x\pi(t,x)\|dt<\infty.
    \end{split}
\end{equation}
Thus, the convergence conditions in the proof of Theorem 2 in \cite{vannelli_maximal_1985} are verified, and the remaining steps of the proof follow identically. We conclude that $U$ is a maximal Lyapunov function. By Lemma~\ref{lem:V1}, the function  $V\triangleq \tanh(U)$ is then a scaled maximal Lyapunov function. 

Let us now turn to the properties \eqref{eq:zgV}--\eqref{eq:vpde}. First, observe that applying Cauchy--Schwarz inequality to property \eqref{eq:smlpv-dis} in Lemma~\ref{lem:V1} gives
    \begin{equation}
        \begin{split}
            \forall x\in\roa\backslash\{0\},\quad 0<| \gV(x)^\top f(x)|\leq \|\gV(x)\|\|f(x)\|. \\ 
        \end{split}
    \end{equation} Hence, we can conclude that 
    \begin{equation}
        \forall x\in\roa,\quad\gV(x)=0\Leftrightarrow x=0.
    \end{equation}

Secondly, to evaluate $\lim\limits_{x\to 0}\frac{\|f(x)\|}{\|\nabla V(x)\|}$, by Assumption~\ref{ass:roa} and $f\in\cn{\mathcal{N}}$ (Assumption~\ref{ass:cn}) and Theorem 5.1 in \cite{Coleman2012} in we have
\begin{equation}
    f(x) = Df(0)x+o(\|x\|)
\end{equation} close to the origin. 
Furthermore, $\gV$ in the denominator satisfies 
\begin{equation}
    \nabla V=\nabla (\tanh(U))=(1-V^2)\nabla U.
\end{equation} To compare convergence rates as $x\to0$, let us do a first-order Taylor expansion of $\nabla U$ at $x=0$. We first ensure that the second derivative $D^2U$ exists at $x=0$. Firstly, we showed in \eqref{eq:gV-decay} that the map $t\mapsto D_x(\gamma(\|\pi\|))(t, x)$ is integrable for each $x\in \roa$. Moreover, the second derivative $D_x^2 (\gamma(\|\pi\|))$ is given by:
\begin{equation}
    \begin{split}
         \forall t\geq 0,\quad \forall x\in \mathcal{N},\\D_x^2 (\gamma(\|\pi\|))(t,x)
        =2D_x\pi(t,x)^\top D_x\pi(t,x)\\+2\pi(t,x)^\top D^2_x\pi(t,x),
    \end{split}
\end{equation}
which exists and is continuous as $\pi\in\mathcal{C}^2(\R_{\geq 0}\times \mathcal{N})$ by Assumption~\ref{ass:cn} and Corollary 6.1 in \cite{Sideris2013}. Finally, by Assumption~\ref{ass:roa} and \ref{ass:hurwitz} and that $f\in \mathcal{C}^2$ close to the origin (Assumption~\ref{ass:cn}), Lemma~\ref{lem:flow} certifies that $\|\pi\|$, $\|D_x\pi\|$ contract exponentially with rate $\alpha$ as $t\to\infty$ close to the origin, and by Assumption~\ref{ass:roa}--\ref{ass:hurwitz} and Lemma~\ref{lem:dx2-bound}, $\|D_x^2\pi\|$ is uniformly bounded for small $x$. Hence, there exists a neighborhood of the origin $\mathcal{U}\subset\mathcal{N}$ where second derivative $D_x^2 (\gamma(\|\pi\|))$ is uniformly dominated by the integrable function  $\Theta:t\mapsto \exp(-\alpha t)$:
    \begin{equation}\label{eq:thetadom2}
        \forall t\geq 0, \quad \forall x\in \mathcal{U},\quad \|D^2_x (\gamma(\|\pi\|)(t,x)\|\lesssim \Theta(t).
    \end{equation} Therefore, we conclude by Theorem 2.27 in \cite{alma9934968102456} that $D^2U$ exists in $\mathcal{U}$:
    \begin{equation}
        \forall x\in \mathcal{U},\quad D^2U(x)=\int\limits_0^\infty D_x^2 (\gamma(\|\pi\|))(t,x)dt.
    \end{equation}
Hence, by Theorem 5.1 in \cite{Coleman2012} the Taylor expansion
\begin{equation}
    DU(x) = DU(0)+D^2 U(0)x + o(\|x\|)
\end{equation} is permissible for sufficiently small $x\in\mathcal{U}$.
To evaluate the expression, first note that
\begin{equation}
    DU(0)=\int\limits_0^\infty 2\pi(t,0)^\top D_x\pi(t,0)dt=0
\end{equation} by property of the equilibrium point at $x=0$. Further, 
\begin{equation}\label{eq:d2u0}
    \begin{split}
        &D^2U(0)=\int\limits_0^\infty  D_x^2 (\gamma(\|\pi\|))(t,0)dt\\
        &\quad=\int\limits_0^\infty 2D_x\pi(t,0)^\top D_x\pi(t,0)+2\pi(t,0)^\top D^2_x\pi(t,0)dt\\&=\int\limits_0^\infty\exp{(\tilde{A}^\top t)}\exp{(\tilde {A} t)}dt
    \end{split}
\end{equation}
where $\tilde{A}\in \R^{n\times n}$ is similar to $A$ by Lemma~\ref{lem:flow}. Since $\tilde{A}$ is Hurwitz and the integrand is positive, \eqref{eq:d2u0} converges to a matrix $D^2U(0)$ such that
\begin{equation}
    0<\inf_{\|x\|=1}\|D^2 U(0)x\|<\sup_{\|x\|=1}\|D^2 U(0)x\|<\infty.
\end{equation} 
Therefore, using the Taylor expansion of $f$ and $DU$ at $x=0$, we can conclude that the limit is bounded: 
 \begin{equation}
 \begin{split}
     \lim\limits_{x\to 0}\frac{\|f(x)\|}{\|\nabla V(x)\|}&=\lim\limits_{x\to 0}\frac{\|f(x)\|}{|1-V(x)^2|\|\nabla U(x)\|}\\
     &= \lim\limits_{x\to 0}\frac{\|f(x)\|}{\|\nabla U(x)\|}\\
     &= \lim\limits_{x\to 0}\frac{\|Df(0)x+o(\|x\|)\|}{\|D^2U(0)x + o(\|x\|)\|}<\infty.
 \end{split}
\end{equation}

Finally, consider $\omega:\roa\to\R$ defined by
    \begin{equation}
    \omega:x\mapsto
    \begin{cases}
        \frac{\gamma(\|x\|)}{\|\gV(x)\|}& \text{if } \gV(x)\neq 0,\\
        0&\text{else},
    \end{cases}
\end{equation} which is positive definite since $\gamma(\|\cdot\|)$ and $\|\gV\|$ are positive definite on $\roa$. With regard to regularity, while $\gamma(\|\cdot\|)$ and $\|\nabla V\|$ are both continuous on $\roa$, we must prove that $\omega$ is not singular at $x=0$. Thus, to analyse behavior as $x\to0$:
    \begin{equation}
        \begin{split}
           x\in\roa\backslash\{0\},\quad  |-\gamma(\|x\|)|=|\frac{d}{dt}U(x)|=|\nabla U(x)^\top f(x)|\\
            \leq \|\nabla U(x)\|\|f(x)\|\\
            \implies \omega(x)=\frac{\gamma(\|x\|)}{\|\nabla V(x)\|}=\frac{|-\gamma(\|x\|)|}{|1-V(x)^2|\|\nabla U(x)\|}\\
                 \leq \frac{\|\nabla U(x)\|\|f(x)\|}{|1-V(x)^2|\|\nabla U(x)\|}=\frac{f(x)}{|1-V(x)^2|}.
        \end{split}
    \end{equation} Since $f(x)\to 0$ and $V(x)\to 0$ as $x\to 0$, this implies that 
    $\omega(x)\to 0$ as $x\to 0$. Therefore, we conclude that $\omega\in\cn{\roa}$. With this choice of function $\omega$, the PDE
    \begin{equation}
        \forall x\in\roa,\quad \gV(x)^\top f(x)=(1-V(x)^2)\left(-\|\gV(x)\|\omega(x)\right)
    \end{equation}
    is well-defined and solved by $V$ as for every $x\in\roa$,
    \begin{equation}
        \begin{split}
            \gV(x)^\top f(x)=\frac{d}{dt}V(x)
    =\frac{d}{dt}\tanh\Big(\int_0^\infty \gamma(\|\pi(t,x)\|) dt\Big)\\
        = (1-V(x)^2)\frac{d}{dt}\int_0^\infty\gamma(\|\pi(t,x)\|)dt\\
        = (1-V(x)^2)\frac{d}{dt}\int_0^\infty\|\nabla V(\pi(t, x))\|\omega(\pi(t, x))dt\\
        =(1-V(x)^2)(-\|\gV(x)\|\omega(x)).
        \end{split}
    \end{equation}
\end{proofof}

\subsection{Proof of Proposition~\ref{prop:fhat-lip}}\label{sec:app:pf-fhat-lip}
In this section, we give the full proof of Proposition~\ref{prop:fhat-lip}.
\begin{proofof}{Proposition~\ref{prop:fhat-lip}}
    We can apply Lemma~\ref{lem:fhat-cont} by identifying 
    \begin{equation}
        \hat{\Psi}:(x,z)\mapsto-\npdapprox(x)(1-z^2).
   \end{equation} to conclude that $\fhat$ is in $\cn{D,\Rn}$. It remains to prove the Lipschitz condition locally.
   
   Let $z\in D\backslash\{0\}$ and let $\mathcal{N}_z\subset D$ be a neighborhood of $z$. Without loss of generality, we can assume $0\notin \mathcal{N}_z$ and that $\fnomapprox$, $\Vapprox$, $\Wapprox$, and $\npdapprox$ are Lipschitz on $\mathcal{N}_z$. Let $x$ and $y$ be any pair of points contained in $\mathcal{N}_z$. 
    We get
    \begin{equation}\label{eq:model-lip-all-terms}
        \begin{split}
            \|\fhat(x)-\fhat(y)\|&\leq \|\fnom(x)-\fnomapprox(y)\|\\&\quad+\|\Wnapprox(x)\Wnapprox(x)^\top\fnomapprox(x)-\Wnapprox(y)\Wnapprox(y)^\top\fnomapprox(y)\|\\&\qquad+\|\Wnapprox(x)\hat{\Psi}_{\Vapprox}(x)-\Wnapprox(y)\hat{\Psi}_{\Vapprox}(y)\|.
        \end{split}
    \end{equation} where $\hat{\Psi}_{\Vapprox}:x\mapsto \hat{\Psi}(x,\Vapprox(x))$ as before. The first term satisfies
    \begin{equation}\label{eq:model-lip-first-term}
        \|\fnom(x)-\fnomapprox(y)\|\leq L_{\fnomapprox}\|x-y\|
    \end{equation} where $L_{\fnomapprox}$ is the Lipschitz constant of $\fnomapprox$ on $\mathcal{N}_z$. With regard to the second term, we get:
    \begin{equation}\label{eq:model-lip-second-term}
        \begin{split}
           \|\Wnapprox(x)\Wnapprox(x)^\top\fnomapprox(x)-\Wnapprox(y)\Wnapprox(y)^\top\fnomapprox(y)\|\\\leq \|\Wnapprox(x)\Wnapprox(x)^\top\fnomapprox(x)-\Wnapprox(x)\Wnapprox(x)^\top\fnomapprox(y)\|\\+\|\Wnapprox(x)\Wnapprox(x)^\top\fnomapprox(y)-\Wnapprox(y)\Wnapprox(y)^\top\fnomapprox(y)\|\\
           \leq \|\Wnapprox(x)\||\Wnapprox(x)^\top(\fnomapprox(x)-\fnomapprox(y))|\\+\|\Wnapprox(x)\||(\Wnapprox(x)-\Wnapprox(y))^\top\fnomapprox(y)\|+\|\Wnapprox(x)-\Wnapprox(y)\||\Wnapprox(y)^\top\fnomapprox(y)|\\
           \leq \|\fnomapprox(x)-\fnomapprox(y)\|+2\|\Wnapprox(x)-\Wnapprox(y)\|\|\fnomapprox(y)\|.
        \end{split}
    \end{equation}
    It can be shown that 
    \begin{equation}\label{eq:model-lip-second-term-second-term}
        \|\fnomapprox(y)\|\|\Wnapprox(x)-\Wnapprox(y)\|\leq 2\frac{\|\fnomapprox(y)\|}{\|\Wapprox(y)\|}\|\Wapprox(x)-\Wapprox(y)\|
    \end{equation} (cf. equation \eqref{eq:wnlpz} in the proof of Theorem~\ref{thm:universal}). By continuity of $\fnomapprox$ and $\Wapprox$ on the bounded set $\mathcal{N}_z$ together with the condition \eqref{eq:boundedlimit-model}, there exists a positive constant $R<\infty$ such that
    \begin{equation}\label{eq:model-lip-second-term-second-term-bound}
        \forall w\in\mathcal{N}_z,\quad \frac{\|\fnomapprox(w)\|}{\|\Wapprox(w)\|}< R.
    \end{equation} Moreover, we have
    \begin{equation}\label{eq:model-lip-second-term-wapprox}
        \|\Wapprox(x)-\Wapprox(y)\|\leq L_{\Wapprox}\|x-y\|
    \end{equation} where $L_{\Wapprox}$ is the Lipschitz constant of $\Wapprox$ on $\mathcal{N}_z$.
    By combining \eqref{eq:model-lip-second-term} with \eqref{eq:model-lip-first-term} and \eqref{eq:model-lip-second-term-second-term}--\eqref{eq:model-lip-second-term-wapprox}, we get
    \begin{equation}\label{eq:model-lip-second-term-final}
        \|\Wnapprox(x)\Wnapprox(x)^\top\fnomapprox(x)-\Wnapprox(y)\Wnapprox(y)^\top\fnomapprox(y)\|\leq (L_{\fnomapprox}+4RL_{\Wapprox})\|x-y\|.
    \end{equation}
    The third term satisfies:
    \begin{equation}\label{eq:model-lip-third-term}
        \begin{split}
            \|\Wnapprox(x)\hat{\Psi}_{\Vapprox}(x)-\Wnapprox(y)\hat{\Psi}_{\Vapprox}(y)\|\leq \|\Wnapprox(x)\||\hat{\Psi}_{\Vapprox}(x)-\hat{\Psi}_{\Vapprox}(y))|\\+\|\hat{\Psi}_{\Vapprox}(y)\|\|\Wnapprox(x)-\Wnapprox(y)\|
        \end{split}
    \end{equation} Note that
    \begin{equation}\label{eq:model-lip-third-term-first-term}
        \begin{split}
            |\hat{\Psi}_{\Vapprox}(x)-\hat{\Psi}_{\Vapprox}(y)|=|\npdapprox(x)(1-\Vapprox(x)^2)-\npdapprox(y)(1-\Vapprox(y)^2)|\\
            \leq |\npdapprox(x)-\npdapprox(y)||1-\Vapprox(x)^2|+|\npdapprox(y)||\Vapprox(x)^2-\Vapprox(y)^2|\\
            \leq (L_{\npdapprox} M_{1-\Vapprox^2}+2M_{\npdapprox}M_{\Vapprox}L_{\Vapprox})\|x-y\|
        \end{split}
    \end{equation} where $L_{\npdapprox}$ and $L_{\Vapprox}$ are the Lipschitz constants of $\npdapprox$ and $\Vapprox$ on $\mathcal{N}_z$, and $M_{1-\Vapprox^2}$, $M_{\npdapprox}$, and $M_{\Vapprox}$ are the maximum absolute value on $\mathcal{N}_z$ of the indexed functions respectively.
    Moreover, using \eqref{eq:boundedlimit-model-npd} we can conclude that for every  there exists a positive constant $Q<\infty$ such that
    \begin{equation}
        \forall w\in\mathcal{N}_z,\quad \frac{|\npdapprox(w)|}{\|\Wapprox(w)\|}< Q,
    \end{equation} and thus
    \begin{equation}\label{eq:model-lip-third-term-second-term}
        \begin{split}
            \|\hat{\Psi}_{\Vapprox}(y)\|\|\Wnapprox(x)-\Wnapprox(y)\|\leq 2M_{1-\Vapprox^2}\frac{|\npdapprox(y)|}{\|\Wapprox(y)\|}\|\Wapprox(x)-\Wapprox(y)\|\\
            \leq 2M_{1-\Vapprox^2}QL_{\Wapprox}\|x-y\|.
        \end{split}
    \end{equation}
    Putting together \eqref{eq:model-lip-third-term} with \eqref{eq:model-lip-third-term-first-term} and \eqref{eq:model-lip-third-term-second-term}, we get
    \begin{equation}\label{eq:model-lip-third-term-final}
        \begin{split}
            \|\Wnapprox(x)\hat{\Psi}_{\Vapprox}(x)-\Wnapprox(y)\hat{\Psi}_{\Vapprox}(y)\|\\\leq (L_{\npdapprox} M_{1-\Vapprox^2}+2M_{\npdapprox}M_{\Vapprox}L_{\Vapprox} + 2M_{1-\Vapprox^2}QL_{\Wapprox})\|x-y\|.
        \end{split}
    \end{equation}
    Finally, combining \eqref{eq:model-lip-all-terms} with \eqref{eq:model-lip-first-term}, \eqref{eq:model-lip-second-term-final}, and \eqref{eq:model-lip-third-term-final}, we get
    \begin{equation}
        \begin{split}
            \forall x,y\in\mathcal{N}_z,\quad  \|\fhat(x)-\fhat(y)\|\leq L_{\fhat}\|x-y\|,
        \end{split}
    \end{equation} where 
    \begin{equation}\label{eq:model-lip-constant}
        \begin{split}
            L_{\fhat}\triangleq 2L_{\fnomapprox}+4RL_{\Wapprox}+L_{\npdapprox} M_{1-\Vapprox^2}+2M_{\npdapprox}M_{\Vapprox}L_{\Vapprox} \\+ 2M_{1-\Vapprox^2}QL_{\Wapprox}
        \end{split}
    \end{equation} which certifies a local Lipschitz condition at $z\in D\backslash\{0\}$. 
    
    Now, consider the case where $z=0$ with a neighborhood $\mathcal{N}_0$. Suppose first that $x$ and $y$ are both contained in $\mathcal{N}_0\backslash\{0\}$. We will use points in this set to construct a finite cover of $\mathcal{N}_0$. For each $z\in\mathcal{N}_0\backslash\{0\}$, consider $\mathcal{N}_z$ defined as above. Without loss of generality, assume $\mathcal{N}_z=B_{r(z)}(z)$ for some $r(z)>0$. Consider the closure $\bar{\mathcal{N}}_0$ and pick the open cover $\bigcup\limits_{z\in \mathcal{N}_0\backslash\{0\}} \mathcal{N}_z$. By compactness of $\bar{\mathcal{N}}_0$, there exists a finite subcover $\bigcup\limits_{z\in \mathcal{Z}} \mathcal{N}_z$ where $\mathcal{Z}\subset \mathcal{N}_0\backslash\{0\}$ and $|\mathcal{Z}|<\infty$. Let $L_{\fhat}(z)$ indicate the Lipschitz constant \eqref{eq:model-lip-constant} for each $z\in\mathcal{N}_0\backslash\{0\}$. Define $L_{\fhat}'(0)\triangleq\max\limits_{z\in\mathcal{Z}} L_{\fhat}(z)$, observing that $L_{\fhat}(z)$ remains bounded as $z\to0$ by continuity of the generating functions and condition \eqref{eq:boundedlimit-model} and \eqref{eq:boundedlimit-model-npd}. If there exists $z'\in \mathcal{Z}$ such that $x,y\in\mathcal{N}_{z'}$, then
    \begin{equation}
        \|\fhat(x)-\fhat(y)\|\leq L_{\fhat}(z')\|x-y\|\leq  L_{\fhat}'(0)\|x-y\|.
    \end{equation} Otherwise, there exists $z''\in\mathcal{Z}$ such that $x\in\mathcal{N}_{z''}$ while $y\notin\mathcal{N}_{z''}$. In particular, it holds that
    \begin{equation}
        \|x-y\|\geq |\|x-{z''}\|+\|y-{z''}\||\geq r(z'')\geq r_{min}
    \end{equation} where $r_{min}\triangleq\min_{z\in\mathcal{Z}}r(z)$. With $M_{\fhat}$ being the maximum norm of $\fhat$ in $\mathcal{N}_0$, which is finite by continuity, we get: 
    \begin{equation}
        \begin{split}
            \|\fhat(x)-\fhat(y)\|\leq 2M_{\fhat}\leq\frac{2M_{\fhat}}{r_{min}}\|x-y\|
            \\
            \implies \|\fhat(x)-\fhat(y)\|\leq L_{\fhat}''(0)\|x-y\|
        \end{split}
    \end{equation} where $L_{\fhat}''(0)\triangleq \frac{2M_{\fhat}}{r_{min}}$. 
    
    It remains to prove a Lipschitz condition for the case where $x\in \mathcal{N}_0$ and $y=0$. Observe that $\fhat(0)=0$, and by $\fnomapprox(0)=0$ and $\npdapprox(0)=0$ we get
    \begin{equation}
        \begin{split}
            \forall x\in\mathcal{N}_0,\quad \|\fhat(x)-\fhat(0)\|
            \leq \|\fnomapprox(x)-\Wnapprox(x)\Wnapprox(x)^\top \fnomapprox(x)\|\\+\|\Wnapprox(x)\||\npdapprox(x)(1-\Vapprox(x)^2)|\\\leq \|\fnomapprox(x)\|+|\omega(x)|M_{1-\Vapprox^2}\leq (L_{\fnomapprox}+L_{\npdapprox} M_{1-\Vapprox^2})\|x\|,
        \end{split}
    \end{equation} where $L_{\fnomapprox}$ and $L_{\npdapprox}$ are the Lipschitz constants of $\fnomapprox$ and $\npdapprox$ in $\mathcal{N}_0$ and $M_{1-\Vapprox^2}$ is the maximum absolute value of the indexed function on this set. 
    Thus,
    \begin{equation}
        \begin{split}
            \forall x\in\mathcal{N}_0,\quad \|\fhat(x)-\fhat(0)\|\leq L_{\fhat}'''(0)\|x\|
        \end{split}
    \end{equation} where $L_{\fhat}'''(0)\triangleq (L_{\fnomapprox}+L_{\npdapprox} M_{1-\Vapprox^2})$.
    Picking the maximum constant among $L_{\fhat}'(0)$, $L_{\fhat}''(0)$, and $L_{\fhat}'''(0)$ certifies a local Lipschitz condition at $z=0$.
\end{proofof}
\subsection{Proof of Proposition~\ref{prop:roaapprox}}\label{app:pf-roaapprox}
In this section, we give the full proof of Proposition~\ref{prop:roaapprox}.

\begin{proofof}{Proposition~\ref{prop:roaapprox}}
    We first show that for every $\varepsilon>0$ there exists $\eta(\varepsilon)>0$ such that 
    \begin{equation}
        \|\fhat-f\|_{\cn{\Lambda}}<\eta(\varepsilon)\implies S_{1-\varepsilon}(\Vapprox)\subset \roa=S_1(V)^\circ.
    \end{equation}
    Let $\delta>0$ be small enough that $B_\delta\subset \roa$. Consider $S_c(\Vapprox)$ for $0<c<1$, which is bounded by assumption. We are interested in the case when $B_\delta\subsetneq S_c(\Vapprox)$\footnote{Otherwise, either $S_c(\Vapprox)\subset B_\delta\subset\roa$ or we may replace $\delta$ with $0<\delta'<\delta$ such that $B_{\delta'}\subset B_\delta\cap S_c(\Vapprox)$ and continue with the analysis}. 
    The goal is to show that every trajectory of $\Sigma_{f,\Lambda}$ starting in $S_c(\Vapprox)$ converges to $B_\delta$ in finite time whenever $\fhat$ approximates $f$ with sufficient precision in $\cn{\Lambda}$.
    
    Define the compact set $\Omega_c\triangleq S_c(\Vapprox)\backslash B_\delta$. We get for every $x\in \Omega_c$:
    \begin{equation}\label{eq:vhatf}
        \begin{split}
            \gVapprox(x)^\top f(x) &= \gVapprox(x)^\top (f(x)-\fhat(x))+\gVapprox(x)^\top\fhat(x)\\
            &= \gVapprox(x)^\top(f(x)-\fhat(x))\\&\quad-\|\gVapprox(x)\|\npdapprox(x)(1-\Vapprox(x)^2)\\
            &\leq\|\gVapprox(x)\|\left(\|f(x)-\fhat(x)\|-\npdapprox(x)(1-\Vapprox(x)^2)\right)\\
        \end{split}
    \end{equation}
    Note that $\|\gVapprox\|$ and $\npdapprox$ are positive definite functions in $\cn{S_c(\Vapprox)}$ by assumption. Let $m_c\triangleq\min\limits_{x\in \Omega_c}\npdapprox(x)$, and observe that $m_c>0$. If $\fhat$ is chosen such that $\|f-\fhat\|_{\cn{\Lambda}}<m_c(1-c^2)$, then
    \begin{equation}\label{eq:vdotneg}
        \forall x\in \Omega_c,\quad \gVapprox(x)^\top f(x)<0.
    \end{equation}

    Suppose by way of contradiction that there exists $z\in \Omega_c$ such that \begin{equation}
        \forall t\geq 0,\quad\pi(t,z)\in \Omega_c.
    \end{equation} Let $\kappa\triangleq\min\limits_{x\in\Omega_c}|\gVapprox(x)^\top f(x)|$ and observe that $\kappa>0$. 
    We get 
    \begin{equation}\label{eq:vineq}
        \begin{split}
            \forall t\geq 0,\quad \Vapprox(\pi(t,z))&=\Vapprox(z)+\int_0^t\frac{d}{d\tau}\Vapprox(\pi(\tau,z))d\tau\\&\leq c-\kappa t.
        \end{split}
    \end{equation} From \eqref{eq:vineq}, we get $\Vapprox(\pi(t,z))\leq 0$ for all $t\geq \frac{c}{\kappa}$. Since $\Vapprox$ is positive definite and $\Omega_c$ excludes a neighborhood of the origin, this results in a contradiction. Hence, we conclude that for every $x\in \Omega_c$ there exists $T(x)<\infty$ such that
    \begin{equation}\label{eq:pi-notin-omegac}
    \pi(T(x),x)\notin \Omega_c.
    \end{equation} 
    
    A consequence of \eqref{eq:pi-notin-omegac} is that the flow either leaves $S_c(\Vapprox)$ through $\partial S_c(\Vapprox)$, or enters $B_\delta$ in finite time. By \eqref{eq:vdotneg} the flow is reflected into $S_c(\Vapprox)$ at $\partial S_c(\Vapprox)$, and therefore the flow must enter $B_\delta$. Thus, we have shown that every trajectory starting in $S_c(\Vapprox)$ converges to $B_\delta$ in finite time, and since $B_\delta\subset\roa$ we can conclude that $S_c(\Vapprox)\subset\roa$.
    Rewriting $c=1-\varepsilon$, we have shown that for any $0<\varepsilon<1$ there exists $\eta(\varepsilon)=m_{1-\varepsilon}(1-(1-\varepsilon)^2)$ such that
    \begin{equation}\label{eq:incl}
        \|f-\fhat\|_{\cn{\Lambda}}<\eta(\varepsilon)\implies S_{1-\varepsilon}(\Vapprox)\subset\roa=S_1(V)^\circ.
    \end{equation}
    
Next, we claim that \eqref{eq:incl} holds when the roles of the true system and model system are interchanged. To justify this, recall that by Theorem~\ref{thm:pdesol} there exists a positive definite function $\omega\in \cn{S_1(V)^\circ, \R}$ that satisfies \eqref{eq:vpde}. Moreover, $V\in \cd{S_1(V)^\circ}$ is a positive definite function which satisfies \eqref{eq:zgV}. Importantly, we can conclude that $\omega$ and $\|\gV\|$ are positive definite functions in $\cn{S_c(V)}$. The proof now follows analogously to the previous case. We can conclude that for every $0<\varepsilon<1$ there exists $\eta'(\varepsilon)>0$ such that
\begin{equation}\label{eq:outer}
    \|\fhat-f\|_{\cn{\Lambda}}<\eta'(\varepsilon)\implies S_{1-\varepsilon}(V)\subset\roaapprox= S_1(\Vapprox)^\circ.   
\end{equation}

We now claim that 
\begin{equation}
    \lambda_{n} (S_1(V)^\circ\backslash S_1(\Vapprox)^\circ)\to 0\quad\text{as}\quad \|f-\fhat\|_{\cn{\Lambda}}\to 0.
\end{equation}
First note that
\begin{equation}
    S_1(W)^\circ=\bigcup\limits_{0<\delta\leq1} S_{1-\delta}(W)
\end{equation}
for any positive definite function $W$. Whenever $\|f-\fhat\|_{\cn{\Lambda}}<\eta'(\varepsilon)$, by \eqref{eq:outer} we get:
\begin{equation}\label{eq:subsetAe}
    \begin{split}
        S_1(V)^\circ\backslash S_1(\Vapprox)^\circ 
    &= \bigcup\limits_{0<\delta\leq1} S_{1-\delta}(V)\backslash S_1(\Vapprox)^\circ,\\
    &\subset\{x\mid 1-\varepsilon<V(x)< 1\}.
    \end{split}
\end{equation}
Let $V_A:A(\varepsilon)\to\R$ be the restriction of $V$ to the open set $A(\varepsilon)\triangleq \{x\mid 1-\varepsilon<V(x)< 1\}$. Note that $A(\varepsilon)$ is bounded, since $\roa=S_1(V)^\circ$ is bounded by assumption.  By Theorem 3.2.11 in \cite{federer_rectifiability_1996} (coarea formula),
\begin{equation}
    \int\limits_{A(\varepsilon)}\|\nabla V_A(x)\|dx=\int\limits_{-\infty}^\infty \lambda_{n-1}(V_A^{-1}(t))dt.
\end{equation} 
By definition of $A(\varepsilon)$, we have
\begin{equation}
    \int\limits_{-\infty}^\infty \lambda_{n-1}(V_A^{-1}(t))dt=\int\limits_{1-\varepsilon}^{1} \lambda_{n-1}(V_A^{-1}(t))dt.
\end{equation} 
By assumption \eqref{eq:gV-lb}, we may continuously extend $\gV$ to $S_1(V)$ such that it admits non-zero values on $\partial S_1(V)$. Thus, there exists $0<\varepsilon_0<1$ and $m(\varepsilon_0)>0$ such that
\begin{equation}\label{eq:Aelb}
    \forall\varepsilon\leq \varepsilon_0,\quad\forall x\in A(\varepsilon),\quad \|\gV(x)\|\geq m(\varepsilon_0).
\end{equation}
Furthermore, there exists $ M(\varepsilon_0)\geq 0:$\begin{equation}\forall\varepsilon\leq \varepsilon_0,\quad\forall t\in (1-\varepsilon,1),\quad    \lambda_{n-1}(V_A^{-1}(t))\leq M(\varepsilon_0).\end{equation} 
Thus,
\begin{equation}
    \forall \varepsilon\leq \varepsilon_0,\quad \lambda_{n}(A(\varepsilon))\cdot m(\varepsilon_0)\leq \int\limits_{1-\varepsilon}^{1} \lambda_{n-1}(V_A^{-1}(t))dt\leq \varepsilon M(\varepsilon_0),
\end{equation} which gives 
\begin{equation}
    \lambda_{n}(A(\varepsilon))\leq \varepsilon\frac{ M(\varepsilon_0)}{m(\varepsilon_0)}\to 0\,\text{as}\,\varepsilon\to 0.
\end{equation}
Together with \eqref{eq:subsetAe}, this implies that
 \begin{equation}
    \begin{split}
             \lambda_{n} (S_1(V)^\circ\backslash S_1(\Vapprox)^\circ)<\lambda_{n}(A(\varepsilon))\to 0\quad 
             \text{as}\, \|f-\fhat\|_{\cn{\Lambda}}\to 0&.
    \end{split}
 \end{equation}
 The claim
 \begin{equation}
    \lambda_{n} (S_1(\Vapprox)^\circ\backslash S_1(V)^\circ)\to 0\quad\text{as}\quad \|f-\fhat\|_{\cn{\Lambda}}\to 0
\end{equation} can be shown by interchanging the roles of true system and model system.
\end{proofof} 

\section*{Experimental details}\label{sec:app:exp-details}
\subsection*{Data generation}
In this section, we describe how the datasets are generated. We generate the training and validation datasets independently using the identical protocol.
\paragraph*{Trajectory generation}
We generate trajectories with $\texttt{odeint}$ from $\texttt{torchdiffeq}$ package using $\texttt{dopri5}$ with default parameters. 

\paragraph*{Van der Pol} We sample $480$ initial points uniformly in $[-3,3]^2$ and $[-3,3]\times [-5.5, 5.5]$ respectively for the cases $\mu=1$ and $\mu=3$. Trajectories are integrated for $t\in [0,5]$ with frequency $1000$. Diverging trajectories are truncated to $x\in [-6, 6]^{2}$. A point is determined to belong to the ROA if its ensuing trajectory terminates within a ball with radius $0.7$ centered at the equilibrium at time $t=10$.
\paragraph*{Two-machine power system} We sample $480$ initial points uniformly in $[-1.5,4.5]\times [-4.5, 1.5]$, and trajectories are integrated for $t\in [0,5]$ with frequency $1000$. Diverging trajectories are truncated to $x\in [-5, 5]^{2}$. A point is determined to belong to the ROA if its ensuing trajectory terminates within a ball with radius $0.5$ centered at the equilibrium at time $t=100$.

\subsection*{Nominal model}
\paragraph*{Model architecture} The nominal model consists of a fully connected deep neural network with two hidden layers with $100$ neurons each and SiLU activation function. 
\paragraph*{Training loss}
Let $\{x_i\}_{i=1}^N$ denote the sampled starting points with associated trajectories $\{\pi(t_k,x_i)\}_{k=1}^K$ where $\{t_k\}_{k=1}^K$ is a uniform discretization of the time interval $[0, T]$ for some $T>0$.
The trajectory reconstruction loss \eqref{eq:Ldnom} is implemented in minibatch as
\begin{equation}
    \mathcal{L}_{d}^{n,b}(\varphi)=\frac{1}{M} \sum_{j=1}^{M}\frac{1}{L}\sum_{l=1}^L \|\pi(t_l, z_j)-\hat{\pi}(t_l, z_j; \varphi)\|
\end{equation} where $M$ is the minibatch size and $z_j\sim\mathrm{Unif}(\{\pi(t_k; x_j)\}_{k=1}^{K-L})$ and the batch time $0<L<<K$.
\paragraph*{Training configuration} Training occurs over 500 epochs with batch size $M=72$ and batch time $L=10$ and the learning rate decays from $10^{-3}$ to $10^{-6}$ with cosine schedule.
\subsection*{Main model}
\paragraph*{Model architecture} The ICNN model has two hidden layers with 256 neurons each and $\mathrm{ReHU}$ \eqref{eq:rehu} activation functions with $d=0.01$ and $c_1=0.001$. The pre-map $T$ consists of a fully connected deep neural network with two hidden layers with 48 neurons each and SiLU activation functions. The weight function $\npdapprox$ is modeled by a fully connected deep neural network with two hidden layers of 100 neurons each and $\mathrm{ReHU}$ activation functions with $d=0.01$ and $c_2=0.001$.  
\paragraph*{Training loss}
Similar to nominal model training, with sampled starting points $\{x_i\}_{i=1}^N$ and associated trajectories $\{\pi(t_k,x_i)\}_{k=1}^K$ where $\{t_k\}_{k=1}^K$ uniformly discretizes the time interval $[0, T]$, the trajectory reconstruction loss \eqref{eq:LdI} is implemented in minibatch as
\begin{equation}
    \mathcal{L}_{d}^{b}(\theta)=\frac{1}{M_d} \sum_{j=1}^{M_{d}}\frac{1}{L}\sum_{l=1}^L \|\pi(t_l, z_j)-\hat{\pi}(t_l, z_j; \theta)\|I_{d}(z_j; \Vapprox_\theta)
\end{equation} where $M_d$ is the minibatch size and $z_j\sim\mathrm{Unif}(\{\pi(t_k; x_j)\}_{k=1}^{K-L})$ and the batch time $0<L<<K$. The escape loss \eqref{eq:Les} is implemented in minibatch as 
\begin{equation}
    \mathcal{L}_{es}^{b}(\theta)= \sum_{j=1}^{M_{es}} (1 -\Vapprox_\theta(x_j^{es}))I_{es}(x_j^{es}; \Vapprox_\theta, \pi, T)
\end{equation} where $\{x^{es}_j\}_{j=1}^{M_{es}}$ with $M_{es}>0$ is the minibatch of starting points with possibly escaping trajectories. 
The entry loss \eqref{eq:Les} is implemented in minibatch as 
\begin{equation}
    \mathcal{L}_{en}^{b}(\theta)= \sum_{j=1}^{M_{en}} (\Vapprox_\theta(x_j^{en})-1)I_{en}(x_j^{en}; \Vapprox_\theta, \pi, T)
\end{equation} where $\{x^{en}_j\}_{j=1}^{M_{en}}$ with $M_{en}>0$ is the minibatch of starting points with possibly entering trajectories.
\paragraph*{Training of the Van Der Pol systems} We use a shared dataset of starting points $\{x_i\}_{i=1}^N$ ($N=480$) with associated trajectories for the trajectory reconstruction loss, escape loss, and entry loss. The trajectory reconstruction loss is formed with $M_d=72$ and $L=5$. The escape loss is formed with $M_{es}=480$ (that is, the full dataset) and the entry loss with $M_{en}=96$ points which are randomly resampled uniformly from the dataset at every epoch. Training occurs over 300 epochs and the learning rate decays from $5\cdot 10^{-4}$ to $10^{-6}$ with cosine schedule. The hyperparameters $D_{es}$ and $D_{en}$ increase from $0$ to $10^{-4}$ linearly for the first 30 epochs and thereafter decays to 0 with cosine schedule.
\paragraph*{Training of the two-machine power system} The training configuration follows that of the Van der Pol systems but using $M_d=192$ and $M_{en}=192$, and $D_{es}$ and $D_{en}$ increase from $0$ to $10^{-5}$ linearly for the first 30 epochs and thereafter decays to $10^{-6}$ with cosine schedule.
--------------------------------------------



\end{document}